\newcommand{\ass}{\quad\mbox{as}\quad}
\DeclareMathOperator{\sech}{sech}
\newcommand{\inn}{{\quad\hbox{in } }}
\newcommand{\ttt}{\tilde }
\newcommand{\pp}{ {\partial} }
\newcommand{\E}{\mathcal{E} }
\newcommand{\N}{\mathbb{N}}
\newcommand{\R} {\mathbb R}
\newcommand{\cuad}{{\sqcap\kern-.68em\sqcup}}
\newcommand{\foral}{\quad\mbox{for all}\quad}
\newcommand{\be}{\begin{equation}}
	\newcommand{\ee}{\end{equation}}
\newcommand{\la}{\lambda}
\newcommand{\equ}[1]{(\ref{#1})}
\newtheorem{lemma}{Lemma}[section]
\newtheorem{prop}{Proposition}[section]
\newtheorem{theorem}{Theorem}
\newtheorem{remark}{Remark}[section]
\newcommand{\bremark}{\begin{remark} \em}
	\newcommand{\eremark}{\end{remark} }
\numberwithin{equation}{section}
\begin{document}
	
	\title[Delaunay-like compact equilibria]{Delaunay-like compact equilibria in the liquid drop model }
	
	\author[M. del Pino]{Manuel del Pino}
	\address{\noindent M.dP.:  Department of Mathematical Sciences University of Bath,
		Bath BA2 7AY, United Kingdom.}
	\email{mdp59@bath.ac.uk}

	\author[M. Musso]{Monica Musso}
	\address{\noindent M.M.:  Department of Mathematical Sciences University of Bath,
		Bath BA2 7AY, United Kingdom.}
	\email{mm6823@bath.ac.uk}

	\author[A. Zuniga]{Andres Zuniga}
	\address{\noindent A.Z.: Instituto de Ciencias de la Ingenier\'ia, Universidad de O’Higgins (UOH), Avenida Libertador Bernardo O’Higgins 611, Rancagua, Chile.}
	\email{andres.zuniga@uoh.cl}
	
	\maketitle
	
	\begin{abstract}
		The {\em liquid drop model}  was introduced by Gamow in 1928 and Bohr-Wheeler in 1938 to model atomic nuclei. The model describes the competition between the surface tension,
		which keeps the nuclei together, and the Coulomb force, corresponding to repulsion among protons. More precisely, 
		the problem consists of finding a surface $\Sigma =\partial  \Omega$ in $\R^3$ that is critical for the  energy 
		$$
		\E(\Omega) = {\rm Per\,} (\Omega )  +  \frac 12 \int_\Omega\int_\Omega  \frac {dxdy}{|x-y|}
		$$
		under the volume constraint $|\Omega| = m$. The term ${\rm Per\,} (\Omega ) $ corresponds to the surface area of $\Sigma$.
		The associated Euler-Lagrange equation is
		$$
		H_\Sigma (x) + \int_{\Omega } \frac {dy}{|x-y|} = \lambda \quad \hbox{ for all } x\in \Sigma, \quad 
		$$
		where $H_\Sigma$ stands for the mean curvature of the surface,   and where $\lambda\in\R$ is the Lagrange multiplier associated to the constraint $|\Omega|=m$. Round spheres enclosing balls of volume $m$ are always solutions. They are minimizers for sufficiently small $m$. 
		Since the two terms in the energy compete, finding non-minimizing solutions can be challenging. We find a new class of compact, embedded solutions with large volumes, whose geometry resembles a ``pearl necklace" with an axis located on a large circle, with a shape close to a Delaunay's unduloid surface of constant mean curvature. 
		The existence of such equilibria is not at all obvious, since for the closely related constant mean curvature problem $H_\Sigma = \la$, the only compact embedded solutions are spheres, as stated by the classical Alexandrov result. 
		
	\end{abstract}

	\section{Introduction} 

	For open regions $\Omega \subset \mathbb{R}^3$, we consider the energy functional
	\begin{equation}\label{E}
		\E(\Omega) = \text{Per}(\Omega) + D(\Omega) , \quad D(\Omega) = \frac{1}{2} \int_\Omega \int_\Omega \frac{dx\,dy}{|x-y|},
	\end{equation}
	where $\text{Per}(\Omega)$ denotes the perimeter of $\Omega$, which in the smooth case corresponds to the surface area of its boundary $\partial\Omega$.
	This energy traces back to Gamow’s liquid drop model \cite{bohr,Gamow}, a theoretical model used in nuclear physics to describe the structure of atomic nuclei. It treats the nucleus as a drop of incompressible fluid represented by 
	$\Omega $, where nucleons (protons and neutrons) are bound together by strong nuclear forces, explaining phenomena like nuclear fission and nuclear binding energies.
	
	The nucleons are assumed to be distributed with constant density, implying that the number of nucleons is proportional to $|\Omega|$. The perimeter term in the energy \equ{E} corresponds to surface tension, which holds the nuclei together. The second term represents the electrostatic Coulomb repulsion among protons.
	
	The basic variational problem in search for equilibria of such system is as follows: Given $m > 0$, we aim to find regions $\Omega\subset\R^3$ with smooth boundaries that are critical for the energy $\E$ under the volume constraint $|\Omega| = m$. Initially formulated by Bohr and Wheeler \cite{Bohr Wheeler} to describe the mechanism of nuclear fission, the problem consists in identifying regions $\Omega$ that, for some constant Lagrange multiplier $\lambda$, solve the following equation:
	
	\begin{equation}
		H_{\partial\Omega}(x) + \int_\Omega \frac{dy}{|x-y|} = \lambda \quad \foral x\in \partial\Omega. \label{eq:equation}
	\end{equation}
	Here, $H_{\partial\Omega}(x)$ represents the mean curvature operator of the boundary $\partial\Omega$, and the integral term involves the interaction potential between the boundary point $x$ and points $y$ inside the region $\Omega$.
	
	The first observation is that balls with volume $m$ always constitute solutions to this problem. They are critical for both terms in the formula \eqref{E}. Due to the 
	classical isoperimetric inequality, balls minimize perimeter subject to the volume constraint $|\Omega| = m$; see~\cite{DeGiorgi} and also \cite[Theorem 14.1]{maggi}. On the other hand, the balls instead maximize the Coulomb interaction term, as shown in \cite{Riesz}, and also in the book \cite[Theorem 3.7]{Lieb-Loss}. The competing nature of the two terms in the energy \eqref{E} renders problem \eqref{eq:equation} delicate. This problem has been extensively treated in recent years, see \cite{choksi} for a detailed survey of results until 2017.

	Alexandrov’s classical theorem asserts that spheres are the only compact embedded surfaces $\Sigma$ that are critical for perimeter under the enclosed volume constraint, namely, with constant mean curvature, $H_{\Sigma}(x) = \lambda$ for all $x\in \Sigma$. In contrast, Problem \eqref{eq:equation} exhibits richer features.
	
	\medskip
	Let $\Omega $ be a region with $|\Omega|=m$. The scaling $ \Omega = m^{1\over 3} E$, so that $|E|=1$, shows that \equ{E} becomes
	$$
	{\mathcal E} (\Omega ) = m^{2\over 3} \left( {\rm Per} (E) + m D(E) \right) .
	$$
	In the small volume regime $m\sim0$, the above expression indicates that the perimeter term dominates the energy; while the Coulomb interaction energy is dominant in the large mass regime $m>>1$. This suggests that for any mass $m > 0$ sufficiently small, there is a global minimizer, while for large $m$, there are no global minimizers. Indeed, Knupfer-Muratov \cite{Knupfer-Muratov} proved that for any small $m>0$,  the energy \eqref{E} has the ball $B$ with $|B|=m$ as a minimizer, see also Julin \cite{julin} and  Bonacini-Cristoferi~\cite{Bonacini-Cristofri}. 
	Balls are no longer global minimizers for $m>m_*$, where 
	$$
	m_*= 5
	\frac{(2^{1/3} -1)} {
		1-2^{-2/3}}
	\approx  3.51 .
	$$
	The value of $m_*$ is precisely the one for which the energy of one ball of mass $m_*$ equals the energy of two balls of half such mass, located at an infinite distance: 
	$$
	\E\left( \left({m_* \over |B_1|}\right)^{1/3} \, B_1 \right) = 2\, \E \left( \left({m_* \over 2|B_1|}\right)^{1/3} \, B_1 \right).
	$$
	Choksi and Peletier \cite{Choksi Peletier}  conjectured that global minimizers exist if and only if $m\in(0,m_*]$ and, moreover, they are precisely balls. 
	Frank and Nam \cite{frank2} proved that for all $0<m\le m_*$ there exists a global minimizer. They also established that these minimizers are balls under the condition that for $m>m_*$  there is no minimizer. Frank-Killip-Nam \cite{frank1} proved that for $m>8$ there is no minimizer, improving a result by Lu-Otto \cite{Lu Otto}. The fractional scenario for minimizers was analyzed in \cite{figalli}, while the study of minimizers in the setting of density perimeter was carried out in \cite{zuniga}.  See also 
	\cite{frank3,RenWei0} for results on related models. 
	
	It is natural to ask whether for $m>m_*$ there exist solutions to Problem \eqref{eq:equation} other than balls. Very few solutions are known for larger $m$.
	It turns out that balls are local minimizers well beyond $m_*$: in fact, for any $0<m\le 10$, balls are linearly stable while stability is lost for $m>10$. At $m=10$ a local bifurcation branch in $m$ appears which, for $m>10$ yields a local minimizer that is not a ball and, for $m<m_*$, renders a saddle point. The presence of this bifurcation was first detected by Bohr and Wheeler \cite{Bohr Wheeler} and rigorously established by Frank \cite{frank4}. As far as the authors are aware, the only known solutions to Problem  \equ{eq:equation}, other than balls for very large $m$, are the ones constructed by Ren-Wei: the revolution torii~\cite{RenWei1}, and the double torii~\cite{RenWei2}.
	
	Xu and Du \cite{Xu-Du} numerically obtained a global description of the bifurcation branches, leading for large masses to toroidal and Delaunay-like shapes, along with providing interesting pictures.
	
	This paper aims to unveil a new class of compact embedded solutions to \equ{eq:equation} with Delaunay-type shapes. The existence of such equilibria is not at all obvious, since for the closely related constant mean curvature problem $H_\Sigma = \la$, the only compact embedded solutions are spheres, as stated by the classical Alexandrov result.

	\subsection{Delaunay unduloids}\label{d}
	The Delaunay surfaces in the Euclidean 3-dimensional space, denoted as \( \mathbb{R}^3 = \{(y_1, y_2, y_3) \mid y_j \in \mathbb{R}\} \), are Constant Mean Curvature (CMC) surfaces of revolution which are translationally periodic. After a rigid motion and a dilation, we can position its axis of revolution to align with the \( y_3 \)-axis, and its constant mean curvature is set to \( H = 2 \) (hereafter assumed).

	We examine the profile curve in the half-plane \( \{(y_1, 0, y_3) \in \mathbb{R}^3 \mid y_3 > 0\} \), which, when rotated about the \( y_3\)-axis, traces out a Delaunay surface. This curve alternates periodically between maximal and minimal heights concerning the positive \( y_3 \) direction. These respective heights are referred to as the bulge radius and the neck radius of the Delaunay surface. Denote the neck radius by \( a \).
	Embedded Delaunay surfaces manifest as a 1-parameter family, known as {\em unduloids}, which can be parameterized by the neck radius \( a \in (0, 1/2] \).  For unduloids, \( a = 1/2 \) corresponds to the round cylinder. The singular surface as \( a\rightarrow 0 \) is a chain of tangent spheres with radii 1 centered along the \( y_3 \)-axis.
	Notice that we employ the definition of the mean curvature being the sum of the principal curvatures, rather than their average.
	
	\begin{figure}  { Delaunay surface  $\color{blue} \Sigma_a$, 
			$\color{blue} 0<a< \frac 12 $ }
		
		\smallskip
		\includegraphics[scale=0.25]{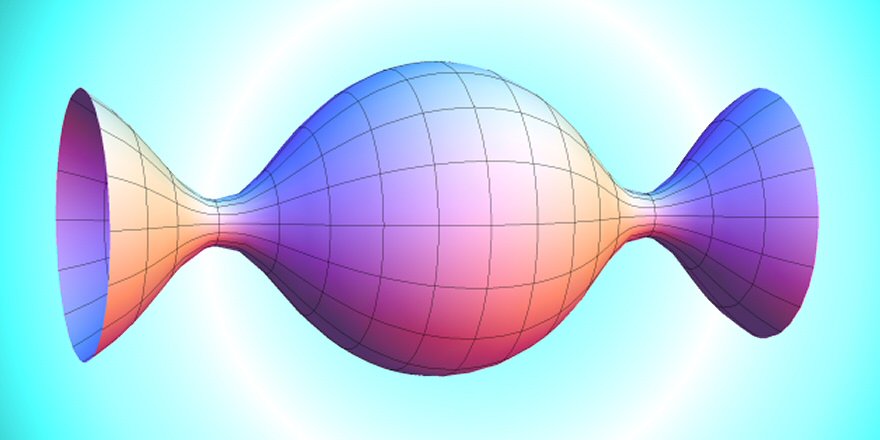}
	\end{figure}

	\medskip
	We assume $0<a<{1\over 2} $ is fixed in what follows.  
	The unduloid $\Sigma$ with neck size $a$, also written $\Sigma(a)$, can be parametrized in the form 
	\be \label{parametrization}
	y (\omega) =  (f(y_3)\cos\theta, f(y_3) \sin\theta , y_3),  \quad \omega = (\theta,y_3) \in [0,2\pi)\times \R     
	\ee
	where $f (t) = f_a (t) $ is a  positive, smooth function, periodic with period $T = T_a >0$, corresponding to the distance between two consecutive necks, its minima, which  
	can also be chosen even, i.e. 
	$ f(s)= f(-s) $.    
	The function $f(s)$ solves the Cauchy problem
	\be \label{eqf} \left\{
	\begin{aligned}
		- {f'' \over (1+ (f')^2 )^{3\over 2}} &+ {1\over f \sqrt{1+ (f')^2}} = 2\\
		f(0) = 1-a, & \quad f'(0)= 0. 
	\end{aligned}\right.
	\ee
	Let us denote by $\Omega$ 
	the volume enclosed by $\Sigma$, namely, 
	$$ 
	\Omega\, = \, \big \{\, y \in \R^3 \mid |(y_1,y_2)| < f(y_3)\, \big \}.   
	$$
	$\Sigma$ can be decomposed as the superposition of identical portions which we describe as follows. Let us define 
	\be\label{sigma0}
	\Sigma_0= \left\{\,  y \in \Sigma\,\biggr| -{T \over 2}  \leq y_3 < {T\over 2}  \right\} ,
	\ee
	and denote 
	$$
	\Sigma_k =  \Sigma_0 + (0,0, kT) = \left\{\,  y \in \Sigma\,\biggr| -{T \over 2} + kT \leq y_3 < {T\over 2} + k T \right\} .$$  
	We define similarly $\Omega_k$. 
	Thus  \be \label{sigmak}  \Sigma =   \bigcup_{k=-\infty}^\infty \Sigma_k, \quad  \Omega =   \bigcup_{k=-\infty}^\infty \Omega_k .\ee
	The volume of a single block of the Delaunay surface $\Sigma=\Sigma (a)$ will be denoted by
	\be \label{volume0}
	V= V_a = |\Omega_0|.
	\ee
	To carry out the construction of the critical set, we first consider finite truncations of the sets above, $\Sigma^n$ and $\Omega^n$, given by 
	\be \label{truncated}  
	\Sigma^n :=   \bigcup_{k=0}^{n-1} \Sigma_k, \quad  \Omega^n :=   \bigcup_{k=0}^{n-1}\Omega_k  \quad \text{ for } n\in\N, 
	\ee
	and then we allow for \emph{normal graphs perturbations} of such sets, $\Sigma^n_h$ and $\Omega^n_h$, depending on a small smooth function $h(y)$ defined on $\Sigma^n$, as follows
	\be \label{truncated-modified}
	\Sigma_h^n := \{ y + h(y) \nu (y) \, | \, y \in \Sigma^n \},
	\ee
	where $\nu(y)$ is the  
	unit normal vector at $y\in \Sigma$, which is explicitly given by  
	\be \label{normal0}
	\nu(y) = \frac {1}{\sqrt{1+ f' (y_3)^2}}  \left(  \begin{aligned}   & 
		f(y_3)^{-1} { (y_1, y_2) }\\ &  -f'(y_3)  \end{aligned}\right) 
	\ee
	and the corresponding enclosed region $\Omega_h^n $ is described by
	$$
	\Omega_h^n := \{ (r y_1, ry_2 , y_3)  \mid  y= (y_1,y_2 , y_3) \in \Sigma_h^n , \, r \in [0,1]\,  \}.$$
	The construction procedure allow us to work with perturbations $h: \Sigma \to \R$ satisfying the following symmetries:
	\be\label{simh} 
	\begin{aligned}
		h(y_1, y_2, y_3) &=  h(-y_1 , y_2 , y_3 ) , \\  h(y_1, y_2 , y_3) &= h(y_1, y_2 , -y_3 ), \\ h(y_1, y_2 , y_3) &= h(y_1, y_2 , y_3 +T).
	\end{aligned}
	\ee
	Let us now consider the change of coordinates $X$ given by
	\begin{equation}\label{defX}
		\begin{aligned}
			X:\R^3\setminus\{0\}&\to\R^3\setminus\{0\},\quad (\tilde y_1, \tilde y_2, \tilde y_3) =  X(y_1,y_2,y_3),\\
			X(y_1,y_2,y_3) 
			&=
			\Big ( y_1 \, , \, (R+y_2) \cos 
			\left (  \frac  { y_3} { R}    \right ) ,  (R+y_2) \sin  \left (\frac  { y_3} { R}    \right )\Big ),
		\end{aligned}
	\end{equation}
	where $R>0$ is determined by the constraint
	\be \label{defR}
	2\, \pi \, R= n \, T_a.
	\ee
	The transformation $X$ is the composition of a rigid translation of $(0,R,0)$, followed by  a rotation in the plane $(y_2,y_3)$ at the angle ${y_3 \over R}$.
	Finally, we now define the {\em  Delaunay unduloid collar} $\tilde \Sigma^n := X(\Sigma^n)$
	and its perturbed version 
	\be\label {7}
	\tilde \Sigma_h^n  :=  X \left( \Sigma_h^n \right) ,\ee  
	and we also introduce the associated solid bounded regions
	\be\label{tt}
	\tilde \Omega^n = X \left(  \Omega^n \right) , \quad \tilde \Omega_h^n = X\left( \Omega_h^n \right).
	\ee
	Under the symmetry assumptions \eqref{simh} on $h$, we have that, for $$
	\tilde y \in \tilde \Sigma_h^n, \quad \tilde y = X (y), \quad y \in \Sigma_h^n,
	$$
	there holds
	\be \label{variable}
	\begin{aligned}
		X ( y_1,y_2 , y_{3} +T)&= {\mathcal R}_{2\pi \over n} \tilde y \\
		X ( -y_1,y_2 , y_{3} )&= {R}_{2,3} \tilde y , \quad -{T\over 2} < y_3 <{T\over 2} \\
		X ( y_1,y_2 , - y_{3} )&= { R}_{1,2} \tilde y ,\quad -{T\over 2} < y_3 <{T\over 2}
	\end{aligned}
	\ee
	where  ${\mathcal R}_{\theta}$ is the rotation of angle $\theta$ in the $(y_2,y_3)$-plane, $R_{2,3}$ is the reflection with respect to the plane $y_1=0$, and
	$R_{1,2}$ is the reflection with respect to the plane $y_3=0$:
	{\small {$$
			\begin{aligned}
				{\mathcal R}_{\theta} &=\left( \begin{matrix} 1 & 0 & 0 \\
					0 & \cos \theta & -\sin \theta \\
					0& \sin \theta & \cos \theta\end{matrix} \right), \quad R_{2,3}= \left( \begin{matrix} -1 & 0 & 0 \\
					0 & 1 & 0 \\
					0& 0 & 1\end{matrix} \right), \quad 
				R_{1,2}= \left( \begin{matrix} 1 & 0 & 0 \\
					0 & 1 & 0 \\
					0& 0 & -1\end{matrix} \right).
			\end{aligned}
			$$}}

	\subsection{Main Result} 
	
	In this paper, we prove that in the large mass regime $m$, we can find a collar-like set $\Omega$ that is an equilibrium (critical) for the liquid drop problem, which is a small normal perturbation of a coiled Delaunay surface, of small neck size $a$.
	
	Our main result states that, for any sufficiently small neck size $a>0$ and any large mass $m>>1$, there exists a solution $ \Omega$ to the problem  \equ{eq:equation} with $ | \Omega|=m$ and such that the scaled domain 
	$ c{|\log n|^{\frac 13} } \Omega $ is a $O( |\log n|^{-1})$-perturbation of the coiled unduloid $\ttt \Omega^n$ in \equ{tt}. The number $n$ is taken close to $ [m(\log m - \log(\log m)) C_a]$ 
	for an explicit positive number $C_a$, where $[\cdot]$ denotes the integer part. 
	
	\begin{theorem}\label{teo1} 
		For any sufficiently small neck size $a>0$ there exist (explicit) positive constants $c_a, C_a$ so that, for all 
		sufficiently large  $m>1$ there is a domain $\Omega $ with 
		$| \Omega|=m$, solution of Problem $\eqref{eq:equation}$ and $n\in\N$ with 
		\be \label{mm}  
		|[m(\log m - \log(\log m)) C_a] - n| \le   1,
		\ee
		such that the scaling $ c_a|\log n|^{\frac 13}\Omega$ is a $O( |\log n|^{-1})$-normal perturbation of the  surface $\ttt \Omega^n$ in $\equ{tt}$.
	\end{theorem}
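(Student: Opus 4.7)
The plan is a Lyapunov--Schmidt reduction around the coiled unduloid $\ttt\Omega^n$, coupled with a scaling that turns $|\log n|^{-1}$ into a small parameter. Writing $\hat\Omega = \mu\,\Omega$ with $\mu = c_a|\log n|^{1/3}$, equation \eqref{eq:equation} is equivalent to
\begin{equation*}
H_{\pp\hat\Omega}(\hat x) \,+\, \frac{1}{c_a^{3}|\log n|}\int_{\hat\Omega}\frac{d\hat y}{|\hat x-\hat y|} \,=\, \hat\lambda.
\end{equation*}
When $\hat\Omega$ is close to $\ttt\Omega^n$ the Newton potential at a boundary point splits into a local ``tube'' piece and the accumulated contribution from the remaining blocks distributed along a circle of radius $R = nT_a/(2\pi)$, yielding an expansion of the form $C_a\log n + O(1)$. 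This fixes the $|\log n|^{1/3}$ scaling, so that the mean curvature and the nonlocal term balance at order one, and the volume identity $\mu^3 m = |\hat\Omega| \simeq nV_a$ produces, after careful asymptotic inversion, the quantization \eqref{mm}.

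Next, I would seek $\hat\Omega = \ttt\Omega^n_h$ and write the Euler--Lagrange equation as $\NN(h,a,\lambda)=0$, with $h:\Sigma^n\to\RR$ obeying the symmetries \eqref{simh} together with the discrete $\Z_n$ invariance inherited from the coiling map $X$; equivalently, $h$ lifts to a function on the single fundamental block $\Sigma_0$. Taylor expansion at $h\equiv 0$ gives
\begin{equation*}
\NN(h,a,\lambda) \,=\, \NN(0,a,\lambda) \,+\, \mathcal{L}\,h \,+\, Q(h), \qquad \mathcal{L} \,=\, \JJ_\Sigma \,+\, \tfrac{1}{c_a^{3}|\log n|}\,\KK,
\end{equation*}
with $\JJ_\Sigma=-\Delta_\Sigma-|A_\Sigma|^2$ the Jacobi operator of the straight Delaunay surface and $\KK$ the integral operator coming from the linearization of the nonlocal term. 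After choosing $\hat\lambda$ to absorb the leading constants, the initial error $\NN(0,a,\lambda)$ has size $O(|\log n|^{-1})$ in a suitable weighted H\"older norm, controlled by the $O(R^{-2})$ geometric correction to $H$ produced by coiling and the $O(1)$ remainder of the Coulomb integral.

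The linear analysis is the heart of the argument. Bounded Jacobi fields of $\JJ_\Sigma$ come from Euclidean motions preserving the axis and from the Delaunay neck-size deformation $Z_0 := \pp_a y\cdot\nu$; imposing \eqref{simh} and the $\Z_n$ invariance kills all but the line spanned by $Z_0$. Working in weighted H\"older spaces on $\Sigma_0$ and separating variables along $y_3$, I would establish an \emph{a priori} estimate $\|h\|_* \le C\|\mathcal{L} h\|_{**}$ uniform in $n$ for $h\perp Z_0$, by treating the small multiple of the smoothing operator $\KK$ as a compact perturbation of $\JJ_\Sigma$. A contraction argument then yields a unique solution $h = h(a,\lambda)$ of size $O(|\log n|^{-1})$ to the projected equation $\mathcal{L} h = -\NN(0,a,\lambda) - Q(h) + c(a,\lambda)\,Z_0$, depending smoothly on $(a,\lambda)$.

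The reduction closes by choosing the continuous parameters $(a,\lambda)$ so that simultaneously $c(a,\lambda)=0$ and $|\Omega| = m$, and by selecting $n\in\N$ so that this system is solvable with $a$ near the prescribed neck size; the volume balance then yields \eqref{mm} modulo the integer part, while the kernel equation reduces at leading order to a transversality identity pairing the $a$-derivative of the Delaunay period against the leading coefficient of the Coulomb logarithm, which is non-degenerate for small $a$ by the asymptotic expansion of $\Sigma(a)$ as a chain of unit spheres as $a\to 0$. The principal technical obstacle I anticipate is precisely the control of the nonlocal term: proving the expansion $C_a\log n + O(1)$ of the Newton potential of the coiled configuration with continuous, non-degenerate dependence of $C_a$ on $a$, and verifying that $\KK$ is a uniformly bounded, compact perturbation of $\JJ_\Sigma$ in the chosen functional setting, in spite of the fact that the Coulomb interaction couples all $n$ blocks simultaneously along the coil.
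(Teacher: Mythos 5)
Your overall roadmap matches the paper — Lyapunov--Schmidt reduction around the coiled unduloid $\ttt\Omega^n$, identification of the $C_a\log n + O(1)$ expansion of the Newton potential (hence the $|\log n|^{-1}$ size of the coupling and the $|\log n|^{1/3}$ scaling), and a final adjustment of the neck size to prescribe the volume. However, there is a genuine gap in the linear analysis: the kernel you impose orthogonality against is wrong. After restriction to $T$-periodic functions (equivalently, functions on the fundamental block $\Sigma_0$), the bounded Jacobi fields of the Delaunay operator are spanned by $\nu_1,\nu_2,\nu_3$, the three components of the unit normal (this is Lemma~\ref{lema1}, citing \cite{Caldiroli Musso Iacopetti}). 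The Delaunay neck-size deformation $Z_0=\pp_a y\cdot\nu$ is \emph{not} $T$-periodic because $T_a$ varies with $a$, so it plays no role in the periodic kernel. Once the symmetries \eqref{simh} are imposed, $\nu_1$ (odd in $y_1$) and $\nu_3$ (odd in $y_3$) drop out, and the surviving direction is $\nu_2\propto\sin\theta/\sqrt{1+f'^2}$, which is the infinitesimal translation perpendicular to the Delaunay axis, i.e.\ the change of the coil radius $R$.

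This mis-identification propagates to the closing step. The orthogonality condition against $\nu_2$ is not a transversality pairing with $\pp_a T_a$; rather, it pairs the geometric correction to the mean curvature produced by the coiling (the $y_2/R$ term in Proposition~\ref{lemma1}) against $\nu_2$, producing the quantity $I_a$ of \eqref{defIa}, and balances it against the $\gamma\log n$ term coming from the nonlocal piece. This is precisely the relation \eqref{pp3}, which determines $\gamma\approx 2I_a/(c_3\log n)$ and requires $I_a>0$; the free parameter used to kill the projection onto $\nu_2$ is $\gamma$ (equivalently the overall dilation), not $a$ or $\lambda$ as you propose, and $\lambda$ (the constant $d$) is fixed separately by testing against the function $\bar h$ with $J_\Sigma[\bar h]=1$ (Lemma~\ref{lemma3}). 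Only afterwards, to hit a prescribed volume $m$, does the paper vary $a$, using $I_a=2a+O(a^2)$ from Lemma~\ref{LemmaIa} to guarantee monotone dependence. So you need to replace $Z_0$ by $\nu_2$, replace the ``$a$-derivative of the period'' transversality by the positivity $I_a>0$, and separate the two roles of $\gamma$ (solvability) and $a$ (volume matching) that your proposal conflates.
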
 
	

	
	Let us consider a number $\alpha>0$ and replace $\Omega $ with $\alpha\ttt \Omega$  
	in equation \equ{eq:equation} and get  the equivalent problem 
	$$
	\alpha^{- 1 } H_{\pp \ttt \Omega} ( x) +   
	\int_{ \ttt \Omega } \frac { \alpha^3 dy}{| \alpha x- \alpha y|} =  \la  \foral x\in  \pp\ttt \Omega,      
	$$
	so that letting $\gamma = \alpha^3$ and redefining the Lagrange multiplier $\la$, 
	we get that Problem  \equ{eq:equation}  with $|\Omega|=m$ is equivalent to finding $\ttt \Omega$  such that 
	\be \label{3new}  
	\left \{ \begin{aligned}
		|\ttt \Omega | = & \gamma^{-1}  m ,   \\  H_{\pp \ttt \Omega} ( x)  \, + &\, \gamma \int_{  \ttt \Omega } \frac {  dy}{|  x-  y|}\  =\ \la  \foral x\in \pp\ttt \Omega . 
	\end{aligned}\right.
	\ee 
	To prove Theorem \ref{teo1} we consider a large number $n\in\N$ and then prove that there exists a number $\gamma \approx \frac c{\log n} $ with $c>0$ depending of the neck size $a$, and a region $\ttt \Omega$ satisfying 
	\be \label{3}
	H_{\pp \ttt \Omega} ( x)  \, + \, \gamma \int_{  \ttt \Omega } \frac {  dy}{|  x-  y|}\  =\ \la  \foral x\in \pp\ttt \Omega,
	\ee
	for a domain $\ttt \Omega $ which is $O(|\log n|^{-1})$-close to the coiled truncated Delaunay $\ttt \Omega^n$. This is the content of Theorem \ref{teo2} below. We will be able to establish this for $a>0$ small or $\frac 14 \leq a< \frac 12$. Using a continuity argument in small $a>0$ 
	we will then be able to establish the relation $|\ttt \Omega | = \gamma^{-1}  m $ for any large $m$ and suitable $n$ that satisfies \equ{mm}, so that \equ{3} is satisfied. 
	\begin{figure}  { Domain in Theorem \ref{teo1}: $\color{blue} c_a|\log n|^{\frac 13} \Omega \approx \ttt \Omega^n $ }
		
		\smallskip
		\includegraphics[scale=0.3]{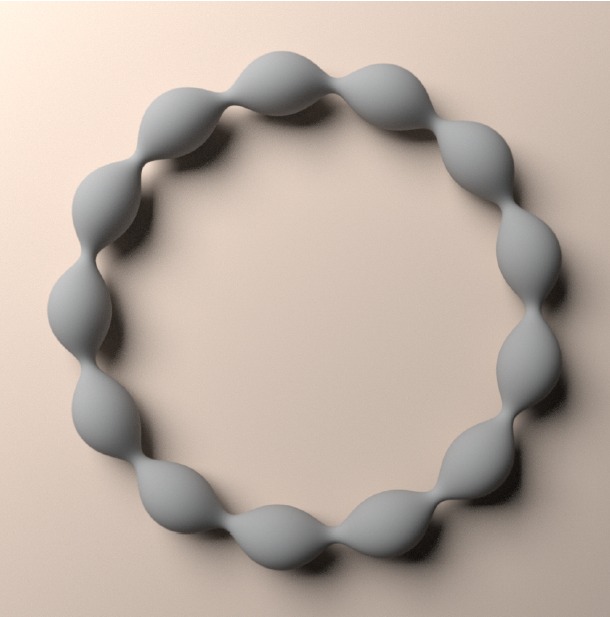}
	\end{figure}

	\section{Scheme of the proof.}
	
	Let us recall that we want to solve \equ{3}, that is, 
	\be \label{33}
	H_{\ttt \Sigma } ( x)  \, + \, \gamma \int_{  \ttt \Omega } \frac {  dy}{|  x-  y|}\  =\ \la  \foral x\in \ttt \Sigma =\partial \ttt \Omega. 
	\ee
	First, we fix a Delaunay surface $\Sigma = \Sigma(a)$ for a fixed value of the neck size $a$, for which we prove the existence of  solutions  to Problem \eqref{33} with $\ttt \Sigma$ of the form \equ{7}, namely
	$$
	\ttt \Sigma := \tilde{\Sigma}_h^n = \pp \ttt \Omega_h^n,
	$$ 
	provided $\gamma$ is chosen to fulfill a certain solvability condition. This is the main step of our construction and will occupy most of the paper.
	
	
	\medskip
	The second step consists of showing that by changing the value of the parameter $a$, we can still obtain a surface solution to \eqref{33} that is of the form \eqref{7}, having its volume prescribed by any large value $m$. Thus, we fix $0<a< \frac 12$ and define
	\be \label{defIa}
	I_a:= \int_0^{T/2} \frac f{(1+ f'(s)^2)^\frac 52 } \Big [  ff''(2-f'(s)^2)  + (1+ 3f'(s)^2)   (1+ f'(s)^2)    \Big ]\, ds.
	\ee
	where $T=T_a$ is the period of the Delaunay surface $\Sigma$ and $f=f_a$ is the profile describing $\Sigma$  as in \eqref{parametrization}, characterized by equation  \eqref{eqf}. We denote by $V_a= |\Omega_0|$ the volume of a single block repeated $T_a$-periodically, to conform the whole of $\Sigma$.  
	\medskip
	
	We will establish the following result.
	
	\begin{theorem}\label{teo2}
		Let $a \in (0,{1\over 2})$ be such that
		\be \label{Ia}
		I_a>0.
		\ee
		For all sufficiently large $n\in\N$, there exist  $\gamma_{n,a}, \lambda_{n,a}\in\R$, a function $h=h_{n,a}:\Sigma (a) \to \R$, and a compact surface as in \equ{7} of the form 
		$$  
		\tilde \Sigma_h^n  = \pp \tilde \Omega_h^n,  
		$$    
		which solves Problem \eqref{33}, with
		\be \label{volume}
		|\tilde \Omega_h^n | = n \, V_a \, \left(1+ o(1) \right) \ass n\to \infty. 
		\ee
		Moreover, there exists a uniform constant $C>0$  such that
		\be \label{est}
		\| h \|_{C^{2, \alpha} (\Sigma)} \leq {C \over \ln n}, \quad \gamma_{n,a} = {2 I_a \, T_a \over V_a } \, {1\over \ln n} \, \left(1+ o(1) \right) 
		\ee
		In \eqref{volume} and \eqref{est}, the limit $o(1) \to 0$ as $n \to \infty$ is uniformly in $a$. 
	\end{theorem}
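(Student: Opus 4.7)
My plan is to prove Theorem \ref{teo2} by a Lyapunov--Schmidt reduction about the coiled truncated Delaunay collar $\tilde\Sigma^n=X(\Sigma^n)$, seeking the normal perturbation $h$ in the closed subspace $C^{2,\alpha}_{\mathrm{sym}}(\Sigma)$ of functions satisfying \eqref{simh}. Thanks to \eqref{variable} and the $\mathcal R_{2\pi/n}$-invariance of $\tilde\Omega^n$, both the mean curvature $H_{\tilde\Sigma_h^n}$ and the Coulomb integral over $\tilde\Omega_h^n$ are automatically $T$-periodic in $y_3$, so \eqref{33} pulls back via \eqref{parametrization} and \eqref{defX} to the single nonlinear equation
\begin{equation}\label{pp:F}
\mathcal F_n(h,\gamma,\lambda)(y):=H_{\tilde\Sigma_h^n}\!\bigl(X(y{+}h\nu)\bigr)+\gamma\,\mathcal V_n(h)(y)-\lambda=0,\quad y\in\Sigma,
\end{equation}
where $\mathcal V_n(h)(y):=\int_{\tilde\Omega_h^n}|X(y+h\nu)-z|^{-1}\,dz$, in the three unknowns $h$, $\gamma$, $\lambda$.

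The first step is to expand $\mathcal F_n$ at $h=0$. A direct geometric computation from \eqref{defX} yields the coiling expansion $H_{\tilde\Sigma^n}(X(y))=2+n^{-1}g(y)+O(n^{-2})$, with $g$ smooth, $T$-periodic, and explicit. For the nonlocal term I split $\tilde\Omega^n=\bigcup_k X(\Omega_k)$ and use $|X(y)-X(y')|=2R\sin(\pi k/n)+O(1)$, $R=nT_a/(2\pi)$; a Riemann-sum estimate on the far blocks combined with a flat-limit matching on the bounded number of adjacent ones yields
\begin{equation}\label{pp:V}
\mathcal V_n(0)(y)=\tfrac{2V_a}{T_a}\log n+\Phi(y)+o(1),
\end{equation}
with $\Phi$ bounded, $T$-periodic, and even in $y_3$. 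The $\log n$ factor forces the ansatz $\gamma=\mu/\log n$ with $\mu=O(1)$, and under this scaling the Fr\'echet derivative $\mathcal L_n:=D_h\mathcal F_n|_{h=0}=\mathcal J_\Sigma+\gamma\,\mathcal K_n$ is a uniformly bounded operator on $C^{2,\alpha}_{\mathrm{sym}}(\Sigma)$, where $\mathcal J_\Sigma=-\Delta_\Sigma-|A_\Sigma|^2$ is the Jacobi operator of the flat unduloid and $\mathcal K_n$ is the linearized nonlocal term.

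Next I analyse the kernel. Under the parity constraints \eqref{simh}, all kernel directions of $\mathcal J_\Sigma$ arising from rigid motions of $\tilde\Sigma^n$ are eliminated: the two horizontal translations and the rotation around the torus axis become oscillatory in $y_3$ with period $nT\neq T$ and so violate $T$-periodicity, while the $y_3$-translation and the remaining horizontal rotations are forbidden by $y_1$- and $y_3$-parity. The only surviving direction is the one-dimensional Delaunay mode
\[
z_0(y_3)=\frac{\partial_af_a(y_3)}{\sqrt{1+f_a'(y_3)^2}}.
\]
A Fredholm argument then provides a right inverse $(\mathcal L_n|_{\langle z_0\rangle^\perp})^{-1}$ bounded uniformly in $n$ and in $\mu$ on compact sets. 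Decomposing $h=h^\perp+cz_0$, the projection of \eqref{pp:F} onto $\langle z_0\rangle^\perp$ is solved by a standard contraction mapping in a ball of size $C/\log n$, producing a unique $h^\perp=h^\perp(c,\gamma,\lambda)$. I fix the Delaunay gauge by $c=0$ and determine $\lambda$ by the constant-mode equation, which gives $\lambda_{n,a}=2+(2V_a/T_a)\gamma\log n+O(1)$.

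All that remains is the scalar bifurcation equation
\[
\Phi_n(\gamma):=\int_{\Sigma_0}z_0(y)\,\mathcal F_n\!\bigl(h^\perp(0,\gamma,\lambda_{n,a}),\gamma,\lambda_{n,a}\bigr)(y)\,dS(y)=0.
\]
Substituting the expansions above, using $\mathcal J_\Sigma z_0=0$, integrating by parts on $\Sigma_0$, and exploiting the profile ODE \eqref{eqf} to simplify the resulting terms involving $ff''$ and $(f')^2$, I expect to obtain
\[
\Phi_n(\gamma)=I_a-\frac{V_a}{2T_a}\,\gamma\log n+o(1),
\]
with $I_a$ given by \eqref{defIa}. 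The assumption $I_a>0$ of \eqref{Ia} then yields the unique solution $\gamma_{n,a}=(2I_aT_a/V_a)(\log n)^{-1}(1+o(1))$ claimed in \eqref{est}, while \eqref{volume} follows from $|\tilde\Omega_h^n|=|\tilde\Omega^n|+\int_\Sigma h\,dS+O(\|h\|^2)=nV_a(1+o(1))$. The principal obstacle is this last asymptotic identification: establishing \eqref{pp:V} with uniform subleading control over both the far and the adjacent blocks, showing that the $h^\perp$-dependent quadratic remainder and the coiling correction $g$ contribute only $o(1)$ to $\Phi_n$, and recognising the bounded constant as precisely $I_a$ by a Pohozaev-type manipulation on $f_a$ over the half-period --- this is the algebraic core of the argument and the sole place where the explicit expression \eqref{defIa} appears.
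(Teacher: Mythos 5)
The overall Lyapunov--Schmidt framework you describe is the right one and matches the paper, but the kernel analysis at its core is wrong, and this would derail the argument.

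The paper works on the flat unduloid $\Sigma$ after pulling back through $X$, and uses Lemma~\ref{lema1}: the only $T$-periodic Jacobi fields of $\Sigma$ are $\nu_1,\nu_2,\nu_3$, the three normal components. Under the parity constraints \eqref{simh}, $\nu_1=\cos\theta/\sqrt{1+(f')^2}$ is odd in $y_1$ and $\nu_3=-f'/\sqrt{1+(f')^2}$ is odd in $y_3$, so both drop out; but $\nu_2=\sin\theta/\sqrt{1+(f')^2}$ is \emph{even} in both $y_1$ and $y_3$ and $T$-periodic, so it survives and is precisely the one-dimensional kernel that must be projected out (together with a constant for the Lagrange multiplier, see \eqref{pp} and \eqref{cd}). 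Your claim that ``the remaining horizontal rotations are forbidden by $y_1$- and $y_3$-parity'' misses $\nu_2$, and your proposed kernel element $z_0=\partial_af_a/\sqrt{1+(f_a')^2}$ is in fact not a $T$-periodic function on $\Sigma$: since $T_a$ depends on $a$, differentiating $f_a(y_3+T_a)=f_a(y_3)$ gives $\partial_af_a(y_3+T_a)=\partial_af_a(y_3)-f_a'(y_3)\,\partial_aT_a$, so $z_0$ acquires a non-periodic drift unless $\partial_aT_a=0$. Thus $z_0$ does not lie in $C^{2,\alpha}_{\mathrm{sym}}(\Sigma)$, the decomposition $h=h^\perp+cz_0$ is ill-posed, and Lemma~\ref{lema1} shows that no such ``neck'' Jacobi field exists in the periodic class.

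This matters because the $\nu_2$-solvability condition is exactly what determines $\gamma$. In the paper the two competing $O(1/n)$ contributions to $\int_{\Sigma_0}E_\gamma\nu_2$ are (i) the $y_2/(Rf)$ curvature correction from Proposition~\ref{lemma1} and (ii) the $-y_2\,(2\pi|\Omega_0|/T^2)\,(\log n)/n$ term in Lemma~\ref{expN}; balancing them yields \eqref{pp3} and hence $\gamma_{n,a}\approx 2I_aT_a/(V_a\log n)$. Your ansatz \eqref{pp:V} absorbs the $y_2\,(\log n)/n$ piece into $o(1)$, losing the very term the argument needs. You acknowledge the bifurcation-equation step as speculative (``I expect to obtain''), and indeed with the wrong projector that computation cannot recover $I_a$; the Pohozaev-type identity you anticipate is exactly the paper's manipulation of \eqref{defIa} against $\nu_2$, not against $z_0$.

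Correcting the kernel to $\nu_2$ and keeping the $y_2$-proportional subleading term in the nonlocal expansion brings your outline in line with the paper, and the rest of your structure (contraction mapping in a ball of radius $C/\log n$, the constant mode determining $\lambda$, the volume estimate) is sound.
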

	
	\begin{remark}
		The positivity of the integral $I_a$ is a crucial element in our construction. When $a=0$ we have $\frac T2 =1$ and $f(s) = \sqrt{1-s^2}$, so the integral equals $0$. When $a=\frac 12 $, the case of the cylinder, we have  $f\equiv  \frac 12$ and the integral is trivially positive. We also have $T\to\pi$ as $a\to\frac 12^{-}$. In fact, assumption \equ{Ia} holds for any small neck size $a$ and also for $a\geq\frac 14$. This follows from the following lemma, whose proof we postpone to appendix \S \ref{appe}. We conjecture that $I_a>0$ for all $a\in(0,\frac 12)$.
	\end{remark}
	
	\begin{lemma}\label{LemmaIa}
		There exists $a_* \in (0, {1\over 4})$ such that
		$$
		I_a >0  \foral a \in (0 , a_*] \cup [\tfrac 14 , \tfrac 12).
		$$
		In addition, there holds
		\be\label{iia}
		I_a = 2 a + O(a^2), \quad {\mbox {as}} \quad a \to 0^+.
		\ee
	\end{lemma}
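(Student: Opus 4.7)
The plan is to first reduce $I_a$ to an explicit algebraic integral using the Delaunay first integral, then derive the small-$a$ expansion by differentiation under the integral, and finally handle positivity on $[\tfrac{1}{4},\tfrac{1}{2})$ by perturbing around the cylinder limit.

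Multiplying the ODE \eqref{eqf} by $f'$ and integrating with $f(0)=1-a$, $f'(0)=0$ gives the Delaunay first integral $f/\sqrt{1+(f')^2}=f^2+\alpha$ with $\alpha:=a(1-a)$, which yields $1+(f')^2=f^2/(f^2+\alpha)^2$ and $f''=f(\alpha-f^2)/(f^2+\alpha)^3$. Substituting into the integrand of $I_a$ and collecting terms, the bracket simplifies to
$$I_a\;=\;\int_0^{T/2}\frac{1}{f^2}\Bigl[(f^2+\alpha)^2(\alpha-5f^2)+4f^4+2\alpha f^2\Bigr]\,ds.$$
The factorization $(f^2+\alpha)^2-f^2=-(f-a)(f-(1-a))(f+a)(f+1-a)$ dictates the change of variable $s\mapsto f$, and the further angular substitution $f=(1+(1-2a)\cos\phi)/2$, which satisfies $(f-a)(1-a-f)=\tfrac{(1-2a)^2}{4}\sin^2\phi$, converts the $s$-interval $[0,T/2]$ bijectively onto $[0,\pi]$ and absorbs all neck singularities. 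The resulting closed form is
$$I_a\;=\;\int_0^\pi\frac{P(f,\alpha)(f^2+\alpha)}{\sqrt{(f+a)(f+1-a)}}\,d\phi,$$
where $P(f,\alpha)=-5f^4+(4-9\alpha)f^2+\alpha(2-3\alpha)+\alpha^3/f^2$. The integrand is jointly smooth in $(\phi,a)\in[0,\pi]\times[0,\tfrac{1}{2}]$.

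For the expansion, at $a=0$ one has $f_0=(1+\cos\phi)/2$, and the substitution $f_0=\sin\theta$ reduces the representation to $4\int_0^{\pi/2}\sin^3\theta\,d\theta-5\int_0^{\pi/2}\sin^5\theta\,d\theta=\tfrac{8}{3}-\tfrac{8}{3}=0$, confirming $I_0=0$. Differentiating under the integral, with $\partial_a f=-\cos\phi$ and $\partial_a\alpha=1$, the derivative $\partial_a G|_{a=0}$ factors as $f_0^{3/2}(1+f_0)^{-3/2}R(f_0)$ for an explicit degree-$5$ polynomial $R$. After simplifying $(1+f_0)^{3/2}\sqrt{1-f_0}=(1+f_0)\sqrt{1-f_0^2}$ and performing polynomial division by $(1+f_0)$, the integral splits into a pure polynomial part (evaluated by Wallis' formulas; after exact cancellation of the $\pi$-terms this equals $3$) and a remainder contributing $-\int_0^{\pi/2}d\theta/(1+\sin\theta)=-1$. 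Hence $I_a'(0)=3-1=2$, which gives $I_a=2a+O(a^2)$ and proves $I_a>0$ on $(0,a_*]$ for some $a_*\in(0,\tfrac{1}{4})$, as well as the expansion \eqref{iia}.

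For positivity on $[\tfrac{1}{4},\tfrac{1}{2})$, at the cylinder limit $a=\tfrac{1}{2}$ one has $f\equiv\tfrac{1}{2}$, the integrand is identically $1/2$, and $I_{1/2}=\pi/4>0$. By continuity $I_a>0$ in a half-neighborhood of $\tfrac{1}{2}$. To reach $a=\tfrac{1}{4}$ one uses quantitative estimates: on this range $f(\phi,a)\in[a,1-a]\subset[\tfrac{1}{4},\tfrac{3}{4}]$ stays uniformly bounded away from $0$, so all algebraic factors in the integrand are controlled, and one may either verify a pointwise lower bound $G(\phi,a)\ge 0$ via case analysis in $(f,\alpha)$, or express $I_a$ in complete elliptic integrals of modulus involving $1-2a$ and check monotonicity in $a$. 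The main obstacle is this last step: unlike the singular limits $a\to 0$ and $a\to\tfrac{1}{2}$, the interval $[\tfrac{1}{4},\tfrac{1}{2})$ is non-perturbative and the integrand has no obvious sign-definite factorization, so one must work through explicit algebra, possibly augmented by verification of a finite number of scalar inequalities. Everything else in the proof reduces to the mechanical Wallis-type computation described above once the first integral is exploited.
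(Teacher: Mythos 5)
Your reduction via the Delaunay first integral $f/\sqrt{1+(f')^2}=f^2+a(1-a)$ and the resulting closed form
$$I_a=\int_0^\pi \frac{P(f,\alpha)(f^2+\alpha)}{\sqrt{(f+a)(f+1-a)}}\,d\phi,\qquad f=\tfrac{1}{2}\bigl(1+(1-2a)\cos\phi\bigr),$$
are correct; I checked the bracket equals $-5f^6+(4-9\alpha)f^4+\alpha(2-3\alpha)f^2+\alpha^3$, the factorization of $(f')^2$, the identity $(f-a)(1-a-f)=\tfrac{(1-2a)^2}{4}\sin^2\phi$, and the final Wallis computation giving $I_a'(0)=3-1=2$. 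Your route to the small-$a$ expansion is genuinely different from the paper's: the paper passes to conformal coordinates $(x(t),z(t))$, establishes the asymptotics $x(t)=\operatorname{sech} t\,(1+a(-1+t\tanh t))+a^2\varphi_a(t)$ by a contraction-mapping argument, and only then Taylor-expands $I_a=\int_0^\tau A_aB_a\,dt$; your approach stays in the $(s,f)$ picture and is more elementary. One local flaw: the integrand is \emph{not} jointly smooth on $[0,\pi]\times[0,\tfrac12]$. The summand $\alpha^3/f^2$ (equivalently $\alpha^3(f^2+\alpha)/(f^2\sqrt{(f+a)(f+1-a)})$ after multiplying back) has an $a$-derivative that is discontinuous at $(\phi,a)=(\pi,0)$: along $\phi=\pi$ it tends to $1$, while for fixed $\phi<\pi$ it tends to $0$. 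Its total contribution to $I_a$ is in fact $O(a^2)$ (near the neck $\pi-\phi\sim\sqrt a$ one has $f\sim a+(\pi-\phi)^2/4$, and the scaling gives an $a^2$ bound), so your conclusion survives, but you must split that term off and estimate it separately rather than invoke joint smoothness and differentiate under the integral wholesale.

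The serious gap is the range $a\in[\tfrac14,\tfrac12)$, which you explicitly leave open. In fact the pointwise lower bound you contemplate falls out of the very algebra you already did. Using $1+(f')^2=f^2/(f^2+\alpha)^2$ one checks $P(f,\alpha)(f^2+\alpha)=f_*\,(f^2+\alpha)^3/f^2$ where
$$f_*=f^2\bigl(-1+4(f')^2\bigr)+a(1-a)\bigl(3+2(f')^2\bigr),$$
so the sign of your integrand is governed by $f_*$. Dropping the nonnegative $(f')^2$ contributions and using $f\le 1-a$ gives $f_*\ge -f^2+3a(1-a)\ge -(1-a)^2+3a(1-a)=(1-a)(4a-1)\ge 0$ for $a\ge\tfrac14$, with strict inequality away from the neck and bulge, hence $I_a>0$. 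This is precisely the paper's argument for that range (stated there as $I_a=\int_0^{T/2} f_*/(1+(f')^2)\,ds$). No continuity-from-$a=\tfrac12$ bootstrap, elliptic integrals, or case-by-case scalar verification is needed.
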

	
	\medskip
	As in many problems of this type, including the related construction of asymptotically singular CMC surfaces, such as those in \cite{Mazzeo Pacard},  the proof of Theorem \ref{teo2} involves \emph{a Lyapunov–Schmidt type reduction argument}. We now briefly detail this procedure, applied to our problem. For all $\gamma $ suitably small, we perform an expansion of the terms involved in the equation as functions of the perturbation $h$. In 
	Section \S \ref{sec3} we expand the mean curvature $H_{\tilde \Sigma_h^n}$ of the perturbed toroidal Delaunay $\tilde \Sigma_h^n$, see \eqref{7}. This expansion involves the Jacobi operator of the Delaunay surface under periodic and symmetry conditions. In Section \S \ref{sec4} we expand the integral operator in $h$ and combine inhomogeneous and smaller order terms in $h$ arising from both parts. This leads to solving a nonlinear PDE involving the Jacobi operator of the perturbation $h$ and an integral operator of $h$, for which we first solve a projected version. The solution turns out to be a nice function of the parameter $\gamma$. A solution to the full problem is then found by suitably adjusting $\gamma$, that needs to be of size $O(|\log n|^{-1})$. This approach leads us to a perturbation $h$ of a comparable size. Finally, a fixed point argument is performed to solve the nonlinear PDE, in which the Jacobi operator is inverted under a zero-mean condition for $h$. 
	
	In sections \S 5 and \S 6 we introduce the necessary linear elements and the full fixed point argument.  We devote the appendix to some technical results needed in the body of the paper. 
	
	
	\medskip
	The result in Theorem \ref{teo2} shows that for $n\in\N$ large enough there exists  a solution to Problem \eqref{3} for regions $\Omega\subset\R^3$ satisfying
	\begin{align*}
		& |\Omega | = \gamma_{n,a}^{-1} m, \quad {\mbox {for}} \quad m= \gamma_{n,a}  \,  |\tilde \Omega_h^n | = {n \over \ln n} \, g_n (a) \\
		&{\mbox {where}} \quad g_n (a) = 2 \, I_a \, T_a \,  (1+ o(1)),
	\end{align*}
	and $o(1) \to 0$ as $n \to \infty$, uniformly for $a$.
	The second step in our argument consists in showing that the result remains true for any $m$ sufficiently large.
	
	\medskip
	To this purpose, let $m\in\R$ be given, large and positive, and take $n \in \N$ so that
	$$
	{n \over \ln n} \, g_n (a)  < m < {n +1 \over \ln (n+1) } \, g_{n+1} (a), 
	$$
	for some $a\in (0 , a_*)$ fixed as in Lemma \ref{LemmaIa}.
	Since
	$$
	{n+1 \over \ln (n+1)} ={n \over \ln n}
	\left( 1+ {1\over n \ln n} {1\over 1+ {\ln (1+ n^{-1}) \over \ln n}} - {\ln (1+ n^{-1}) \over (\ln n)^2} \right),
	$$
	we have that
	$$
	{n \over \ln n} \, g_n (a)  < m < {n  \over \ln n } \, g_{n+1} (a) (1+ o(1)), \quad 
	$$
	with $o(1) \to 0$  as $n \to \infty$. From \eqref{iia} in Lemma \ref{LemmaIa} and possibly choosing a smaller $a_*$, we can find $\delta >0$ small (for large $n$) such that
	\begin{align*}
		& a-\delta,\, \, a+\delta \in (0,  a_*),\text{ and}\\
		& {n \over \ln n} g_n (a-\delta ) < {n \over \ln n} \, g_n (a)  < m < {n  \over \ln n } \, g_{n+1} (a) (1+ o(1))  < {n \over \ln n} g_n (a+\delta ).  
	\end{align*}
	From the Intermediate Value Theorem there exists $b \in (0 ,  a_*)$ such that $m= {n \over \ln n} g_n (b),$ and Theorem \ref{teo2} guarantees the existence of a solution to \eqref{3} with $|\Omega|=m$. This concludes the proof of Theorem \ref{teo1}.
	
	\medskip
	Observe that the result in Theorem \ref{teo2} is valid for $a\in[{1\over 4} , {1\over 2})$. Moreover, the validity of Theorem \ref{teo2} only relies upon the condition $I_a >0$. This condition is satisfied in this range of values for $a$. 
	Further information on $I_a$, such as on its derivatives in $a$, are required to establish Theroem \ref{teo1} for $a$ in this range.  While we do not address this issue here, we believe that
	$I_a >0$ and ${d I_a \over da} >0$ for all $a \in (0,{1\over 2}).$

	\section{Expansion of the curvature for  $\tilde \Sigma^n_h$} \label{sec3}

	The purpose of this section is to derive an expression for the mean curvature $H_{\tilde \Sigma_h^n}$ of the perturbed toroidal Delaunay $\tilde \Sigma_h^n$, see \eqref{7} in the Introduction. Before stating our result, we introduce some notation.
	
	The Delaunay surfaces $\Sigma^n$ mentioned in the introduction are surfaces of revolution, thus a natural parametrization for them is given in cylindrical coordinates. Specifically, if the axis of rotation is vertical, and $y_3$ represents a coordinate along this axis while $\theta$ represents the angular variable around it, then a parametrization of  the unperturbed surface $\Sigma^n$ is
	\begin{align*}
		y (\omega ) = \left[ \begin{matrix} f (y_3)  \cos \theta \\
			f(y_3) \sin \theta \\
			y_3 \quad  \end{matrix}\right] , \quad \omega = (\omega_1 , \omega_2) =(\theta , y_3),
	\end{align*}
	see \eqref{parametrization}-\eqref{eqf}. The set of parameters $\omega$ is given by
	\be \label{defLambda}
	\omega \in \Lambda 
	:=\{ (\theta , y_3) \, : \, \theta \in [0,2\pi), \, y_3 \in [-\tfrac T2 , -\tfrac T2 + n T ] \}
	\ee
	and the function $f$ solves the Cauchy problem \eqref{eqf} and it is periodic of period $T$.
	Tangent vectors to the surface $\Sigma^n$ are linear combinations of 
	\be \label{tangetv}
	Y_1 (\omega )= D_{\omega_1} y =  \left[ \begin{matrix} -f (y_3)  \sin \theta \\
		f(y_3) \cos \theta \\
		0 \quad  \end{matrix}\right], \quad Y_2 (\omega )= D_{\omega_2} y =  \left[ \begin{matrix} f' (y_3)  \cos \theta \\
		f'(y_3) \sin \theta \\
		1 \quad  \end{matrix}\right]
	\ee
	where $D_{\omega_1} = D_\theta$, $D_{\omega_2} = D_{y_3}$.
	The unit normal vector $\nu (y)$ to $\Sigma^n$ at $y \in \Sigma^n$ as in \eqref{normal0} is expressed in the coordinates $\omega$ as
	\be \label{normal}
	\nu (\omega ) = {1\over \sqrt{1+ f' (y_3)^2} } \left[ \begin{matrix}  \cos \theta \\
		\sin \theta \\
		-f' (y_3 ) \quad  \end{matrix}\right], \quad \omega = (\theta , y_3) .
	\ee
	The metric on $\Sigma^n$ is defined by
	\be \label{defgg}
	g= Y^T \, Y, \quad {\mbox {where}} \quad Y = [Y_1 \, \, Y_2] 
	\ee
	and the second fundamental form corresponds to
	\be \label{defAA}
	A= - (Y^T \, Y)^{-1} \, Y^T \, D_\omega \nu.
	\ee
	In this Section we prove the following result.	
	\begin{prop}\label{lemma1}
		Let $\alpha \in (0,1)$ and  $h: \Sigma \to \R$ be a $C^{2,\alpha}$-function satisfying~\eqref{simh}. For a point $y\in\Sigma^n$, we write
		\begin{equation}\label{tildeHH}
			\tilde H_{\tilde \Sigma_h^n} ( y ) = H_{\tilde \Sigma_h^n} \left( \tilde y_h \right) , \quad \tilde y_h  = X (y_h) \in \tilde \Sigma_h^n, \quad 
		\end{equation}
		where
		\be \label{yh}
		y_h = y + h(y) \nu (y) \in \Sigma_h^n,\quad y \in \Sigma^n, 
		\ee
		and $\nu $ given by \eqref{normal0}. Then, there holds the expansion of the curvature
		\be \label{expH}
		\begin{aligned}
			\tilde H_{\tilde \Sigma_h^n} (y ) 
			=&\;  2+ {y_2 \over R \, f} \Biggl(    { (2-(f')^2 ) \, f \, f'' \over (1+ (f')^2 )^{5\over 2}} +{1 + 3 (f')^2 \over (1+ (f')^2 )^{3\over 2}} \Biggl)\\
			& + n^{-2} a (y)  - J_{\Sigma^n} [h]   + n^{-1} \, \ell [h, D h, D^2 h](y) \\
			&+ q[h, D h , D^2 h] (y)\quad {\mbox {as }} n \to \infty.
		\end{aligned}
		\ee 
		where $R$, in \eqref{expH}, is given by \eqref{defR}, $a : \Sigma^n \to \R$ is a smooth function uniformly bounded together with its derivatives as $n \to \infty$, and it satisfies the symmetries \eqref{simh}. Here, $J_{\Sigma^n}$ denotes the Jacobi operator of $\Sigma^n$, defined by
		\be \label{Jacobi}
		J_{\Sigma^n } [h]  =  \frac 1{ \sqrt{\det g}} \pp_j (g^{ij} \sqrt{\det g}\,\pp_j h) + |A|^2 h 
		\ee
		where $g$ is the metric in \eqref{defgg}, $g^{ij}:=(g^{-1})_{ij}$, while $A$ is the second funcdamental form defined in \eqref{defAA}. In \eqref{expH}, $\ell $ is a smooth function, which is uniformly bounded, together with its derivatives, as $n \to \infty$, and it satisfies the symmetries \eqref{simh}. Moreover, $\ell (0,0,0) = 0$, $\nabla \ell (0,0,0) \not= 0$. In \eqref{expH}, $q$ is a smooth function, uniformly bounded with its derivatives as $n \to \infty$, and it satisfies the symmetries \eqref{simh}. It depends of $h$, $D_\omega h$ and $D_\omega^2 h$ with
		$$
		q(0,0,0)=0, \quad D q (0,0,0) = 0, \quad D^2 q(0,0,0) \not= 0. 
		$$
		If $h$ is also even with respect to $y_2$, then $q$ is even in $y_2$.
	\end{prop}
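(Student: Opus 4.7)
The plan is to compute the mean curvature of $\tilde\Sigma_h^n$ directly in the Delaunay coordinates $\omega=(\theta,y_3)\in\Lambda$ from \eqref{defLambda}, and then organize the result according to two independent small parameters: the inverse torus radius $R^{-1}$ (equivalently $n^{-1}$ by \eqref{defR}), and the perturbation $h$ together with its first two derivatives. First I would parametrize $\tilde y_h(\omega) = X\bigl(y(\omega)+h(y(\omega))\,\nu(y(\omega))\bigr)$ using \eqref{parametrization} and \eqref{normal}, compute the tangent vectors $\tilde Y_j = DX(y_h)\bigl[Y_j + (D_{\omega_j}h)\,\nu + h\,D_{\omega_j}\nu\bigr]$, the induced metric $\tilde g_{ij}=\tilde Y_i\cdot\tilde Y_j$, the unit normal $\tilde\nu$, the second fundamental form $\tilde A_{ij}=-\tilde Y_j\cdot D_{\omega_i}\tilde\nu$, and finally $\tilde H=\tilde g^{ij}\tilde A_{ij}$.

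The crucial simplification is that the coiling map $X$ in \eqref{defX} depends on $(y_2,y_3)$ only through the combinations $y_2/R$ and $y_3/R$, so one has clean power series $DX = I + R^{-1}M_1 + R^{-2}M_2+\cdots$ and $DX^T DX = I + R^{-1}N_1 + R^{-2}N_2+\cdots$, with $M_k,N_k$ smooth and bounded uniformly in $n$. Substituting into the formulae for $\tilde g$, $\tilde A$ and $\tilde H$, and then Taylor-expanding in $(h,Dh,D^2h)$ around zero, every resulting term falls into exactly one of the six pieces in \eqref{expH}. The leading constant $2$ is the mean curvature of the underlying Delaunay $\Sigma^n$, encoded in \eqref{eqf}. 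The $h$-free $R^{-1}$-term is obtained by inserting $N_1$ (resp.\ $M_1$) once in the expansions of $\tilde g$ and $\tilde\nu$; a direct calculation shows it equals $(y_2/Rf)$ times the bracket displayed in \eqref{expH}, a quantity whose $\theta$-integral will later be tied to the integrand of $I_a$ in \eqref{defIa}. The remaining $h$-free $R^{-2}$-contributions are absorbed into $a(y)$. The $h$-linear, $R^0$-piece is the Jacobi operator $-J_{\Sigma^n}[h]$, which is the classical first variation of the mean curvature at a CMC configuration. The $h$-linear, $R^{-1}$-piece forms $n^{-1}\ell$ so that $\ell(0,0,0)=0$ by construction, and $\nabla\ell(0,0,0)\neq 0$ follows from the explicit coupling between $M_1$ and the Delaunay derivatives of $Y_j$ and $\nu$. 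All remaining terms are at least quadratic in $(h,Dh,D^2h)$ and are collected into $q$, whose vanishing to second order at the origin is immediate, while $D^2 q(0,0,0)\neq 0$ follows from the genuine quadratic contributions coming, for instance, from the $(1+|\nabla h|^2)^{-1/2}$ factors in $\tilde\nu$.

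The symmetries \eqref{simh} of $a$, $\ell$ and $q$ follow from the equivariance of $X$ recorded in \eqref{variable}, together with the assumed symmetries on $h$: each of the three group elements (the reflections $R_{2,3}$, $R_{1,2}$ and the rotation $\mathcal R_{2\pi/n}$) commutes with every ingredient of the construction, so every coefficient of the expansion inherits the appropriate invariance; the additional evenness of $q$ in $y_2$ under an extra $y_2$-evenness assumption on $h$ is seen by the same parity tracking. Smoothness and uniform bounds on $a$, $\ell$, $q$ together with their derivatives as $n\to\infty$ come from the fact that all rational expressions appearing are polynomials in $h$, $Dh$, $D^2h$, $M_k$, $f$, $f'$, $f''$ divided by factors of the form $(1+|\nabla h|^2)^{k/2}$ and $(1+(f')^2)^{k/2}$, which are bounded below whenever $\|h\|_{C^1}$ is small.

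The main obstacle is not conceptual but combinatorial: one must expand the metric, its inverse, the unit normal and the second fundamental form simultaneously as double power series in $R^{-1}$ and in $(h,Dh,D^2h)$, and distribute each term into $a$, $\ell$ or $q$ according to its total order. The cleanest bookkeeping is to introduce two formal grading parameters, one for the power of $R^{-1}$ and one for the degree in $(h,Dh,D^2h)$, carry the calculation at that abstract level, and only at the end collapse into the three remainder pieces. The symmetry identities in \eqref{variable} and the closed-form ODE \eqref{eqf} then serve as useful cross-checks at each stage of the expansion.
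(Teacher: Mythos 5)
Your overall plan --- a single direct computation of $\tilde H_{\tilde\Sigma_h^n}$ graded simultaneously by powers of $R^{-1}$ and by the degree in $(h,Dh,D^2h)$ --- is a different organization from the paper's: there the proof first relates $\tilde H_{\tilde\Sigma_h^n}$ to $H_{\Sigma_h^n}$ (Lemma \ref{prop31}, the coiling correction, producing the $R^{-1}$, $n^{-2}$ and $n^{-1}\ell$ terms) and then relates $H_{\Sigma_h^n}$ to $H_{\Sigma^n}=2$ (Lemma \ref{prop32}, a standard normal-graph variation yielding $-J_{\Sigma^n}[h]+q$). A one-shot double expansion could certainly reproduce the same result, and your grouping of terms into the six pieces of \eqref{expH} is the right bookkeeping; it also makes explicit that $q$ is entirely $R$-independent while $\ell$ is exactly the $h$-dependent part of the $R^{-1}$ correction.

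However, there is a concrete error in the technical claim driving your expansion. You assert $DX = I + R^{-1}M_1 + R^{-2}M_2 + \cdots$ with $M_k$ bounded uniformly in $n$. This is false on the full parameter domain $\Lambda$ from \eqref{defLambda}: there $y_3/R$ sweeps essentially all of $[0,2\pi]$, and $DX = Q + (y_2/R)M$ where $Q$ is the rotation by angle $y_3/R$ in the last two coordinates, which is in general nowhere near the identity. What is close to the identity is $DX^T\,DX = I + (2y_2/R + y_2^2/R^2)C$, precisely because $Q^TQ=I$. The computation does not work by pretending $DX\approx I$; it works by cancelling $Q$ wherever it appears: the induced metric sees only $DX^T\,DX$, and in the second fundamental form the normal transforms as $\tilde\nu_h = Q\nu_h + O(R^{-1})$, with the $Q$ killed upon multiplication by $DX^T$ in $\tilde Y^T D_\omega\tilde\nu$. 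If one instead naively expands $DX - I$, this quantity is $O(1)$ rather than $O(R^{-1})$ and the grading collapses. The alternative fix would be to first invoke the $\mathcal{R}_{2\pi/n}$-equivariance recorded in \eqref{variable} to reduce the curvature computation to the fundamental cell $\Sigma_0$, where $y_3/R=O(n^{-1})$ and $Q$ really is close to the identity; but that reduction step does not appear in your proposal. One of these two devices --- the $Q^TQ=I$ cancellation, or restriction to $\Sigma_0$ by symmetry --- must be inserted for the plan to go through.
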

	
	\begin{proof} 
		We introduce the parametrization for $\tilde \Sigma_h^n$ which is induced by one for $\Sigma^n_h$ via the map $X$, see \eqref{defX}.
		
		\medskip
		
		For a small function $h$ on $\Sigma$ we consider the normal graph $\Sigma_h^n$ to the unperturbed Delaunay surface $\Sigma^n$, given by  
		$$
		\Sigma_h^n =   \{  y+ h(y)\nu(y)\ | \    y\in \Sigma^n\} .
		$$
		This surface is naturally parametrized as 
		\begin{align*}
			y_h (\omega ) = \left[ \begin{matrix} (f(y_3) + {h (\omega ) \over \sqrt{1+ (f')^2 (y_3) }} ) \cos \theta \\
				(f (y_3) + {h (\omega )\over \sqrt{1+ (f')^2 (y_3) }} ) \sin \theta \\
				y_3 - {f' (y_3)  h (\omega ) \over \sqrt{1+ (f')^2 (y_3) }}  \quad  \end{matrix}\right] , \quad \omega = (\theta , y_3).
		\end{align*}
		Therefore, a parametrization of the surface 
		$ \tilde \Sigma_h^n =   X( \Sigma_h^n )$ is given by $\tilde y_h = X\circ y_h$ or  
		\begin{align*}
			\tilde y_h (\omega) &= \left[ \begin{matrix} f_h  \cos \theta \\
				( R+ f_h   \sin \theta ) \, \cos \left( {y_3 - \bar  h \over R}\right)  \\
				( R+ f_h   \sin \theta ) \, \sin \left( {y_3 - \bar  h \over R}\right)   \end{matrix}\right] ,  \quad \omega = (\theta , y_3) \quad  {\mbox {where}}\\
			f_h (\omega)  &=f  + {h  \over \sqrt{1+ (f')^2  }},\quad
			\bar h (\omega ) = {f'  h  \over \sqrt{1+ (f'  )^2} }.
		\end{align*}
		As a first step in our argument, we look for a relation between the mean curvatures of the perturbed toroidal-Delaunay surface $\tilde \Sigma_h^n$ and the perturbed Delaunay surface $ \Sigma_h^n$.

		\begin{lemma}\label{prop31}
			Let $\alpha \in (0,1)$ and  $h: \Sigma\to \R$ be a $C^{2, \alpha}$ function satisfying \eqref{simh}. Then for $y \in \Sigma^n$
			\be \label{expH*}
			\begin{aligned}
				\tilde H_{\tilde \Sigma_h^n} (y ) &= H_{\Sigma_h^n } (y_h) 
				+ {\sin \theta \over R} \Biggl(    { (2-(f')^2 ) \, f \, f'' \over (1+ (f')^2 )^{5\over 2}} +{1 + 3 (f')^2 \over (1+ (f')^2 )^{3\over 2}} \Biggl)
				\\&+ n^{-2} \, a(y)
				+ n^{-1} \, \ell [h, D h, D^2 h] \quad {\mbox {as }} n \to \infty,
			\end{aligned}
			\ee 
			where $\tilde  H_{\tilde \Sigma_h^n} $ as in \eqref{tildeHH} and
			$$
			y_h = y + h(y) \nu (y)
			$$
			as in \eqref{yh}.
			Here $a(y)$ and $\ell$ stand for  smooth functions of $y$, which are uniformly bounded with their derivatives, as $n \to \infty$, and they satisfy the symmetries \eqref{simh}. Besides,
			$\ell $ depends on $h$ and its derivatives, with  $\ell (0,0,0) = 0$ and  $\nabla \ell (0,0,0) \not= 0$.
			
		\end{lemma}
		
		The second step in our argument to prove Proposition \ref{lemma1} consists in relating the mean curvatures of the perturbed Delaunay surface $ \Sigma_h^n$ and the un-perturbed Delaunay surface $ \Sigma^n$.

		\begin{lemma}\label{prop32}
			Let $\alpha \in (0,1)$ and  $h: \Sigma^n \to \R$ be a  $C^{2, \alpha}$ function satisfying \eqref{simh}. Then for $y \in \Sigma^n$
			$$
			H_{\Sigma^n_h} (y_h) = H_{\Sigma^n} (y) - J_{\Sigma^n}  [h]  + q[h, D_\omega h , D_\omega^2 h]
			$$
			where 
			$$
			y_h = y + h(y) \nu (y)
			$$
			as in \eqref{yh}, and 
			$J_{\Sigma^n}$ is the Jacobi operator defined in \eqref{Jacobi}.
			The function $q$ is a smooth function of $y$, uniformly bounded with its derivatives as $n \to \infty$, and it satisfies the symmetries \eqref{simh}. If $h$ is also even with respect to $y_2$, then $q$ is even in $y_2$. It depends of $h$, $D_\omega h$ and $D_\omega^2 h$ with
			$$
			q(0,0,0)=0, \quad D q (0,0,0) = 0, \quad D^2 q(0,0,0) \not= 0. 
			$$
		\end{lemma}

		The proof of Proposition \ref{lemma1} readily follows from Lemmas \ref{prop31}
		and \ref{prop32}, and the fact that $H_{\Sigma^n} (y)=2.$

	\end{proof}

	\medskip
	\begin{remark}\label{rem1}
		For $h: \Sigma^n \to \R$  satisfying \eqref{simh}, the function
		$$
		y \to H_{\Sigma_h^n} (y_h), \quad y_h = y + h(y) \nu (y)
		$$
		satisfies the same symmetries \eqref{simh}. Besides, if $h$ is even with respect to $y_2$, then $H_{\Sigma_h^n} (y_h)$ is even in $y_2$. Also the function 
		$$
		y \to \tilde H_{\tilde \Sigma_h^n} (y)
		$$
		defined in \eqref{tildeHH}, enjoys \eqref{simh}: it is even in $y_1$, $y_3$ and periodic of period $T$ in $y_3$. This is consequence of the properties \eqref{variable} of the map $X$. 
	\end{remark}

	\medskip
	
	The rest of the section is devoted to prove Lemmas \ref{prop31}
	and \ref{prop32}.
	\begin{proof}[Proof of Lemma \ref{prop31}] 
		Let 
		$\tilde Y_h$ and $Y_h$ be the $3 \times 2$ matrices given by
		\be \label{defY}
		\tilde Y_h (\omega ) = [ \tilde Y_{h1} \quad  \tilde Y_{h2} ], \quad   Y_h (\omega ) = [  Y_{h1} \quad  Y_{h2} ],
		\ee
		where
		\be \label{defYi}
		\tilde Y_{hj} = D_{\omega_j} \tilde  y_h (\omega), \quad  Y_{hj} = D_{\omega_j}   y_h (\omega),
		\ee
		for $j=1,2$.
		The Riemannian metrics on $\tilde \Sigma_h^n$ and $\Sigma_h^n$ are respectively given by the $2 \times 2$ matrices 
		\be \label{defg}
		\tilde g_h  = \tilde Y_h^T \, \tilde Y_h, \quad  g_h =   Y_h^T  \, Y_h, 
		\ee
		where 
		the symbol $^T$ stands for taking the transposed matrix.
		Let $\nu_h$ be the unit normal to $\Sigma_h^n$
		$$
		\nu_h (\omega )= {Y_{h1} \times Y_{h2}  \over \| Y_{h1} \times Y_{h2} \|} 
		$$
		and  $\tilde \nu_h (\omega) $ be the unit normal vector to $\tilde \Sigma_h^n$,
		$$
		\begin{aligned}
			\tilde \nu_h (\omega ) &= {\tilde Y_{h1}  \times \tilde Y_{h2}  \over \| \tilde Y_{h1}  \times \tilde Y_{h2} \|}, \quad {\mbox {where}} \quad \tilde Y_{hj} = D_{\omega_j}  \tilde y_h = D_j \left( X \circ y_h \right) , \quad  j=1,2.
		\end{aligned}
		$$
		We also introduce the $3 \times 2$ matrices for $\tilde \Sigma^n_h$ and $\Sigma_h^n$ given respectively by 
		\be \label{defB}
		\tilde B_h = [ D_{\omega_1}  \tilde \nu_h \quad D_{\omega_2} \tilde \nu_h ] \quad  B_h = [ D_{\omega_1} \nu_h \quad D_{\omega_2} \nu_h ].
		\ee
		The mean curvatures of $\tilde \Sigma^n_h$ and $\Sigma^n_h$ can be computed using  the   following formulas (see for instance \cite{pressley})
		\be \label {curvas}
		\begin{aligned}   
			H_{\tilde \Sigma_h^n} &= \, {d \over dz} \log \left( \sqrt{ \det \tilde g_h (z)} \right)_{|_{z=0}} \\
			H_{ \Sigma_h^n} &=\, {d \over dz} \log \left( \sqrt{ \det  g_h (z)} \right)_{|_{z=0}},
		\end{aligned}\ee
		where for any $z \in \R$ 
		\begin{align*}
			\tilde g_h (z)  &= [ \tilde Y_h + z \tilde B_h  ]^T [ \tilde Y_h + z \tilde B_h  ] , \\ g_h(z) &= [Y_h + z B_h  ]^T [Y_h + z  B_h ].
		\end{align*}
		Let $\tilde A_h$ and $A_h$ be the second fundamental forms of $\tilde \Sigma_h^n$ and $\Sigma_n^h$ respectively
		$$
		\tilde A_h = -  \left(  \tilde Y_h^T \,  \tilde Y_h \right)^{-1}  \,  \tilde Y_h^T 
		\,  D_\omega \tilde \nu_h, \quad 	 A_h = -  \left(   Y_h^T \, Y_h \right)^{-1}  \,   Y_h^T 
		\,  D_\omega  \nu_h .
		$$
		We claim that
		\be \label{tildegh}
		\begin{aligned}
			\tilde g_h (z) &= 
			\tilde Y_h^T \, \tilde Y_h \, \left( I - z \tilde A_h \right)^2 = \tilde g_h \, \left( I - z \tilde A_h \right)^2 \\
			g_h (z) &= 
			Y_h^T \,  Y_h \, \left( I - z  A_h \right)^2 =  g_h \, \left( I - z  A_h \right)^2.
		\end{aligned}
		\ee
		The proof of the first formula in \eqref{tildegh} follows from the following geometric considerations. 
		For any $z \in \R$ a straightforward computation gives
		\begin{align*}
			\tilde g_h(z)&=  \tilde Y_h^T \, \tilde Y_h + z \left( \tilde Y_h^T \,  D_\omega \tilde \nu_h  + D_\omega \tilde \nu_h^T \,  \tilde Y_h \right) + z^2 D_\omega \tilde \nu_h^T \, D_\omega \tilde \nu_h .
		\end{align*}
		By definition of $\tilde \nu_h$ we have that $ \tilde Y_{hi}^T  \, \tilde \nu_h = \tilde \nu_h \cdot  \tilde Y_{hi} =0$ for $i=1,2$ (see \eqref{defYi}). Differentiating this expression by $\omega_j$ and using the fact that $D_{\omega_i }  \tilde Y_{hj} = D_{\omega_j }  \tilde Y_{hi} $, we obtain that
		$D_{\omega_i} \tilde \nu_h \cdot \tilde Y_{hj} = D_{\omega_j} \tilde \nu_h \cdot \tilde Y_{hi}  $. Hence for $i, j=1,2$,
		$$
		\left( \tilde Y_h^T  \, D_\omega \tilde \nu_h \right)_{ij} =  \tilde Y_{hi}^T  \, D_{\omega_j} \tilde \nu_h = 
		D_{\omega_j} \tilde \nu_h^T  \,  \tilde Y_{hi} = \left( D_\omega^T\tilde \nu_h  \, \,  \tilde Y_h \right)_{ij},
		$$
		that is
		$$
		\tilde Y_h^T \, D_\omega \tilde \nu_h = D_\omega^T \tilde \nu_h  \, \tilde Y_h.
		$$
		We use that $D_\omega \tilde \nu_h^T = D_\omega^T \tilde \nu_h$. Besides, the vectors $D_{\omega_i}  \tilde \nu_h $, $i=1,2$, belong to the tangent space to $\tilde \Sigma_h^n$ which is  spanned by $ \tilde Y_{h1}$ and $ \tilde Y_{h2}$. Hence $D_\omega \tilde \nu_h$ coincides with its projection on that space, that is 
		$$
		D_\omega \tilde \nu_h=  \tilde Y_h \left(  \tilde Y_h^T \,  \tilde Y_h \right)^{-1} \,  \tilde Y_h^T \, D_\omega \tilde \nu_h. 
		$$
		Using the above facts, we obtain the first equality in \eqref{tildegh}. The second equality can be derived in the same manner.
		
		Formula \eqref{tildegh} readily gives  that 
		\begin{align*}
			\det \tilde g_h (z) &= 
			\det \tilde g_h \left( 1 - z \, {\rm trace } (\tilde A_h ) + z^2 \det \tilde A_h \right)^2
		\end{align*}
		and hence from \eqref{curvas},
		\begin{align*}
			H_{\tilde \Sigma_h^n} &=-\,  {\rm trace} (\tilde A_h) \quad {\mbox {and}} \quad H_{\Sigma_h^n} =-\,  {\rm trace} ( A_h). 
		\end{align*}
		Thus the relation between  the mean curvatures of $\tilde \Sigma^n_h$ and $\Sigma^n_h$ 
		follows from comparing the trace of the matrix $\tilde A_h = -  \left(  \tilde Y_h^T \,  \tilde Y_h \right)^{-1}  \,  \tilde Y_h^T 
		\,  D_\omega \tilde \nu_h$ with the trace of the matrix $A_h = -  \left(  Y_h^T \,   Y_h \right)^{-1}  \,   Y_h^T 
		\,  D_\omega  \nu_h$.
		
		We recall that the parametrization $\tilde y_h$ of $\tilde \Sigma^n_h$ is given in terms of the parametrization  $ y_h$ of $ \Sigma^n_h$ through the map $X$ defined in \eqref{defX}, that is 
		$\tilde y_h (\omega) = ( X \circ y_h ) \, (\omega)$. Hence
		\begin{align*}
			\tilde Y_h = DX \, Y_h , \quad {\mbox {and}} \quad \tilde Y_h^T  \, \tilde Y_h &=Y_h^T  \, (D^TX \, DX )\, Y_h.
		\end{align*}
		A straightforward computation yields:
		\be \label{defQandM}
		\begin{aligned}
			D X &= Q+ {y_2 \over R} \, M , \quad {\mbox {where}} \quad   \\
			Q&= \left[\begin{matrix} 1&0&0\\
				0& \cos ({y_3 \over R}) & -  \sin ({y_3 \over R}) \\
				0& \sin ({y_3 \over R}) &  \cos ({y_3 \over R}) 
			\end{matrix} \right] , \quad M= \left[\begin{matrix} 0&0&0\\
				0& 0 & - \sin ({y_3 \over R}) \\
				0& 0 &  \cos ({y_3 \over R}) 
			\end{matrix} \right]
		\end{aligned}
		\ee
		and
		\be \label{matC}
		DX^T \, DX = I + \left( 2 {y_2 \over R} + {y_2^2 \over R^2} \right) \, C, \quad C= \left[\begin{matrix} 0&0&0\\
			0& 0 & 0 \\
			0& 0 &  1
		\end{matrix} \right].
		\ee
		An interesting characteristic of the matrix $M$ that we will use later  is that
		\be \label{matrixM}
		M \left[\begin{matrix} a\\
			b\\
			c
		\end{matrix} \right]= c \left[\begin{matrix} 0\\
			- \sin ({y_3 \over R}) \\
			\cos ({y_3 \over R}) 
		\end{matrix} \right], \quad M^T \left[\begin{matrix} a\\
			b\\
			c
		\end{matrix} \right]= \left[\begin{matrix} 0\\
			0 \\
			- \sin ({y_3 \over R}) b +\cos ({y_3 \over R}) c 
		\end{matrix} \right]
		\ee
		Thus 
		\begin{align*}
			\tilde Y_h^T \, \tilde Y_h &= Y_h^T \, \left( I + \left( 2 {y_2 \over R}  + {y_2^2 \over R^2} \right)  C \right)  \, Y_h \\
			&= Y_h^T \, Y_h + \left( 2 {y_2 \over R}  + {y_2^2 \over R^2} \right) \, Y_h^T  \, C \,Y_h
		\end{align*}
		and
		\be \label{tildeghtogh}
		\begin{aligned}
			\left(  \tilde Y_h^T \, \tilde Y_h\right)^{-1}  &= \, \left( I + \left( 2 {y_2 \over R}  + {y_2^2 \over R^2} \right)  \,  g_h^{-1}  \,  Y_h^T \, C \, Y_h \right)^{-1} \, g_h^{-1} ,  \\
			\quad & {\mbox {where}} \quad g_h =  \, Y_h^T \, Y_h 
		\end{aligned}
		\ee
		and $C$ is given by \eqref{matC}.
		This is telling that
		\begin{align*}
			\tilde A_h &=- \left( I + \left( 2 {y_2 \over R}  + {y_2^2 \over R^2} \right)  \, g_h^{-1} \,  Y_h^T \, C \, Y_h  \right)^{-1} \, g_h^{-1} \, Y_h^T \, (D \, X )^T D_\omega \tilde \nu_h .
		\end{align*}
		Now we look for an expression of $D_\omega \tilde \nu_h$ in terms of $D_\omega \nu_h$. From the relation 
		$\tilde Y_h =DX \, Y_h $, we use \eqref{defQandM} and \eqref{matrixM} to compute
		\be \label{Zh}
		\begin{aligned}
			\tilde Y_{h1}  \times \tilde Y_{h2}  &= Q Y_{h1}  \times Q Y_{h2}  + {y_2 \over R} Z_h\\
			& \quad \quad {\mbox {where}} \\
			Z_h &= M \, Y_{h1} \times Q \, Y_{h2}
			+ Q \, Y_{h1}  \times M \, Y_{h2} .
		\end{aligned}
		\ee
		and
		\begin{align*}
			\|    & \tilde Y_{h1} \times \tilde Y_{h2} \|^{-1} =  \|   Y_{h1} \times Y_{h2} \|^{-1} \times\\
			& \left( 1+ 2 {y_2 \over R} { Q (Y_{h1} \times Y_{h2} ) \cdot Z_h \over \| Y_{h1} \times Y_{h2} \|^2  }  +{y_2^2 \over R^2} {\| Z_h \|^2 \over \| Y_{h1} \times Y_{h2}  \|^2} \right)^{-{1\over 2}}.
		\end{align*}
		Hence we explicitely obtain that
		\be \label{tildenuhtonuh}
		\tilde \nu_h = Q \, \nu_h + \nu_h^{(1)}
		\ee
		where
		\begin{align*}
			\nu_h^{(1)} &= Q \, \nu_h \left[\left( 1+ 2 {y_2 \over R} { Q (\nu_h ) \cdot Z_h \over \| Y_{h1} \times Y_{h2} \|  }  +{y_2^2 \over R^2} {\| Z_h \|^2 \over \| Y_{h1} \times Y_{h2}  \|^2} \right)^{-{1\over 2}}  -1 \right] \\
			&+{y_2 \over R} {Z_h \over \|Y_{h1}  \times Y_{h2} \|}  \left( 1+ 2 {y_2 \over R} { Q (\nu_h ) \cdot Z_h \over \| Y_{h1} \times Y_{h2} \|  }  +{y_2^2 \over R^2} {\| Z_h \|^2 \over \| Y_{h1} \times Y_{h2}  \|^2} \right)^{-{1\over 2}}.
		\end{align*}
		Later we  will analyze $\nu_h^{(1)}$ in detail.
		For the moment we observe that
		\begin{align*}
			D_\omega \tilde \nu_h &= Q D_\omega \nu_h + ( D_\omega Q ) \nu_h + D_\omega \nu_h^{(1)}
		\end{align*}
		and since  $Q^T Q=I$, we obtain
		\be\label{tildeto}
		\begin{aligned}
			D^TX \, D_\omega \tilde \nu_h  &= (Q^T +{y_2 \over R} M^T ) \,  \left( Q D_\omega   \nu_h + ( D_\omega Q ) \nu_h + D_\omega \nu_h^{(1)} \right)  \\
			&= D_\omega   \nu_h  +V_h \\
			& \quad \quad {\mbox {where}}\\
			V_h&= Q^T  D_\omega \nu_h^{(1)} + Q^T \, (D_\omega Q) \nu_h + {y_2 \over R} M^T D_\omega \tilde \nu_h.
		\end{aligned}
		\ee
		We insert the computations  \eqref{tildeghtogh}, \eqref{tildeto} in the definition of $\tilde A_h$, see \eqref{tildegh}, 
		and we get
		\be \label{tildeAhtoAh}
		\tilde A_h = -  \, g_h^{-1} \, Y_h^T \, D_\omega \nu_h + A_{n,h}^{(1)}
		\ee
		where
		\begin{align*}
			A_{n,h}^{(1)} = &- g_h^{-1} \, Y_h^T\,  V_h 
			\\&- \left[ \left( I + \left( 2 {y_2 \over R}  + {y_2^2 \over R^2} \right)  \, g_h^{-1} \,  Y_h^T \, C \, Y_h  \right)^{-1}  - I \right]  \, g_h^{-1} \, Y_h^T \, (DX)^T \, D_\omega \nu_h ,
		\end{align*}
		with $V_h$ given in \eqref{tildeto}.
		
		\medskip
		Since $H_{\Sigma_h^n} = - \, {\rm trace} (A_h)$, with $A_h= -  \, g_h^{-1} \, Y_h^T \, D_\omega \nu_h$, we get 
		that
		\be \label{tildetracetotrace}
		\tilde H_{\tilde \Sigma^n_h } (y) =  H_{ \Sigma^n_h } (y) + H^{(1)}_{n,h} (y) , \quad {\mbox {where}} \quad H^{(1)}_{n,h} = - \, {\rm trace}  \, A_{n,h}^{(1)}.
		\ee
		
		\medskip
		The rest of the proof of Lemma \ref{prop31} is devoted to analyzing $H^{(1)}_{n,h} $.
		
		\medskip
		Let $A_n^{(1)}$ be the matrix 
		$A_{n,h}^{(1)}$ in \eqref{tildeAhtoAh} taking $h=0$.
		We claim that
		\be \label{asma}
		\begin{aligned}
			{\rm trace} \, A_n^{(1)} &=  - {\sin \theta \over R} \Biggl(    { (2-(f')^2 ) \, f \, f'' \over (1+ (f')^2 )^{5\over 2}} +{1 + 3 (f')^2 \over (1+ (f')^2 )^{3\over 2}} \Biggl)+ {a(y)\over n^2}
		\end{aligned}
		\ee
		where  $a(y)$ is a smooth function of $y$, uniformly bounded with its derivatives, as $n \to \infty$, and it satisfies the symmetries \eqref{simh}.
		Besides
		\be \label{H1}
		{\rm trace} \, A_{n,h}^{(1)} = {\rm trace} \, A_n^{(1)} + O\left( {1\over n} \right)  \, \ell (h , D_\omega h, D^2_\omega h ), \quad {\mbox {as }} n \to \infty
		\ee
		where $\ell $ is a smooth function in $\omega \in \Lambda$ (see \eqref{defLambda}), which is uniformly bounded, with its derivatives, as $n \to \infty$, and it satisfies the symmetries \eqref{simh}. Besides $\ell (0,0,0) = 0$, $\nabla \ell (0,0,0) \not= 0$.
		
		\medskip
		The rest of this proof is devoted to establish the validity of \eqref{asma} and \eqref{H1}.
		
		We start with \eqref{asma}.
		We  write
		\begin{align*}
			A_{n}^{(1)} &= A_{n1} + A_{n2} \\
			A_{n1}&=- g^{-1} \, Y^T\,  V 
			\\
			A_{n2}=&- \left[ \left( I + \left( 2 {y_2 \over R}  + {y_2^2 \over R^2} \right)  \, g^{-1} \,  Y^T \, C \, Y  \right)^{-1}  - I \right]  \, g^{-1} \, Y^T \, (DX)^T \, D_\omega  \nu .
		\end{align*}
		To simplify notation, we wrote $V$ for the function $V_h$ as in \eqref{tildeto} for $h=0$.
		
		\medskip
		We first compute the trace of  $ A_{n2}$. 
		Since
		\be \label{g}
		g= \left[\begin{matrix} f^2&0\\
			0 &  1 + (f')^2
		\end{matrix} \right], \quad g^{-1} = {1\over f^2 (1+ (f')^2) } \left[\begin{matrix} 1 + (f')^2 &0\\
			0 &  f^2
		\end{matrix} \right]
		\ee
		we use the explicit definition of the matrix  $C$ \eqref{matC} to obtain
		$$
		g^{-1} \, Y^T \, C \, Y = {1\over 1+ (f')^2} \left[\begin{matrix} 0 &0\\
			0 &  1
		\end{matrix} \right].
		$$
		Hence
		\begin{align*}
			\left( I + \left( 2 {y_2 \over R}  + {y_2^2 \over R^2} \right)  \, g^{-1} \,  Y^T \, C \, Y  \right)^{-1} &= \left[\begin{matrix} 1 &0\\
				0 & \left[ 1+ \left( 2 {y_2 \over R}  + {y_2^2 \over R^2} \right) {1\over 1+ (f')^2}  \right]^{-1}
			\end{matrix} \right]   ,
		\end{align*}
		and
		\begin{align*}
			\left( I + \left( 2 {y_2 \over R}  + {y_2^2 \over R^2} \right)  \, g^{-1} \,  Y^T \, C \, Y  \right)^{-1} &-I = \left[\begin{matrix} 0 &0\\
				0 & \alpha (y)
			\end{matrix} \right]   \\
			\alpha (y) &= -{ \left( 2 {y_2 \over R}  + {y_2^2 \over R^2} \right) {1\over 1+ (f')^2} \over  1+ \left( 2 {y_2 \over R}  + {y_2^2 \over R^2} \right) {1\over 1+ (f')^2}  }.
		\end{align*}
		Observe now
		\begin{align*}
			{\rm trace} (A_{n2} )&=-   {\rm trace} \left( \left[\begin{matrix} 0 &0\\
				0 & \alpha (y)
			\end{matrix} \right]  \, g^{-1} \, Y^T \, (DX)^T \, D_\omega  \nu \right).
		\end{align*}
		Letting 
		$$
		g^{-1} \, Y^T \, (DX)^T \, D_\omega  \nu = \left[\begin{matrix} a &b\\
			c & d
		\end{matrix} \right]
		$$
		we get that
		\begin{align*}
			{\rm trace} (A_{n2} )&=-   \alpha (y) \, d.
		\end{align*}
		Thus we proceed with the computation of the element $(2,2)$ of the matrix $ g^{-1} \, Y^T \, (DX)^T \, D_\omega  \nu.$ Using again \eqref{g} we obtain that
		$$
		d= 
		{ Y_2 \cdot \left( (DX)^T D_{y_3} \nu \right) \over 1+ (f')^2},$$
		where $Y_2$ is given by \eqref{tangetv}. A direct computation gives
		\be \label{dnormal}
		\begin{aligned}
			D_1 \nu &= {1\over \sqrt{1+ (f')^2} } \left[ \begin{matrix} -\sin \theta \\ \cos \theta  \\0  \end{matrix}\right] , \quad \\
			D_2 \nu &= \left({1\over \sqrt{1+ (f')^2} }\right)' \left[\begin{matrix} \cos \theta  \\ \sin  \theta  \\-f'  \end{matrix}\right] - {f'' \over \sqrt{1+ (f')^2}} \left[\begin{matrix} 0 \\ 0  \\1\end{matrix}\right]
		\end{aligned}
		\ee
		as consequence of \eqref{normal}, and 
		\begin{align*}
			Y_2 \cdot \left( (DX)^T D_{y_3} \nu \right)&= -{f'' \over \sqrt{1+ (f')^2}}    \\
			&+ (\cos {y_3 \over R} -1 ) \left( {f' \over (\sqrt{1+ (f')^2})'} \sin^2 \theta - ({f' \over \sqrt{1+ (f')^2}})' (1+ {y_2 \over R}) \right)\\
			&- \sin {y_3 \over R} \, \sin \theta \, \left( f'  ({f' \over \sqrt{1+ (f')^2}})'+  {1 \over (\sqrt{1+ (f')^2})'} (1+ {y_2 \over R}) \right).
		\end{align*}
		Combining these computations with the previous ones we conclude that
		\be \label{tracea2}
		{\rm trace} (A_{n2} )= - 2 {y_2 \over R} \,  {f'' \over (1+ (f')^2 )^{5\over 2}} + {a_2(y)\over n^2} \quad {\mbox {as }} n \to \infty,
		\ee
		where $a_2$ is a smooth function, uniformly bounded as $n \to \infty$ which satisfies the symmetry assumptions \eqref{simh}.

		\medskip
		We now treat the trace of $A_{n1}$. From \eqref{g} we observe that
		$$
		{\rm trace} \left (g^{-1} \left[\begin{matrix} a &b\\
			c &  d
		\end{matrix} \right] \right) =  {1\over f^2 (1+ (f')^2) } \left( (1+ (f')^2) \, a + f^2 \, d \right).
		$$
		This suggests that 
		\be \label{traceAn1}
		\begin{aligned}
			{\rm trace} \, A_{n1} &= -{1\over f^2 (1+ (f')^2) } \left( (1+ (f')^2) \, a + f^2 \, d \right)\\
			a&= Y_1 \cdot V_1 , \quad d= Y_2 \cdot V_2
		\end{aligned}
		\ee
		where $V_1$ and $V_2$ are the first and second columns of the $3\times 2$ matrix 
		$$
		V=  Q^T  D_\omega \nu^{(1)} + Q^T \, (D_\omega Q) \nu + {y_2 \over R} M^T (  D_\omega \nu + ( D_\omega Q ) \nu + D_\omega \nu^{(1)}) .
		$$
		In order to find $V_1$ and $V_2$, we observe that 
		$$
		{y_2 \over R} M^T (   ( D_\omega Q ) \nu + D_\omega \nu^{(1)})  = O({1\over n^2}) \quad {\mbox {as }} n \to \infty.
		$$
		We thus turn to analyze 
		$Q^T  D_\omega \nu^{(1)} + {y_2 \over R} M^T   D_\omega \nu + Q^T \, (D_\omega Q) \nu $. Calling $V_j^\#$, $V_j^*$ and $V_j^{**}$ the columns respectively of $Q^T  D_\omega \nu^{(1)}$,
		${y_2 \over R} M^T   D_\omega \nu $ and $ Q^T \, (D_\omega Q) \nu_h$, we have that
		$$
		V_j = V_j^\# + V_j^* +V_j^{**}+O({1\over n^2}) \quad j=1,2, \quad {\mbox {as }} n \to \infty.
		$$
		We start from ${y_2 \over R} M^T   D_\omega \nu$.
		From \eqref{dnormal} 
		we get the columns $V_j^*$ of ${y_2 \over R} M^T   D_\omega \nu$ are
		\be \label{Cj*}
		V_1^* = O({1\over n^2}) , \quad V_2^* = - {y_2 \over R} \left( {f' \over \sqrt{1+ (f')^2}} \right)' \, \left[\begin{matrix} 0 \\ 0  \\1\end{matrix}\right] + O({1\over n^2}) \quad {\mbox {as }} n \to \infty.
		\ee
		Next we consider $ Q^T \, (D_\omega Q) \nu$. From \eqref{defQandM} we easily check that
		\be \label{Cj**}
		V_1^{**} = O({1\over n^2}) , \quad V_2^{**} =  {1\over R} {1\over \sqrt{1+ (f')^2}}  \, \left[\begin{matrix} 0 \\ f'  \\ \sin \theta \end{matrix}\right] + O({1\over n^2}) \quad {\mbox {as }} n \to \infty.
		\ee
		We turn now to $Q^T  D_\omega \nu^{(1)}$ and its columns $V_j^\#$.
		From \eqref{tildenuhtonuh} we get that
		$$
		\nu^{(1)} = {y_2 \over R} {1\over \| Y_1 \times Y_2 \|} \left[ Z - (\nu \cdot Z) \nu \right] + O({1\over n^2}) \quad {\mbox {as }} n \to \infty
		$$
		where $Z$ is the vector field $Z_h$ introduced in \eqref{Zh} taking $h=0$. Using \eqref{matrixM} and \eqref{tangetv}, a close inspection to $Z$ gives 
		$$
		Z= Y_1 \times \left[ \begin{matrix} 0 \\ 0 \\1  \end{matrix}\right] + O({1\over n}) = f \left[ \begin{matrix} \cos \theta  \\ \sin \theta  \\0  \end{matrix}\right]  + O({1\over n}), \quad {\mbox {as }} n \to \infty.
		$$
		Since $\| Y_1 \times Y_2 \| = f \sqrt{1+ (f')^2}$, $y_2 = f \sin \theta$ and $ \nu \cdot Z = {f \over \sqrt{1+ (f')^"}} + O({1\over n})$, we get
		\begin{align*}
			\nu^{(1)} &= { \sin \theta   \over R} v + O({1\over n^2}), \quad v= {f \over \sqrt{1+ (f')^2} }\left(    \left[ \begin{matrix} \cos \theta  \\ \sin \theta  \\0  \end{matrix}\right]  -{\nu \over \sqrt{1+ (f')^2} }    \right) .
		\end{align*}
		We get that the columns of $Q^T  D_\omega \nu^{(1)}$ are
		\be \label{Cj}
		\begin{aligned}
			V_1^\# := Q^T \, D_1 \nu^{(1)} &= { \cos \theta \over R} v + {\sin \theta    \over R} D_1 v + O({1\over n^2})\\
			V_2^\# := Q^T \, D_2 \nu^{(1)} &=  {\sin \theta \over R} D_2 v +  O({1\over n^2})
		\end{aligned}
		\ee
		as $n \to \infty$.
		We recall that $D_1$ stands for $D_\theta$, and $D_2$ for $D_{x_3} = \, '$.
		From \eqref{Cj*}-\eqref{Cj**}-\eqref{Cj} and using the fact that $Y_1 \cdot v=0$ we obtain that
		\begin{align*}
			Y_1 \cdot V_1 &= {\sin \theta   \over R} \, Y_1 \cdot D_1 v + O({1\over n^2})\\
			Y_2 \cdot V_2 &= - {y_2 \over R} \left( {f' \over \sqrt{1+ (f')^2}} \right)' + {\sin \theta \over R} \, \sqrt{1+ (f')^2}  \\
			& + {\sin \theta  \over R} Y_2 \cdot D_2 v + O({1\over n^2})
		\end{align*}
		as $n \to \infty$.
		It is straightforward to check that
		\begin{align*}
			D_1 v &= {f \, (f')^2 \over (1+ (f')^2 )^{3\over 2}  }      \left[ \begin{matrix} - \sin  \theta  \\ \cos \theta  \\0  \end{matrix}\right]   \\
			D_2 v &= \left( {f \over \sqrt{1+ (f')^2} }  \right)' \left[ \begin{matrix} \cos  \theta  \\ \sin \theta  \\0  \end{matrix}\right]
			- \left( {f \over 1+ (f')^2}   \right)' \nu 
			-   {f \over 1+ (f')^2}   D_2 \nu 
			.
		\end{align*}
		With these computations at hand and using that $y_2 = f \, \sin \theta$, we conclude that
		\begin{align*}
			Y_1 \cdot V_1 &= {\sin \theta  \over R} \,{ f^2 \, (f')^2 \over (1+ (f')^2 )^{3\over 2}  }   \, + O({1\over n^2})\\
			Y_2 \cdot V_2 &= - {y_2 \over R} \left( {f' \over \sqrt{1+ (f')^2}} \right)'  + {\sin \theta \over R} \, \sqrt{1+ (f')^2} \\
			&+ {\sin \theta  \over R} \, f' \, \left( {f \over \sqrt{1+ (f')^2} }  \right)' + {\sin \theta \over R} {f \, f'' \over (1+ (f')^2 )^{3\over 2}}  + O({1\over n^2})\\
			&= {\sin \theta \over R} \Biggl[ - { (f')^2 \, f \, f'' \over (1+ (f')^2 )^{3\over 2}} +{1 + 2 (f')^2 \over (1+ (f')^2 )^{1\over 2}} \Biggl]  + O({1\over n^2})
		\end{align*}
		as $n \to \infty$.
		From \eqref{traceAn1} we conclude that
		\be\label{uurca}
		{\rm trace} \, A_{n1} =
		-{\sin \theta \over R} \, \Biggl(    -{  (f')^2 \, f \, f'' \over (1+ (f')^2 )^{5\over 2}} +{1 + 3 (f')^2 \over (1+ (f')^2 )^{3\over 2}} \Biggl) + {a_1 (y)\over n^2}
		\ee
		where  $a_1(y)$ is a smooth function of $y$, uniformly bounded with its derivatives, as $n \to \infty$.
		Combining this result with \eqref{tracea2}
		we get
		\begin{align*}
			{\rm trace} \, A_n^{(1)} &=  - {\sin \theta \over R} \Biggl(    { (2-(f')^2 ) \, f \, f'' \over (1+ (f')^2 )^{5\over 2}} +{1 + 3 (f')^2 \over (1+ (f')^2 )^{3\over 2}} \Biggl)+ {a (y)\over n^2},
		\end{align*}
		where  $a(y)$ is a smooth function of $y$, uniformly bounded with its derivatives, as $n \to \infty$.
		To complete the proof of \eqref{asma} we now show that the function $a_1(y)$ in \eqref{uurca} satisfies the symmetries \eqref{simh}. We see	from \eqref{uurca} that this is consequence of the fact that
		$$
		y \to {\rm trace} \, A_{n1} (y)
		$$
		satisfies \eqref{simh}. Going back to formula \eqref{traceAn1}, we shall prove that the functions in \eqref{traceAn1}
		$$
		a = a(\theta, y_3), \quad d= d(\theta, y_3)
		$$
		are even in $\theta$, even in $y_3$ and periodic in $y_3$ of period $T$. The periodicity in $y_3$ follows from the fact that all functions involved in the definition of $a$ and $d$ are $T$-periodic in $y_3$.
		The remaining symmetries can be verified checking that the evenness of the components of the columns $V_1$ and $V_2$ in \eqref{traceAn1} have the form
		$$
		V_1= \left[ \begin{matrix} {\mbox { even in }} y_3 , {\mbox { even in }} \theta \\
			{\mbox { even in }} y_3, {\mbox { odd in }} \theta \\  {\mbox {any function}} \end{matrix} \right] , \quad V_2= \left[ \begin{matrix} {\mbox { odd in }} y_3 , {\mbox { odd in }} \theta \\
			{\mbox { odd in }} y_3, {\mbox { even in }} \theta \\ {\mbox { even in }} y_3, {\mbox { even in }} \theta  \end{matrix} \right].
		$$
		We check this for $V_1$, which is given by
		$$
		V_1= Q^T D_\theta \nu^{(1)} + {y_2 \over R} M^T D_\theta \tilde \nu.
		$$
		Since the first two component of $M^T D_\theta \tilde \nu$ are zero (because of \eqref{matrixM}), we just need to check $Q^T D_\theta \nu^{(1)} $. We have
		$$
		\nu^{(1)} =  \left[ \begin{matrix} {\mbox { even in }} y_3 , {\mbox { odd in }} \theta \\
			{\mbox { odd in }} y_3, {\mbox { even in }} \theta \\  {\mbox { even in }} y_3 , {\mbox { even in }} \theta \end{matrix} \right], \quad D_\theta \nu^{(1)} = \left[ \begin{matrix} {\mbox { even in }} y_3 , {\mbox { even in }} \theta \\
			{\mbox { odd in }} y_3, {\mbox { odd in }} \theta \\  {\mbox { even in }} y_3 , {\mbox { odd in }} \theta \end{matrix} \right]
		$$
		and
		$$
		Q^T  D_\theta \nu^{(1)} = \left[ \begin{matrix} {\mbox { even in }} y_3 , {\mbox { even in }} \theta \\
			{\mbox { even in }} y_3, {\mbox { odd in }} \theta \\  {\mbox { odd in }} y_3 , {\mbox { odd in }} \theta \end{matrix} \right],
		$$
		which is what expected.
		
		For $V_2$, we have
		$$
		V_2 = Q^T (D_{y_3} Q) [\nu] +Q^T D_{y_3} \nu^{(1)} + {y_2 \over R} M^T D_{y_3} \tilde \nu.
		$$
		The desired form of $V_2$ follows from the fact that $Q\nu$, $\tilde \nu$ and $\nu^{(1)}$ have the same structure
		$$
		\left[ \begin{matrix} {\mbox { even in }} y_3 , {\mbox { odd in }} \theta \\
			{\mbox { odd in }} y_3, {\mbox { even in }} \theta \\  {\mbox { even in }} y_3 , {\mbox { even in }} \theta \end{matrix} \right].
		$$
		This completes the proof of \eqref{asma}.

		\medskip We now turn to \eqref{H1}.
		We observe that
		\begin{align*}
			A_{n,h}^{(1)} = & A_n^{(1)} - B_1 - B_2 \\
			B_1&=  g_h^{-1} \, Y_h^T\,  V_h - g^{-1} \, Y^T\,  V 
			\\
			B_2&= \left[ \left( I + \left( 2 {y_2 \over R}  + {y_2^2 \over R^2} \right)  \, g_h^{-1} \,  Y_h^T \, C \, Y_h  \right)^{-1}  - I \right]  \, g_h^{-1} \, Y_h^T \, (DX)^T \, D_\omega \nu_h \\
			&- \left[ \left( I + \left( 2 {y_2 \over R}  + {y_2^2 \over R^2} \right)  \, g^{-1} \,  Y^T \, C \, Y  \right)^{-1}  - I \right]  \, g^{-1} \, Y^T \, (DX)^T \, D_\omega \nu .
		\end{align*}
		We further split
		\begin{align*}
			B_1&= g_h^{-1} \, Y_h^T\,  (V_h - V) \,  + \, g_h^{-1} \, (Y_h^T - Y^T) \,  V \, + \, (g_h^{-1} - g^{-1} ) \, Y^T\,  V 
		\end{align*}
		and
		\begin{align*}
			B_2 &= \left[ \left( I + \left( 2 {y_2 \over R}  + {y_2^2 \over R^2} \right)  \, g_h^{-1} \,  Y_h^T \, C \, Y_h  \right)^{-1}  - I \right]  \\
			&\times   \, \left( g_h^{-1} \, Y_h^T \, (DX)^T \, D_\omega \nu_h -  g^{-1} \, Y^T \, (DX)^T \, D_\omega \nu \right) \\
			&+ \left[ \left( I + \left( 2 {y_2 \over R}  + {y_2^2 \over R^2} \right)  \, g_h^{-1} \,  Y_h^T \, C \, Y_h  \right)^{-1} - \left( I + \left( 2 {y_2 \over R}  + {y_2^2 \over R^2} \right)  \, g^{-1} \,  Y^T \, C \, Y  \right)^{-1}   \right]\\
			& \times \, g^{-1} \, Y^T \, (DX)^T \, D_\omega \nu .
		\end{align*}
		We claim that
		\be\label{1}
		\begin{aligned}
			{\rm trace} (B_1 + B_2)&= {1\over n} \,  \ell (h, D_\omega h, D_\omega^2 h)  , \quad {\mbox {as}} \quad n \to \infty  
		\end{aligned}
		\ee
		where $\ell $ is a smooth function in $\omega \in \Lambda$ (see \eqref{defLambda}), which is uniformly bounded, together with its derivatives, as $n \to \infty$. Besides $\ell (0,0,0) = 0$, $\nabla \ell (0,0,0) \not= 0$.
		
		This fact follows is consequence of 
		\begin{align*}
			Y_h &= Y + h D_\omega \nu + \nu^T D_\omega h,\\
			g_h &= g + h \, \left( Y^T D_\omega \nu + D_\omega \nu^T Y \right) + Y^T \nu^T D_\omega h + D_\omega h^T \nu Y\\ 
			&+ h^2 D_\omega \nu^T D_\omega \nu + h D_\omega \nu^T \nu^T D_\omega h + h D_\omega h^T \nu D_\omega \nu + D_\omega h^T \nu \nu^T D_\omega h, \\
			&\quad {\mbox {and}} \\
			\nu_h &= \nu + \vec \ell_1 (h, D_\omega h)
		\end{align*}
		where $\vec \ell_1$ is an explicit vector, which is a smooth function of $\omega$,  such that
		$\vec \ell_1 (0, 0) =0 $, $D \vec \ell_1 (0, 0) \not=0.$
		Using that $2\, \pi \, R = n \, T$, from \eqref{tildenuhtonuh} and \eqref{tildeto} we obtain
		\begin{align*}
			\nu_h^{(1)} &= {1\over n} \, \nu^{(1)}_* + {1\over n} \vec \ell_1 (h, D_\omega h)\\
			V_h &= {1\over n} \, V_* + {1\over n} \vec \ell_2 (h, D_\omega h, D_\omega^2 h)\\
		\end{align*}
		with $\nu^{(1)}_*$ and $V_*$ smooth functions of $\omega$, which are uniformly bounded (with their derivatives) as $n \to \infty$; $\vec \ell_1$ has the same properties as before, and $\vec \ell_2$ is an explicit vector, which is a smooth function of $\omega$,  such that
		$\vec \ell_2 (0, 0,0) =0 $, $D \vec \ell_2 (0, 0,0) \not=0.$
		
		Combining these expansions  with \eqref{H1}, we obtain \eqref{expH*}. We conclude with the remark that
		$y \to \ell (h, D_\omega h , D_\omega^2 h) (y)$ in \eqref{1} satisfies \eqref{simh}, as consequence of Remark \ref{rem1} and of \eqref{asma}-\eqref{tildetracetotrace}.
		
	\end{proof}

	\begin{proof}[Proof of Lemma \ref{prop32}] \ \ 
		Recall that
		\be \label{Ah1}
		H_{\Sigma_h^n} = -\,  \hbox{trace}\, (A_h) , \quad {\mbox {where}} \quad  A_h= -g_h^{-1} Y_h^T B_h . 
		\ee
		We refer to \eqref{defg} for $g_h$, to \eqref{defY} for $Y_h$ and \eqref{defB} for $B_h$.

		Recall the parametrization of  surface $\Sigma_h^n$ given by 
		$$  y_h(w) = y(w) + h(w) \nu(w), \quad  $$ 
		and 
		$$
		Y_h =  D_w (y  + h \nu ) = Y+ B h  +   \nu D_\omega h, \quad B= D_\omega \nu .  
		$$
		The metric $g_h$ on $\Sigma_h^n$ is thus given by 
		\be \begin{aligned} g_h =   &  ( Y+ B h  +   \nu D_\omega h  ) ^T  (Y+ Bh + \nu D_\omega h )\\ 
			&=  Y^TY  +   h ( B^TY  + Y^TB ) +  h^2 B^TB\\
			&+
			Y^T \nu D_\omega h  +   D_\omega h^T \nu^T Y  +  D_\omega h^T \nu^T \nu D_\omega h   \end{aligned}\label{cc}\ee
		Recall that $\nu^T \nu =1$. The relations $\nu^T Y_i =0 = \nu^T Y_j $ imply   
		$$\nu^T Y = Y^T\nu =0,$$ and after 
		differentiation,   
		$$  \nu^T  \pp_j Y_i   +   B_j^T Y_i= 0=\nu^T\pp_i Y_j +  B_i^T Y_j  .  $$
		Since  $\pp_j Y_i= \pp_i Y_j$, 
		we get 
		$
		(B^T Y)_{ij}  = B_j^T Y_i =  B_i^T Y_j  = (B^T Y)_{ji}.
		$
		Thus the matrix  $B^T Y$ is symmetric and therefore 
		$$  B^TY  = Y^TB . $$ 
		We observe that the vectors $B_i$ belong to the tangent space, which is spanned by the $Y_i's$. Hence for a column vector $\alpha_j $  we have 
		$
		B_i  =    Y \alpha_{i} . 
		$
		Hence $ \alpha_i =    (Y^T Y)^{-1}   Y^T B_i$. It follows that 
		\be 
		B  =     Y (Y^T Y)^{-1}   Y^T B. 
		\label{B} \ee
		Using these relations in \eqref{cc} we obtain 
		$$\begin{aligned}
			g_h  =&  Y^TY  +   2h Y^TB  + h^2B^TB+     D_\omega h^T D_\omega h \\ =&  
			g(0) \left [   I  +  2h (Y^TY)^{-1}  Y^TB    +   h^2   (Y^TY)^{-1}B^TB \right ]+     D_\omega h^T D_\omega h.
		\end{aligned}   $$
		We observe that 
		$$\begin{aligned}
			[ (Y^TY)^{-1}  Y^TB]^2  = &  (Y^TY)^{-1}  Y^TB (Y^TY)^{-1}  Y^TB\\ =&
			(Y^TY)^{-1}  B^TY (Y^TY)^{-1}  Y^TB = 
			(Y^TY)^{-1}  B^TB, 
		\end{aligned}$$
		and hence 
		\begin{equation}\label{defgh}
			g_h =   g(0) [ I- hA ]^2 \left( I   + \left( g(0) [ I- hA ]^2 \right)^{-1}   D_\omega h^T D_\omega h \right) .
		\end{equation}
		where $A$ is the matrix
		$$
		A =  - [\:Y^TY]^{-1} Y^T B.
		$$
		Next we compute an expansion in $h$ for the normal vector to $\Sigma_h^n$, $$\nu_h =   \frac {Y_{1h} \times Y_{2h}} {|Y_{1h} \times Y_{2h}|}$$   as follows. It holds  
		$$
		\begin{aligned}
			Y_{h1} \times Y_{h2}  =&(Y_1  + hB_1 + \nu  D_1 h) \times (Y_2 + hB_2 + \nu D_2 h )  \\ = &    Y_1 \times Y_2+  v (h)   + \ q (h , D_\omega h )
		\end{aligned}
		$$
		where $B_i$, $i=1,2$, are the columns of $B$, $D_i$ stands for $D_{\omega_i}$, $i=1,2$,
		$$ \begin{aligned}
			v (h) = &  D_2 h\, ( Y_1\times \nu)   + D_1 h \, (\nu\times Y_2)   + h\, [ D_2h\, (B_1\times \nu) + 
			D_1h  (\nu   \times B_2) ]   \\ =&      h\, (B_1 \times Y_2 + Y_1\times B_2 ) +  D_2h  \, (Y_1\times \nu) +  D_1 h \, (\nu\times Y_2)    
		\end{aligned}
		$$
		and $q (h, D_\omega h) $ a smooth function, with $q(0,0)= Dq (0,0)=0$ and uniformly bounded as $n \to \infty$.
		Then
		$$\begin{aligned}
			\nu_h  =& \nu  - \nu \frac {\nu\cdot v (h) } { |Y_1\times Y_2 |  } + 
			\frac { v (h) } { |Y_1\times Y_2 |  } + \hat \nu_h\\     = &  \nu  
			+ \frac 1  { |Y_1\times Y_2 |  } Y (Y^TY)^{-1} Y^T  v (h)  + \hat \nu_h .\end{aligned}
		$$
		Hence  we have 
		\be \label{Bh}
		\begin{aligned}
			B_h = D_\omega  \nu_h  =&  D_\omega \nu  +   D_\omega  \left (    \frac 1{  |Y_1\times Y_2 | } Y \, (Y^TY)^{-1} Y^T  v (h)     \right) + D_\omega \hat\nu_h\\  =&  B  + D_\omega \left (   
			\frac 1{  |Y_1\times Y_2 |} ( Y g^{-1}Y^T v (h) )\right)  + q(h, D_\omega h , D_\omega^2 h) 
		\end{aligned}
		\ee
		where $q (h, D_\omega h , D_\omega^2 h ) $ a smooth function, with $q(0,0, 0)= Dq (0,0,0)=0$ and uniformly bounded as $n \to \infty$.
		Inserting \eqref{defgh}-\eqref{Bh} into \eqref{Ah1}, we get
		\begin{align*}
			A_h &=  A  - 2hAg^{-1}Y^TB - g^{-1} B^TB   h  \\
			&- 
			g^{-1} Y^T      D 
			\left (    \frac 1   {  |Y_1\times Y_2 |}  Y(Y^TY)^{-1} Y^T  \, v  (h)     \right)
			+ q (h, D_\omega h, D_\omega^2 h)   
		\end{align*}
		for a quadratic term $q$ with the same properties as the one in \eqref{Bh}.
		Observe now that 
		$$ 
		-A^2 =  A g^{-1}Y^TB, \quad   g^{-1}B^T B  = A^2,   
		$$
		hence 
		\begin{equation}\label {Ah}
			\begin{aligned}
				A_h &=  A + hA^2  - g^{-1} Y^T  D 
				\left (    \frac 1   {  |Y_1\times Y_2 |}  Y(Y^TY)^{-1} Y^T  v (h)     \right)\\
				&+ q(h, D_\omega h , D_\omega^2 h) .
			\end{aligned}
		\end{equation}
		On the other hand, we use the vector identity 
		$$ a\cdot (b\times c) =  \det [ a\, b\, c]  $$  
		to get 
		$$
		\begin{aligned}
			\nu\times Y_2 \cdot Y_1  = & Y_1 \times \nu \cdot Y_2  = -\sqrt{\det g }  \\\nu\times Y_2 \cdot Y_2 =& \nu \times Y_1 \cdot Y_1  = 0 ,\\
			(B_1\times Y_2 + Y_1 \times B_2) \cdot Y_1 =&  (B_1\times Y_2 + Y_1 \times B_2)\cdot Y_2 = 0,
		\end{aligned}
		$$
		and we get the simple relation  
		$$
		Y^T v (h) = - \sqrt{\det g }D^T h
		$$
		which simplifies formula \eqref{Ah} to 
		$$\begin{aligned}
			A_h =& A + hA^2  + g^{-1} Y^T D_\omega( Y g^{-1} D_\omega^T h) +  q(h, D_\omega h , D_\omega^2 h) . \\  
		\end{aligned}
		$$
		From here we obtain
		\begin{equation}\label{asma1}
			\begin{aligned}
				{\rm trace} \, A_h &=  {\rm trace} \, A + h |A|^2 + {\rm trace} \,    \left( g^{-1} Y^T D( Y g^{-1} D^T h) \right) \\
				&+ q(h, D_\omega h , D_\omega^2 h ) .
			\end{aligned}
		\end{equation}
		The function $q$ is a smooth function of $y$, uniformly bounded with its derivatives as $n \to \infty$. It depends of $h$, $D_\omega h$ and $D_\omega^2 h$ with
		$$
		q(0,0,0)=0, \quad D q (0,0,0) = 0, \quad D^2 q(0,0,0) \not= 0. 
		$$
		We now focus on the term $g^{-1} Y^T D_\omega ( Y g^{-1} D_\omega^T h) $. We drop $\omega$ from the notation $D_\omega$. We write 
		\begin{align*}
			g^{-1} Y^T D( Y g^{-1} D^T h)  &=g^{-1} \left[ D_1 D^T h \, | \, D_2 D^T h \right] \\
			&+ \left[ D_1 g^{-1}    \ |\ D_2 g^{-1}   \right] D^Th \\
			&+ 
			\left[  g^{-1} Y^T D_1Y g^{-1}D^Th  \ |\  g^{-1} 
			Y^T D_2Y  g^{-1} D^Th\right]
		\end{align*}
		A direct computation gives
		\begin{align*}
			{\rm trace } \left(  g^{-1} \left[ D_1 D^T h \, | \, D_2 D^T h \right] \right) 
			&= {\rm trace } \left( \left[\begin{matrix} g^{11} & g^{12} \\ g^{12} & g^{22} \end{matrix}\right]
			\left[\begin{matrix} D_{11} h & D_{12} h  \\ D_{12} h  & D_{22} h \end{matrix}\right] \right) \\
			&= g^{ij} D_{ij} h,
		\end{align*}
		\begin{align*}
			{\rm trace }& \left( \left[ D_1 g^{-1}   D^Th  \ |\ D_2 g^{-1}  D^Th  \right] \right) 
			\\
			&= {\rm trace } \left( \left[\begin{matrix} D_1 g^{11} & D_1 g^{12} \\ D_1 g^{12} & D_1 g^{22} \end{matrix}\right]
			\left[\begin{matrix} D_{1} h  \\ D_2 h \end{matrix} \right]  \, | \,  \left[\begin{matrix} D_2g^{11} & D_2 g^{12} \\ D_2 g^{12} & D_2 g^{22} \end{matrix}\right]
			\left[\begin{matrix} D_{1} h  \\ D_2 h \end{matrix} \right]  \right) \\
			&= D_i g^{ij} D_j h
		\end{align*}
		and
		\be 
		\begin{aligned}
			{\rm trace }\, & [g^{-1} Y^T D( Y g^{-1} D^T h) ]   \\
			&= {\rm trace }\, \left[  g^{-1} Y^T D_1Y g^{-1}D^Th  \ |\  g^{-1} 
			Y^T D_2Y  g^{-1} D^Th\right]\\ &=
			[g^{-1} Y^T D_1Y g^{-1}D^Th]_1 +  [g^{-1} 
			Y^T D_2Y  g^{-1} D^Th]_2\\& = 
			[ (g^{-1}Y^TD_1Y)_{11}  + (g^{-1}Y^TD_2Y)_{21}    ] \, [ g^{11} D_1 h + g^{12} D_2 h   ]\\  &  +
			[ (g^{-1}Y^TD_1Y)_{12}  + (g^{-1}Y^TD_2Y)_{22}    ]\,  [ g^{21} D_1 h + g^{22} D_2 h   ]. 
		\end{aligned} \label{iii}\ee 
		Since 
		\begin{align*}
			D_1Y_2 =& D_2Y_1,\quad 
			D_1Y_1\cdot Y_2 = D_1 g_{12} - \frac 12 D_2  g_{11}, \quad\\
			&
			D_2Y_2\cdot Y_1 = D_2 g_{12} - \frac 12 D_1  g_{22}, 
		\end{align*}
		we obtain 
		$$
		Y^T D_1Y = \left [ \begin{matrix}
			\frac 12 D_1 g_{11}    &   \frac 12 D_2 g_{11} \\ D_1 g_{12} - \frac 12 D_2 g_{11}   &   \frac 12 D_1 g_{22}  
		\end{matrix}   \right ]
		$$
		\medskip
		$$
		Y^T D_2Y = \left [ \begin{matrix}
			\frac 12 D_2 g_{11}    &   D_2 g_{12} - \frac 12 D_1 g_{22} \\ \frac 12 D_1 g_{22}   &   \frac 12 D_2 g_{22}  
		\end{matrix}   \right ].
		$$
		We have 
		\begin{align*}
			(g^{-1}Y^TD_1Y)_{11}  =& \frac 1{2\det g} [ g_{22}  D_1 g_{11}  + g_{12} D_2 g_{11} - 2 g_{12} D_1 g_{12}     ] \\ 
			(g^{-1}Y^TD_1Y)_{21} =&  \frac 1{2\det g} [ g_{11}  D_1 g_{22}   - g_{12} D_2 g_{11}   ]\\ 
			(g^{-1} Y^T D_1Y)_{12} = & \frac 1{2\det g} [g_{22}D_2 g_{11} - g_{12} D_1 g_{22}    ]\\ 
			(g^{-1} Y^T D_2Y)_{22} = & \frac 1{2\det g} [g_{11}D_2 g_{22} + g_{12} D_1 g_{22} -  2 g_{12} D_2 g_{12}  ].
		\end{align*}
		From the above formulas we get
		\begin{align*}
			{\rm trace }\,&  [g^{-1} Y^T D( Y g^{-1} D^T h) ]   \\
			&=  \frac 1{2\det g} \left [ 
			g_{22} D_1 g_{11}   + g_{11} D_1 g_{22}  - 2 g_{12} D_1 g_{12}    \right ] \left [    g^{11} D_1 h  +  g^{12} D_2 h   \right ]   \\
			& +   \frac 1{2\det g} \left [ 
			g_{22} D_2 g_{11}   + g_{11} D_2 g_{22}  - 2 g_{12} D_2 g_{12}    \right ]\, \left [    g^{21} D_1 h  +  g^{22} D_2 h   \right ] \\ 
			= &   \frac 12 \frac {D_1 \det g}{\det g}   \left [    g^{11} D_1 h  +  g^{12} D_2 h   \right ] 
			\ + \    \frac 12 \frac {D_2 \det g }{\det g} \,  \,  \left [    g^{21} D_1 h  +  g^{22} D_2 h   \right ].
		\end{align*}
		Using these expressions in formula \eqref{iii}, we obtain from \eqref{Ah} and \eqref{asma1},
		$$
		H_{\Sigma_h^n }  = H_{\Sigma^n }  - J_\Sigma [h]  + q(h, D_\omega h , D_\omega^2 h) 
		$$
		where 
		$$
		J_{\Sigma^n} [h]  =  \frac 1{2 \sqrt{\det g}} \pp_j (  g^{ij} \sqrt{\det g}  \pp_j h) + |A|^2 h   
		$$
		and $q(h, D_\omega h , D_\omega^2 h)$ has the same properties as in \eqref{asma1}. 
		We conclude with the remark that
		$y \to q (h, D_\omega h , D_\omega^2 h) (y)$ in satisfies \eqref{simh}, as consequence of Remark \ref{rem1}, the explicit expression of the Jacobi operator $J$ and the fact that $H_{\Sigma^n} (y) =2.$  
		This completes the proof of Lemma \ref{prop32}.

	\end{proof}

	\section{Expansion of the interaction term}
	\label{sec4}

	Let us consider a small smooth function $h(y)$ defined on $\Sigma$   and the  {\em normal graph} 
	$\Sigma^n_h$ defined as in \eqref{truncated-modified}. We also assume that the 
	perturbation $h: \Sigma \to \R$ satisfies the symmetries \eqref{simh}.

	For a point $\tilde y_h \in \tilde \Sigma^n_h = X (\Sigma^n_h)$ we want to compute the interaction term 
	$$
	\int_{\tilde \Omega^n_h} {d \tilde x \over |\tilde y_h - \tilde x| } .
	$$
	For $y \in \Sigma^n$, we write 
	\be \label{N2}
	\tilde N_{\Sigma^n_h} (y ) =
	\int_{\tilde \Omega^n_h}\dfrac{d \tilde x}{|X(y_h) -\tilde x|},\quad \tilde y_h  = X (y_h) \in \tilde \Sigma^n_h, \quad y_h \in \Sigma^n_h,
	\ee
	where
	$$
	y_h = y + h (y) \nu (y)
	$$
	and $\nu(y)$ is the  
	unit normal vector at $y\in \Sigma^n$ as in \eqref{normal}.
	
	Thanks to symmetries, it is enough to know $  \tilde N_{\Sigma^n_h} (y ) $  at points $y$ in $\Sigma_0$, the first piece  of the truncated Delaunay $\Sigma^n$ as defined in \eqref{sigma0}, that is
	$$
	\Sigma_0= \{\,  y \in \Sigma \mid -{T \over 2}  \leq y_3 < {T\over 2}  \} .
	$$
	Indeed, by construction  any point $\tilde y_h \in \tilde \Sigma^n_h$ is the rotation of a multiple of ${2\pi \over n}$ of a point $\tilde y_{0h} = X(y_{0h})$, with $y_{0h} = y_0 + h(y_0) \nu (y_0)$, $y_0 \in \Sigma_0$, see \eqref{variable}. 
	Besides, the domain of integration $\tilde \Omega^n_h$ in \eqref{N1} is invariant under rotations. Thus in this Section we will take points of the form
	\begin{equation} \label{uno}
		\begin{aligned}
			\tilde y_h &= X(y_h) , \quad y_h  = y + h(y) \nu (y) , \quad y = \left( \bar y ,  y_{3} \right) \in \Sigma_0, \quad \bar y = (y_{1}, y_{2}),  \quad\\  \bar y &= f(y_{3} ) \left( \cos \phi  , \sin \phi \right) = f(y_{3}) \, e^{i \phi}  \, , \quad {\mbox {for some}} \quad \phi \in [0,2\pi]\\
			&-{T\over 2} \leq y_{3} \leq {T\over 2}.
		\end{aligned}
	\end{equation}

	The main result is the following\\
	
	\begin{prop}\label{expN0} Let $\alpha \in (0,1)$ and  $h: \Sigma^n \to \R$ be a small $C^{2, \alpha}$ function satisfying \eqref{simh}. Let $y \in \Sigma_0$ as in \eqref{uno}. 
		Then
		\begin{equation}\label{expansion}
			\begin{aligned}
				\tilde N_{ \Sigma^n_h} (y) &={2|\Omega_0| \over T} \,\log n \, + F_0^n (y_{3} )  - y_{2} \, {2\pi |\Omega_0 | \over T^2} \, {\log n \over n} \, \\
				& + \frac 1n F_1^n (y)+ \, \left( {2  \over T} \, \ln n \, + b(y) \, \right) \, \int_{\Sigma_0  }   h  \\
				&+\, \ell_1 [h] (y) + \, {1\over n} \ell_2 [h] (y)
			\end{aligned}
		\end{equation}
		where $|\Omega_0|$ is the volume of the region enclosed by $\Sigma_0$ and $T$ is the period of the Delaunay surface $\Sigma$. The functions $F_0^n$, $F_1^n$ and $b$ are smooth functions of their variables, which are uniformly bounded, together with their derivatives, as $n \to \infty$. The function $F_0^n (y_3)$ is even and periodic of period $T$, the functions $F_1^n (y)$ and $b$ satisfy the symmetries \eqref{simh}.
		Moreover, $\ell_1$, $\ell_2$ are smooth functions,  uniformly bounded, together with their derivatives, as $n \to \infty$, and  $\ell_i (0) = 0$, $\nabla_h \ell_i (0) \not= 0$, $i=1,2$. They both satisfy the symmetries \eqref{simh}. Besides, if $h$ is also even with respect to $y_2$ then $\ell_1[h]$ is even in $y_2$.
	\end{prop}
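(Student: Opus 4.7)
The plan is to decompose the integration domain block-by-block and handle the "self" block and the $n-1$ "far" blocks separately. Writing
$$\tilde N_{\Sigma^n_h}(y) \;=\; I_0(y,h) \;+\; \sum_{k=1}^{n-1} I_k(y,h), \qquad I_k(y,h) = \int_{X(\Omega_{k,h})} \frac{d\tilde x}{|X(y_h) - \tilde x|},$$
where $\Omega_{k,h}$ is the perturbed $k$-th block of $\Omega^n_h$, the self-term $I_0$ is uniformly bounded in $n$ since the integrable singularity of $1/|\cdot|$ in $\mathbb{R}^3$ is harmless and the map $X$ differs from a rigid motion only by $O(1/n)$ corrections coming from the curvature of the collar. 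At $h=0$, the rotational symmetry of $\Omega_0$ about the $y_3$-axis forces $I_0$ to depend only on $y_3$, producing the smooth even $T$-periodic function $F_0^n(y_3)$; the $O(1/n)$ discrepancy between the toroidal and flat self-integrals is absorbed into $n^{-1}F_1^n(y)$. Differentiating $I_0$ with respect to the domain and with respect to the target point $y_h$ produces the nonlinear contribution to $\ell_1[h]$.

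For the far blocks I will exploit the explicit form of $X$ in \eqref{defX}. Writing $p\in\Omega_0$ and $q\in\Omega_0$ so that the $k$-th block is parametrized as $q + (0,0,kT)$, the law of cosines gives
$$|X(p) - X(q+(0,0,kT))|^2 = (p_1-q_1)^2 + (R+p_2)^2 + (R+q_2)^2 - 2(R+p_2)(R+q_2)\cos\!\Bigl(\tfrac{p_3-q_3-kT}{R}\Bigr).$$
Using $T/R = 2\pi/n$ and Taylor-expanding in $1/R$ yields, for $k\ne 0$,
$$\frac{1}{|X(p)-X(q+(0,0,kT))|} = \frac{1}{2R\,|\sin(\pi k/n)|}\Bigl(1 + \alpha_k(p,q) + r_k(p,q)\Bigr),$$
where $\alpha_k$ is an explicit expression linear in $(p_2+q_2)$ of size $O(1/n)$ and $r_k$ is a bounded remainder of order $(R\sin(\pi k/n))^{-2}$. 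Integrating over $q\in\Omega_0$ and summing in $k$, the leading term is controlled by the classical asymptotic $\sum_{k=1}^{n-1}\frac{1}{\sin(\pi k/n)} = \frac{2n}{\pi}\log n + O(n)$, which combined with the factor $|\Omega_0|/(2R) = \pi|\Omega_0|/(nT)$ recovers exactly the principal coefficient $(2|\Omega_0|/T)\log n$. The first correction $\alpha_k$, after summation, produces the $-y_2\cdot (2\pi |\Omega_0|/T^2)(\log n)/n$ term, and the remaining bounded sums collapse into the bounded smooth functions that enlarge $F_0^n$ and $F_1^n$. Uniform bounds on derivatives in $n$ follow from keeping all remainders explicit and using $|\sin(\pi k/n)|\gtrsim \min(k,n-k)/n$.

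The last and most delicate step is to extract the terms that are linear in $h$ globally, not just block by block. The perturbation enters both through $X(y_h)$ and through the domain $\Omega^n_h$. The domain derivative gives, to first order, a sum of the form
$$\sum_{k=0}^{n-1}\int_{\Sigma_k}\frac{h(y')\,d\sigma(y')}{|X(y_h)-X(y')|},$$
whose $k=0$ piece is bounded (and is absorbed into $\ell_1[h]$) while, for $k\ne 0$, replacing the kernel by its leading value $1/(2R|\sin(\pi k/n)|)$ and invoking again the sine-sum asymptotic yields precisely the coefficient $(2/T)\log n$ multiplying $\int_{\Sigma_0} h$, with the next-order corrections consolidating into the smooth function $b(y)$. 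The displacement of the target point contributes smooth operators of $h$ collected in $\ell_1[h]$ and in $n^{-1}\ell_2[h]$; the factor $n^{-1}$ in $\ell_2$ reflects that the far-block kernel is already of order $1/R$ before differentiation, so differentiating in $h$ preserves this small factor. Higher-order terms in $h$ are controlled by the same Taylor expansion together with the smoothness of the kernel away from its singularity.

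The main obstacle is bookkeeping: making sure that every linear-in-$h$ contribution (from the target point and from the domain) is correctly separated into the coefficient of $\int_{\Sigma_0} h$, the bounded operators $\ell_i[h]$, and the pointwise smooth terms, and that the symmetries \eqref{simh} are preserved at each stage. The symmetry verification, however, is essentially formal: the map $X$ intertwines the translation $y_3 \mapsto y_3+T$ with the rotation $\mathcal{R}_{2\pi/n}$ and the reflections in $y_1$, $y_3$ with $R_{2,3}$, $R_{1,2}$ as in \eqref{variable}; since $\Omega^n_h$ and the Euclidean distance are invariant under these rigid motions whenever $h$ satisfies \eqref{simh}, each block integral transforms accordingly and the claimed symmetries of $F_0^n$, $F_1^n$, $b$, $\ell_1$, $\ell_2$ follow as in Remark \ref{rem1}.
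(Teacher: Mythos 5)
Your architecture — block decomposition of $\tilde\Omega^n_h$, explicit law of cosines for the toroidal distance, and extraction of the $\log n$ coefficient from an asymptotic for $\sum_{k}\frac{1}{\sin(\pi k/n)}$ — is in the same spirit as the paper's proof. The paper, after the same change of variables $\tilde x = X(x)$, pulls back all $n$ blocks to $\Omega_0$, puts the kernel into the form $\tfrac\pi n\sum_{k=0}^{n-1}\bigl(\sin^2(\tfrac{k\pi}{n}+\tfrac\alpha n)+\tfrac{\beta^2}{n^2}\bigr)^{-1/2}$, replaces the Riemann sum by the integral $\int_0^\pi(\sin^2(s+\tfrac\alpha n)+\tfrac{\beta^2}{n^2})^{-1/2}\,ds$, evaluates that integral explicitly, and bounds the sum-minus-integral error $J$. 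The $\beta^2/n^2$ inside the square root — coming from $|\bar x-\bar y|^2$ — is what allows the paper to treat \emph{all} $n$ blocks uniformly, including the ones touching $y$, because the resulting $\log|\bar x-\bar y|$ singularity is integrable over $\Omega_0$. Your alternative, using the classical asymptotic $\sum_{k=1}^{n-1}\frac{1}{\sin(\pi k/n)} = \frac{2n}{\pi}\log n + O(n)$ rather than an integral approximation, is a reasonable substitute for the bulk of the sum and does recover the leading coefficients; but it hides two gaps.

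First, the expansion
$$
\frac{1}{|X(p)-X(q+(0,0,kT))|}=\frac{1}{2R|\sin(\pi k/n)|}\bigl(1+\alpha_k(p,q)+r_k(p,q)\bigr)
$$
with $r_k$ \emph{bounded} fails for $k=1$ and $k=n-1$. For $p\in\Sigma_0$ with $p_3$ near $T/2$ and $q\in\Omega_0$ with $q_3$ near $-T/2$ and $\bar q\to\bar p$, the left-hand side blows up, while $\tfrac{1}{2R\sin(\pi/n)}\approx \tfrac1T$ stays bounded; hence $r_1$ is unbounded. This matters because the $\log n$ accumulates from all dyadic scales $k$, so you cannot simply quote the sine-sum asymptotic for the "far blocks" while keeping an unbounded remainder on the first of them. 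The fix is either to group $\Omega_1$, $\Omega_{n-1}$ (or a fixed finite collar of blocks) with $\Omega_0$ into one "near" piece that is treated as an integrable singular integral, or to retain a $\beta$-type regularization in the kernel as the paper does and let the resulting $\log|\bar x-\bar y|$ be absorbed after $x$-integration.

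Second, your justification of the evenness of $\ell_1[h]$ in $y_2$ does not follow from the intertwining \eqref{variable}: the symmetry list \eqref{simh} contains no $y_2$-reflection, and $X$ does not conjugate $y_2\mapsto-y_2$ to a rigid motion of the torus (that map would invert through the central circle, which changes the metric at order $1/R$). In the paper this is exactly why the target-point variation $N_2[h]$ must first be flattened ($X(y)\approx(y_1,-y_2+R,y_3)+O(R^{-1})$), the flat piece $\bar N_2$ shown even in $y_2$ by domain symmetry, and the $O(1/n)$ torsional discrepancy pushed into $\ell_2$. Your proposal gives no argument for that split, so the evenness claim for $\ell_1$ is unsupported as stated.
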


	The proof of this Proposition follows from two results. First we assume that $h=0$ and 
	for a point $\tilde y \in \tilde \Sigma^n$ we  compute
	$$
	\int_{\tilde \Omega^n} {d \tilde x \over |\tilde y - \tilde x| } .
	$$
	We write
	\be \label{N1}
	N_{\Sigma^n} (y) =
	\int_{\tilde \Omega^n}\dfrac{d \tilde x}{|X(y) -\tilde x|},\quad \tilde y  = X (y) \in \tilde \Sigma^n, \quad y \in \Sigma^n.
	\ee

	\medskip
	
	\begin{lemma}\label{expN} Let $y \in \Sigma_0$ as in \eqref{uno}. Then 
		\begin{equation}\label{expansion}
			\begin{aligned}
				N_{ \Sigma^n} (y) &={2|\Omega_0| \over T} \,\log n \, + F_0^n (y_{3} )  - y_{2} \, {2\pi |\Omega_0 | \over T^2} \, {\log n \over n} \, \\
				& + \frac 1n F_1^n(y)
			\end{aligned}
		\end{equation}
		where $|\Omega_0|$ is the volume of the region enclosed by $\Sigma_0$. The functions $F_0^n$ and $F_1^n$ are smooth functions of their variables, which are uniformly bounded, together with their derivatives, as $n \to \infty$. The functions $F_0^n (y_3)$  and $F_1^n (y)$  satisfy the symmetries \eqref{simh}. 
	\end{lemma}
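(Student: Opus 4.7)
The plan is to decompose $\tilde\Omega^n=\bigcup_{k=0}^{n-1}X(\Omega_k)$ blockwise and use the translation invariance $\Omega_k=\Omega_0+(0,0,kT)$ to rewrite
$$
N_{\Sigma^n}(y)=\sum_{k=0}^{n-1}\int_{\Omega_0}\frac{1+x_2/R}{|X(y)-X(x+kT e_3)|}\,dx,
$$
where the Jacobian $\det DX=1+x_2/R$ is read off from \eqref{defQandM}. A direct computation gives the key identity
$$
|X(y)-X(x+kT e_3)|^2=(y_1-x_1)^2+(y_2-x_2)^2+4(R+y_2)(R+x_2)\sin^2\!\Bigl(\tfrac{\pi k}{n}+\tfrac{y_3-x_3}{2R}\Bigr),
$$
using $kT/R=2\pi k/n$. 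First I would isolate the $k=0$ contribution, which after Taylor expansion of $\sin$ reduces to $\int_{\Omega_0}|y-x|^{-1}dx$ plus $O(1/n)$ corrections. By the rotational invariance of $\Omega_0$ about the $y_3$-axis and the fact that $y\in\Sigma_0$ satisfies $|\bar y|=f(y_3)$, this leading piece is a function of $y_3$ alone; together with the $O(1/n)$ remainders absorbed into $F_1^n(y)/n$ it produces the first piece of $F_0^n(y_3)$.

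Next I would treat the terms $k=1,\dots,n-1$ by Taylor expansion in the small parameter $1/R=2\pi/(nT)$. Factoring out $2R\sin(\pi k/n)$ and expanding $(1+\cdots)^{-1/2}$ yields
$$
\frac{1+x_2/R}{|X(y)-X(x+kTe_3)|}=\frac{1}{2R\sin(\pi k/n)}\Bigl(1-\tfrac{y_2}{2R}+\tfrac{x_2}{2R}-\tfrac{\cos(\pi k/n)(y_3-x_3)}{2R\sin(\pi k/n)}+\cdots\Bigr)+\cdots
$$
Integrating over $\Omega_0$ and using the symmetries $\int_{\Omega_0}x_2\,dx=\int_{\Omega_0}x_3\,dx=0$ eliminates the $x_2$ and $x_3$ moments. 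The central analytic input is then the asymptotic
$$
\sum_{k=1}^{n-1}\csc\!\bigl(\tfrac{\pi k}{n}\bigr)=\frac{2n}{\pi}\log n+Cn+O(1/n),
$$
whose leading piece, when multiplied by $|\Omega_0|/(2R)=\pi|\Omega_0|/(nT)$, gives exactly $\frac{2|\Omega_0|}{T}\log n$, and whose correction proportional to $y_2/R$ gives the coefficient $-y_2\,\frac{2\pi|\Omega_0|}{T^2}\,\frac{\log n}{n}$. The $O(1)$ constant contributes to $F_0^n$. A pleasant observation is that the first $y_3$-dependent correction from the $\cos(\pi k/n)$ term vanishes by the identity $\sum_{k=1}^{n-1}\frac{\cos(\pi k/n)}{\sin^2(\pi k/n)}=0$ (pair $k\leftrightarrow n-k$), so no stray $y_3\log n/n$ term appears.

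The next step is to absorb all remaining pieces into the stated form. The second-order Taylor remainders in $1/R$ give sums $\sum_{k=1}^{n-1}\sin^{-p}(\pi k/n)$ with $p\ge 2$, which are $O(n^{p-1})$ and thus produce contributions of size $O(1/n)$ after multiplication by $R^{-2}=O(1/n^2)$; these pieces, together with the bounded $O(1)$ constant from the first sum and the bounded $y_3$-dependent part of the $k=0$ term, split cleanly into a bounded function of $y_3$ alone (which becomes $F_0^n(y_3)$) plus a bounded function of the full $y$ divided by $n$ (which becomes $F_1^n(y)/n$). Uniform bounds on all $C^k$-norms follow because each factor in the Taylor expansion is uniformly smooth away from the diagonal $k=0$, and the singular $k=0$ block is uniformly smooth in $y_3$ because of the rotational symmetry of $\Omega_0$. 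The symmetries \eqref{simh} for $F_0^n$ (evenness and $T$-periodicity in $y_3$) and for $F_1^n$ (the full list) are inherited from the symmetries of $\Omega_0$ and the identities \eqref{variable}: $y_3\to -y_3$ reverses the parametrization of each block simultaneously, $y_1\to -y_1$ reflects $\Omega_0$ onto itself, and $y_3\to y_3+T$ shifts the $k$-indexing by one, which is a symmetry of the cyclic sum.

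The main obstacle I anticipate is not any single estimate but the bookkeeping: one must keep track of which remainders produce $O(\log n/n)$ versus genuine $O(1/n)$ contributions, show that the $y_3$-dependent $\log n/n$ terms really cancel (beyond the obvious pair symmetry used above), and verify that the splitting into $F_0^n(y_3)+n^{-1}F_1^n(y)$ can be done so that both pieces are $C^k$-uniformly bounded in $n$. This will require controlling the error in the Euler--Maclaurin-type asymptotics for $\sum\csc(\pi k/n)$ in a form that is differentiable in the parameters $y_2,y_3$, and carefully integrating the $k=0$ block singularity $1/|y-x|$ against derivatives in $y$.
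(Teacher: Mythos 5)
Your overall strategy — decompose blockwise, change variables by $X$, Taylor-expand in $1/R$, and read off the $\log n$ from the asymptotics of $\sum_{k=1}^{n-1}\csc(\pi k/n)$ — is genuinely different in spirit from the paper's. The paper instead replaces the sum $\sum_{k} I_k$ by the elementary integral $\int_0^\pi \frac{ds}{\sqrt{\sin^2(s+\alpha/n)+\beta^2/n^2}}$ (which can be evaluated explicitly and already produces $2\ln n - \ln|\bar x-\bar y|^2 + \cdots$ with no surviving $\alpha$-dependence at order $1/n$), and then bounds the Riemann-sum error $J$ separately. Your cancellation of the potential $y_3\,\log n/n$ term via the pairing identity $\sum_k \cos(\pi k/n)/\sin^2(\pi k/n)=0$ is correct and is a clean substitute for the paper's way of seeing that the $\alpha$-dependence cancels in the integral. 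Your leading-order arithmetic checks out: $\frac{\pi|\Omega_0|}{nT}\cdot\frac{2n}{\pi}\log n = \frac{2|\Omega_0|}{T}\log n$, and the $-y_2/(2R)$ coefficient produces exactly $-y_2\,\frac{2\pi|\Omega_0|}{T^2}\frac{\log n}{n}$.

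There is, however, a real gap in the step "treat the terms $k=1,\dots,n-1$ by Taylor expansion in $1/R$." Factoring out $2R\sin(\pi k/n)$ and expanding $(1+\cdots)^{-1/2}$ is only legitimate when $|\bar x-\bar y|^2 \ll 4R^2\sin^2(\pi k/n)$; since $2R\sin(\pi k/n)\approx kT$ for $k\lesssim\sqrt n$ while $|\bar x-\bar y|$ is of order $1$, the expansion parameter is $O(1)$ for $k$ of order $1$ (and symmetrically for $n-k$ of order $1$), and the same is true of the $(y_3-x_3)/(2R\sin(\pi k/n))$ ratio. These boundary ranges of $k$ are not negligible: they carry a fixed fraction of the $\log n$. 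So the proposed pointwise expansion does not hold uniformly, and you cannot simply sum the first few Taylor terms over all $k$. The paper copes with exactly this by splitting the correction sum $J$ at $[\sqrt n]$ (its $J_1, J_2, J_3$) and doing a bespoke analysis on the near-diagonal blocks. You would need an analogous two-regime argument: use the exact kernel for $k\lesssim\sqrt n$ and $n-k\lesssim\sqrt n$ (where the resulting sum converges like $\sum k^{-2}$ and is, after integrating over $\Omega_0$, a smooth bounded function of $y_3$), and reserve the $\csc$-expansion for the middle range. Relatedly, your accounting $\sum_{k=1}^{n-1}\sin^{-p}(\pi k/n)=O(n^{p-1})$ is off by one power (it is $O(n^p)$ for $p\ge 2$, e.g. $\sum\sin^{-2}=\tfrac{n^2-1}{3}$); the resulting contribution is therefore $O(1)$, not $O(1/n)$, and must land in $F_0^n(y_3)$ rather than $F_1^n/n$ — which fortunately it does, by rotational symmetry of $\Omega_0$, but the bookkeeping needs this corrected. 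With the near-diagonal range treated separately and the power counting fixed, the argument closes and recovers the same expansion \eqref{expansion}.
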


	\medskip
	The second result relates $N_{\Sigma^n}$ in \eqref{N1} and  $\tilde N_{\Sigma^n_h}$ in \eqref{N2}.
	
	\begin{lemma}\label{lemma1X}
		Let $\alpha \in (0,1)$ and  $h: \Sigma^n \to \R$ be a small $C^{2, \alpha}$ function satisfying \eqref{simh}. Let $y \in \Sigma_0$ as in \eqref{uno}. 
		Then
		\be \label{exptildeNn}
		\begin{aligned}
			\tilde N_{ \Sigma_h^n} (y ) &=N_{\Sigma^n} (y) 
			+ \, \left( {2  \over T} \, \ln n \, + b(y) \, \right) \,\,\int_{\Sigma_0  }   h  \\
			&+ \, \ell_1 [h] (y) + \, {1\over n} \ell_2 [h] (y)
			\quad {\mbox {as }} n \to \infty.
		\end{aligned}
		\ee 
		In \eqref{exptildeNn}, $N_{\Sigma^n} (y)$ is defined in \eqref{N1} and $b$, $\ell_1$, $\ell_2$ are smooth functions,  uniformly bounded, together with their derivatives, as $n \to \infty$,  and they satisfy the symmetries \eqref{simh}. 
		Besides, $\ell_i (0) = 0$, $\nabla_h \ell_i (0) \not= 0$, $i=1,2$, and if $h$ is also even with respect to $y_2$ then $\ell_1[h]$ is even in $y_2$.
	\end{lemma}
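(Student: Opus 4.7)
The plan is to write
\[
\tilde N_{\Sigma_h^n}(y) - N_{\Sigma^n}(y) = \mathcal{A}(y) + \mathcal{B}(y),
\]
where
\[
\mathcal{A}(y) = \int_{\tilde\Omega^n}\!\Big[\tfrac{1}{|X(y_h)-\tilde x|} - \tfrac{1}{|X(y)-\tilde x|}\Big] d\tilde x,\qquad
\mathcal{B}(y) = \int_{\tilde\Omega_h^n\,\triangle\,\tilde\Omega^n}\!\tfrac{\pm\, d\tilde x}{|X(y_h)-\tilde x|}.
\]
The first piece $\mathcal{A}$ captures the displacement of the base point. Since $X(y_h)-X(y) = h(y)\,DX(y)\nu(y)+O(h^2)$, a Taylor expansion of the Newtonian kernel gives $\mathcal{A}(y) = -\,h(y)\,DX(y)\nu(y)\cdot \nabla_{\tilde y} N_{\Sigma^n}(y) + O(h^2)\cdot(\text{bounded})$. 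By Lemma \ref{expN} the $\log n$ contribution to $N_{\Sigma^n}$ is independent of $y$ and the remaining $y$-dependent pieces are uniformly bounded in $n$ together with their derivatives, so this contribution has the form $\ell_1^{(A)}[h](y)+\tfrac{1}{n}\ell_2^{(A)}[h](y)$ with the required smoothness and vanishing at $h=0$.

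The main work is the analysis of $\mathcal{B}$. Foliating a tubular neighborhood of $\tilde\Sigma^n$ by the $X$-images of the normal rays $z+s\nu(z)$, and performing the change of variables $\tilde x = X(z+s\nu(z))$ in the thin shell $\tilde\Omega_h^n\,\triangle\,\tilde\Omega^n$, yields
\[
\mathcal{B}(y) = \int_{\Sigma^n}\frac{h(z)\,J_X(z)\,d\sigma(z)}{|X(y)-X(z)|} + \mathcal{Q}[h](y),
\]
where $J_X$ is the induced Jacobian, uniformly bounded in $n$ with its derivatives, and $\mathcal Q[h]$ is at least quadratic with the structure needed for $\ell_2$. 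Exploiting the $T$-periodicity of $h$ from \eqref{simh} together with $\Sigma^n = \bigcup_{k=0}^{n-1}(\Sigma_0+kTe_3)$, the single-layer integral splits as
\[
\sum_{k=0}^{n-1}\int_{\Sigma_0}\frac{h(z_0)\,J_X(z_0)\,d\sigma(z_0)}{|X(y)-X(z_0+kTe_3)|}.
\]
The key identity $X(z_0+kTe_3) = \mathcal{R}_{2\pi k/n}X(z_0)$ from \eqref{variable}, combined with $R = nT/(2\pi)$, gives
\[
|X(y)-X(z_0+kTe_3)|^{-1} = \frac{1}{2R\sin(\pi k/n)} + (\text{bounded corrections in }y,z_0,k),
\]
for $1\le k\le n-1$, so that the classical Riemann-sum asymptotic
\[
\sum_{k=1}^{n-1}\frac{1}{n\sin(\pi k/n)} = \frac{2\log n}{\pi} + C_0 + o(1)
\]
produces the announced coefficient $\frac{2}{T}\log n$ in front of $\int_{\Sigma_0}h$. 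The $(y,z_0)$-dependent first-order correction to this leading term supplies the bounded smooth function $b(y)$ multiplying $\int_{\Sigma_0}h$; the local $k=0$ contribution and the residual $z_0$-dependence give a linear-in-$h$ smooth bounded functional $\ell_1^{(B)}[h](y)$; while the $O(1/n)$ errors from the Riemann sum and the $1/R$ corrections to the distance are absorbed into $\tfrac{1}{n}\ell_2[h]$.

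Setting $\ell_1 = \ell_1^{(A)}+\ell_1^{(B)}$ and collecting the remaining $O(1/n)$ terms into $\ell_2$ produces \eqref{exptildeNn}. The symmetries claimed for $b,\ell_1,\ell_2$ follow from applying each of the three identities in \eqref{variable} to both the base point $y$ and the integration variable $z$, using that $J_X$ and $\nu$ inherit the appropriate parities; the $y_2$-evenness of $\ell_1$ when $h$ is $y_2$-even comes from the reflection symmetry $R_{2,3}$ of $X$. I expect the main technical obstacle to be the separation, inside the $k$-sum, of the genuinely $(y,z_0)$-independent $\log n$-coefficient from the bounded remainders: this requires a careful matched expansion near $k=0$ and $k=n$, where $\sin(\pi k/n)$ vanishes and the Newton kernel's local singularity on $\Sigma_0$ is reached, and this matching must preserve the symmetries \eqref{simh} at every step.
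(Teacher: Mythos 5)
Your decomposition $\tilde N_{\Sigma_h^n}-N_{\Sigma^n}=\mathcal A+\mathcal B$ (base-point shift over $\tilde\Omega^n$, signed symmetric difference with base point $y_h$) is algebraically a reshuffling of the paper's split $\tilde N_{\Sigma_h^n}=N_1[h]+N_2[h]$ with $N_1[h]=\int_{\tilde\Omega^n_h}|X(y)-\tilde x|^{-1}\,d\tilde x$ and $N_2[h]$ the base-point difference over $\tilde\Omega^n_h$, and the two differ only in terms that are at least quadratic in $h$; the rest of your argument (tubular-neighborhood change of variables reducing $\mathcal B$ to a single-layer potential $\int_{\Sigma^n}h(z)/|X(y)-X(z)|\,d\sigma$, splitting over $\Sigma_0+kTe_3$, using $X(z_0+kTe_3)=\mathcal R_{2\pi k/n}X(z_0)$ and $2\pi R=nT$ to extract the logarithmic coefficient) mirrors the paper's proof, which carries this out by reusing the asymptotics \eqref{ee2}--\eqref{ee5} of Lemma \ref{expN} rather than the cleaner stand-alone Riemann-sum formula $\sum_{k=1}^{n-1}\frac1{n\sin(\pi k/n)}=\frac{2\log n}{\pi}+C_0+o(1)$ you invoke. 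One small imprecision: the ``bounded corrections'' to $\frac1{2R\sin(\pi k/n)}$ must in fact decay like $O\bigl((n\sin(\pi k/n))^{-2}\bigr)$ so that their $k$-sum is $O(1)$ (this is what the matched expansion near $k\approx0,n$ that you flag at the end actually delivers), and the single-layer Jacobian $J_X=1+O(1/n)$ must be peeled off so that the leading coefficient multiplies $\int_{\Sigma_0}h$ and not $\int_{\Sigma_0}J_X h$; both points are handled in the paper through the explicit $\alpha,\beta$-dependent error terms of Lemma \ref{expN}.
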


	\begin{remark} \label{rem2}
		The functions $\tilde N_{\Sigma_h^n} (y)$ as defined in \eqref{N2} and 
		$N_{\Sigma^n} (y)$ in \eqref{N1} satisfy the symmetries described in \eqref{simh}: they are even with respect to $y_1$ and $y_3$, and they are $T$-periodic in $y_3$.
	\end{remark}
	
	\medskip
	Proposition \ref{expN0} is a direct consequence of Lemmas \ref{expN} and \ref{lemma1X}.
	
	\medskip
	The rest of the Section is devoted to prove these results. We start with Lemma \ref{expN}.
	
	\medskip	
	
	\begin{proof}[Proof of Lemma \ref{expN}.]  
		Assume $y$ satisfies \eqref{uno}. 
		The expression of the non-local operator $N_{\tilde \Sigma^n}$ becomes simpler after the natural  change of variables
		\begin{equation}\label{change}
			\tilde x= X(x)  =      
			\Big ( x_1 \, , \, (R+x_2) \cos 
			\left (  \frac  { x_3} { R}    \right ) ,  (R+x_2) \sin  \left (\frac  { x_3} { R}    \right )\Big )
		\end{equation}
		which has volume element $d\tilde x  = (1+{x_2 \over R} ) \, dx$.  
		
		For the integrand, we check that
		\begin{align*}
			|\tilde x -  X(y)|^2 
			&= |\bar x - \bar y|^2 + a_R^2 \left( 1+{x_2 + y_{2} \over R} + {x_2 y_{2} \over R^2 }\right)
		\end{align*}
		where we use the notation
		$\tilde x = X (x)$ as in \eqref{change} and
		\begin{equation}\label{not1}
			x= (\bar x , x_3), \quad a_R^2= 2\, R^2 \, \left(1-\cos  \left( {x_3 - y_{3} \over 2\, R}\right) \right).
		\end{equation}
		Hence, 
		$$
		N_{ \Sigma^n}  (y ) = \iiint\limits_{\Omega^n}\frac{(1+{x_2 \over R})}{\sqrt{|\bar x - \bar y|^2 + a_R^2  \left( 1+{x_2 + y_{2} \over R} + {x_2 y_{2} \over R^2 }\right)}} \,   \, dx .$$
		Since
		$$
		\Omega^n = \bigcup_{k=0}^{n-1} \Omega_k $$
		as in \eqref{truncated}, 
		we split the region of integration over the sets $\Omega_k$ and get 
		\begin{align*}
			N_{ \Sigma^n}  (y ) &= \iiint\limits_{\Omega^n}\frac{(1+{x_2 \over R})}{\sqrt{|\bar x - \bar y|^2 + a_R^2  \left( 1+{x_2 + y_{2} \over R} + {x_2 y_{2} \over R^2 }\right)}} \,   \, dx\\&= 
			\sum_{k=0}^{n-1} \iiint\limits_{\Omega_k } \frac{(1+{x_2 \over R})}{\sqrt{|\bar x - \bar y|^2 + a_R^2  \left( 1+{x_2 + y_{2} \over R} + {x_2 y_{2} \over R^2 }\right)}} \,   \, dx  \\&= 
			\sum_{k=0}^{n-1}  I_k (y)  \quad {\mbox {where}}  \quad a_R \quad {\mbox {is in \eqref{not1}, and }}\\
			I_k (y) &:=\iiint\limits_{\Omega_0} \frac{(1+{x_2 \over R})}{\sqrt{|\bar x - \bar y|^2 + a_{Rk}^2  \left( 1+{x_2 + y_{2} \over R} + {x_2 y_{2} \over R^2 }\right)}} \,   \, dx
		\end{align*}
		with
		$$
		a_{Rk} =   2\, R \, \sin \left({k T \over 2R } + {x_{3} - y_{3} \over 2\, R} \right).$$ 
		\medskip
		
		Recalling that $2 R \pi = n \, T$, we get that  $a_{Rk} = {n T \over \pi} \sin ( {k \over n} \pi + {x_3 - y_{3} \over nT } \pi ) $ and
		\begin{align*}
			I_k (y) &:={\pi \over n T} \iiint\limits_{\Omega_0} \frac{(1+{x_2 \over R})}{\sqrt{  \sin^2 ( {k \over n} \pi + {x_3 - y_{3} \over nT } \pi )  \left( 1+{x_2 + y_{2} \over R} + {x_2 y_{2} \over R^2 }\right) + {|\bar x - \bar y|^2 \over n^2 T^2} \pi^2}} \,   \, dx \\
			&= {\pi \over n T} \iiint\limits_{\Omega_0} \frac{\gamma }{\sqrt{  \sin^2 ( {k \over n} \pi + {\alpha  \over n } )   + \frac{\beta^2}{n^2}}} \,   \, dx
		\end{align*}
		where, to simplify notations, we used
		\begin{equation}\label{parameter}
			\begin{aligned}
				\alpha &=  {x_3 - y_{3} \over T } \, \pi,  \quad \beta^2  = {|\bar x - \bar y|^2 \over  T^2} \, \pi^2 \, (1+ {x_2 + y_{2} \over R} + {x_2 y_{2} \over R^2 })^{-1} \\\delta &= (1+{x_2 \over R}) (1+ {x_2 + y_{2} \over R} + {x_2 y_{2} \over R^2 })^{-{1\over 2}}.
			\end{aligned}
		\end{equation}
		Notice that $\alpha$, $\beta$ and $\delta$ do not depend on $k$.  
		Besides, we can expand 
		\begin{equation}\label{gamma}
			\delta =  1   -   \frac {y_{2}}{2R}   + \frac {x_2}{2R} + 
			O( \frac 1{n^2} F_n(x,y) ) .
		\end{equation}
		Thus
		\begin{align*}
			N_{\Sigma^n} & (y ) =
			{1\over T} \, \iiint\limits_{\Omega_0} \delta  \, {\pi \over n} \, \left( \sum_{k=0}^{n-1} \frac{1}{\sqrt{  \sin^2 ( {k \over n} \pi + {\alpha  \over n } )   + \frac{\beta^2}{n^2}}}  \right) \,   \, dx
		\end{align*}
		which we write as follows
		\begin{equation}\label{d1}
			\begin{aligned}
				N_{ \Sigma^n}  (y ) &=
				{1\over T} \, \iiint\limits_{\Omega_0} \delta  \, \left(\int_0^\pi  \frac{1}{\sqrt{  \sin^2 ( s + {\alpha  \over n } )   + \frac{\beta^2}{n^2}}} \, ds  \right) \, dx\\
				&+ {1\over T} \, \iiint\limits_{\Omega_0} \delta  \,  J (x; y)\,  \, dx  
			\end{aligned}
		\end{equation}
		where
		$$
		J (x; y)= \sum_{k=0}^{n-1} \int_{k\pi \over n}^{(k+1) \pi \over n} \left(\frac{1}{\sqrt{  \sin^2 ( {k \over n} \pi + {\alpha  \over n } )   + \frac{\beta^2}{n^2}}} -            \frac{1}{\sqrt{  \sin^2 ( s + {\alpha  \over n } )   + \frac{\beta^2}{n^2}}} \right) \, ds 
		$$
		We claim that
		\begin{equation}\label{ee2}
			\begin{aligned}
				\int_0^\pi  \frac{1}{\sqrt{  \sin^2 ( s + {\alpha  \over n } )   + \frac{\beta^2}{n^2}}} & \, ds
				=2 \ln n  - \ln |\bar x- \bar y|^2 + 2\pi \,  {x_2 + y_{2} \over n\, T} \\
				&+ \ln 4T^2 +2\int_0^{\pi \over 2} {t-\sin t \over t \sin t} \, dt  +  {\ln n \over n^2} \, F_n (\alpha , \beta)  \\
			\end{aligned}
		\end{equation}
		and
		\begin{equation}\label{ee3}
			\begin{aligned}
				J (x; y_0)&= F(\alpha , \beta)  + {1\over n}  \, F_n (\alpha , \beta) 
			\end{aligned}
		\end{equation}
		where $F$, $F_n$ are uniformly  bounded, as $n \to \infty$, together with their derivatives in $\alpha $ \and $\beta$. The specific definition of
		$F_n$ changes from one formula to the other. We refer to \eqref{parameter} for $\alpha$ and $\beta$.

		\medskip
		We assume for the moment the validity of \eqref{ee2}, and \eqref{ee3}, whose proofs are postponed below.

		\medskip
		Using the expansion of $\delta$ as in  \eqref{gamma}  from \eqref{ee2} we get
		\begin{equation}\label{ee4}
			\begin{aligned}
				{1\over T} \, &\iiint\limits_{\Omega_0} \delta  \, \left(\int_0^\pi  \frac{1}{\sqrt{  \sin^2 ( s + {\alpha  \over n } )   + \frac{\beta^2}{n^2}}} \, ds  \right) \, dx = {2 |\Omega_0| \over T} \, \ln n \\
				&+ {|\Omega_0 |\over T} \,  A -{1\over T} \iiint\limits_{\Omega_0} \ln (|\bar x - \bar y|^2 )\, dx \\
				&- {2\, \pi \,  |\Omega_0|  \over T^2} \, y_{2} \, {\ln n \over n} \\
				&+ {2\pi \over n T^2} \, |\Omega_0 | \, \left( 1-{A\over 2} + {1\over 2 |\Omega_0 |} \iiint\limits_{\Omega_0} \ln (|\bar x - \bar y|^2 )\, dx \right) \, y_{2} \\
				&+{1\over n^2} F_n (y )  \\
				&{\mbox {where}} \quad A= \left( \ln 4T^2 +2\int_0^{\pi \over 2} {t-\sin t \over t \sin t} \, dt  \right) .
			\end{aligned}
		\end{equation}
		In order to compute ${1\over T} \, \iiint\limits_{\Omega_0} \delta  \,  J (x; y)\,  \, dx $, we need a more precise expansion of \eqref{ee3}.
		From \eqref{parameter}, we get
		\begin{align*}
			\beta  &= {|\bar x - \bar y| \over  T} \, \pi \, (1+ {x_2 + y_{2} \over R} + {x_2 y_{2} \over R^2 })^{-{1\over 2} } \\
			&=\bar \beta \left( 1- {x_2 + y_{2} \over 2R} + O ({1\over n^2} ) F_n (x,y)\right) \quad {\mbox {where}} \quad \bar \beta = {|\bar x - \bar y| \over  T} \, \pi .
		\end{align*}
		We thus write the function $F(\alpha , \beta)$ in \eqref{ee3} as follows
		$$
		F(\alpha , \beta ) = F(\alpha , \bar \beta) -{\pi \over n T}  \partial_\beta F (\alpha , \bar \beta ) [\bar \beta (x_2 + y_{2} )] + O({1\over n^2}) F_n (x,y).
		$$
		Thus we conclude
		\begin{equation}\label{ee5}
			\begin{aligned}
				{1\over T} \, & \iiint\limits_{\Omega_0} \delta  \,  J (x; y)\,  \, dx = {1\over T} \iiint\limits_{\Omega_0} F(\alpha , \bar \beta ) \, dx \\
				&- {\pi \over n T^2} \, y_{2} \, \left( \iiint\limits_{\Omega_0} F(\alpha , \bar \beta) \, dx + \iiint\limits_{\Omega_0} \partial_\beta F (\alpha , \bar \beta ) \bar \beta \, dx \right)
				\\
				&+{1\over n} F_n (\bar y ) .
			\end{aligned}
		\end{equation}
		Observe now that the function ${1\over T} \iiint\limits_{\Omega_0} \ln (|\bar x - \bar y|^2 )\, dx$ in \eqref{ee4} depends on $|\bar y|$. Similarly, ${1\over T} \iiint\limits_{\Omega_0} F(\alpha , \bar \beta ) \, dx$ in \eqref{ee5} only depends on $y_{3}$.
		The validity of the expansion \eqref{expansion} readily follows from \eqref{d1}, \eqref{ee4} and \eqref{ee5}.

		\medskip
		Next we prove the evenness of $N_{\Sigma^n}$ as a function of $y_{3}$. Observe that
		\begin{align*}
			I_k & (\bar y, - y_{3} )  =\iiint\limits_{\Omega_0}
			\frac{(1+{x_2 \over R})}{\sqrt{|\bar x - \bar y|^2 + \left( 2\, R \, \sin \left({k T \over 2R } + {x_{3} + y_{3} \over 2\, R} \right) \right)^2  \left( 1+{x_2 + y_{2} \over R} + {x_2 y_{2} \over R^2 }\right)}}\,  dx \\
			\\
			&\quad ( x_3= - z_3 , \quad \bar x = \bar z )\\
			&=\iiint\limits_{\Omega_0}\frac{1}{\sqrt{|\bar z - \bar y|^2 + \left( 2\, R \, \sin \left({k T \over 2R } + {-z_{3} + y_{3} \over 2\, R} \right) \right)^2  \left( 1+{y_2 + y_{2} \over R} + {y_2 y_{2} \over R^2 }\right)}} \,  dz \\&= \iiint\limits_{\Omega_0}\frac{1}{\sqrt{|\bar x - \bar y|^2 + \left( 2\, R \, \sin \left(-{k \over n } \pi + {x_{3} - y_{3} \over 2\, R} \right) \right)^2 \left( 1+{x_2 + y_{2} \over R} + {x_2 y_{2} \over R^2 }\right)}} \,  dx\\
			&= \iiint\limits_{\Omega_0}\frac{1}{\sqrt{|\bar x - \bar y|^2 + \left( 2\, R \, \sin \left({n-k \over n } \pi + {x_{3} - y_{3} \over 2\, R} \right) \right)^2\left( 1+{x_2 + y_{2} \over R} + {x_2 y_{2} \over R^2 }\right) }} \,  dy \\
			&= I_{n-k} (\bar y , y_{3}).
		\end{align*}
		Hence $ N_{ \Sigma^n} (\bar y , y_{3})$ is an even function in $y_{3}$, for $(\bar y, y_{3}) \in \Sigma_0$, namely
		\begin{align*}
			N_{ \Sigma^n}  (\bar y, - y_{3}  ) &= \sum_{k=0}^{n-1} I_k (-y_{03}) = \sum_{k=0}^{n-1} I_{n-k} (y_{3}) = \sum_{k=0}^{n-1} I_k (y_{3}) =   N_{ \Sigma^n}  (\bar y, y_{3}  )
		\end{align*}
		as $I_0 (\bar y , y_{3}) = I_n (\bar y, y_{3}).$ This proves part of the statements in Remark \ref{rem2}.
		
		\medskip
		
		The rest of the proof is devoted to obtaining \eqref{ee2} and \eqref{ee3}. 
		
		\medskip
		
		{\it Proof of \eqref{ee2}}. \ \ 
		We observe that
		\begin{align*}
			\int_0^\pi  \frac{1}{\sqrt{  \sin^2 ( s + {\alpha  \over n } )   + \frac{\beta^2}{n^2}}} \, ds &= ( \int_0^{\pi \over 2} + \int_{\pi \over 2}^\pi  )  \frac{1}{\sqrt{  \sin^2 ( s + {\alpha  \over n } )   + \frac{\beta^2}{n^2}}} \, ds \\
			=  \int_0^{\pi \over 2}  \frac{1}{\sqrt{  \sin^2 ( s + {\alpha  \over n } )   + \frac{\beta^2}{n^2}}} \, ds & + \int_0^{\pi \over 2}  \frac{1}{\sqrt{  \sin^2 ( s - {\alpha  \over n } )   + \frac{\beta^2}{n^2}}} \, ds
		\end{align*}
		since
		\begin{align*}
			\int_{\pi \over 2}^\pi & \frac{1}{\sqrt{  \sin^2 ( s + {\alpha  \over n } )   + \frac{\beta^2}{n^2}}} \, ds = \quad (s=\pi - t ) \\
			&= -\int_{\pi \over 2}^0 \frac{1}{\sqrt{  \sin^2 ( \pi -t  + {\alpha  \over n } )   + \frac{\beta^2}{n^2}}} \, dt=  \int_0^{\pi \over 2}  \frac{1}{\sqrt{  \sin^2 ( s - {\alpha  \over n } )   + \frac{\beta^2}{n^2}}} \, ds 
		\end{align*}
		So, let us consider 
		\begin{align*}
			\int_0^{\pi \over 2}  & \frac{1}{\sqrt{  \sin^2 ( s + {\alpha  \over n } )   + \frac{\beta^2}{n^2}}} \, ds
			=\int_0^{\pi \over 2} ( \frac{1}{\sqrt{  \sin^2 ( s + {\alpha  \over n } )   + \frac{\beta^2}{n^2}}} \, -   \frac{1}{\sqrt{   ( s + {\alpha  \over n } )^2   + \frac{\beta^2}{n^2}}} ) \, ds \\
			&+ \ln \left( {n\pi +\alpha \over \beta}  + \sqrt{ ({n\pi +\alpha \over \beta} )^2+1}\right) - \ln  \left( { \alpha \over \beta}  + \sqrt{ ({\alpha \over \beta} )^2+1}\right)\\
			&= \int_0^{\pi \over 2}  ( \frac{1}{\sqrt{  \sin^2 ( s + {\alpha  \over n } )   + \frac{\beta^2}{n^2}}} \, -   \frac{1}{\sqrt{   ( s + {\alpha  \over n } )^2   + \frac{\beta^2}{n^2}}} ) \, ds \\
			&+\ln n + \ln {2\pi \over \alpha + \sqrt{\alpha^2 + \beta^2} } + \ln (1+ {\alpha \over n \pi} ) + \ln \left({1+ \sqrt{1+{\beta^2 \over (n\pi + \alpha )^2} }\over 2} \right)
		\end{align*}
		Besides
		\begin{align*}
			\int_0^{\pi \over 2} &  ( \frac{1}{\sqrt{  \sin^2 ( s + {\alpha  \over n } )   + \frac{\beta^2}{n^2}}} \, -   \frac{1}{\sqrt{   ( s + {\alpha  \over n } )^2   + \frac{\beta^2}{n^2}}} ) \, ds\\
			&= 
			\int_0^{\pi \over 2} \frac{( s + {\alpha  \over n } )^2    - \sin^2 ( s + {\alpha  \over n } )   }{\sqrt{  \sin^2 ( s + {\alpha  \over n } )   + \frac{\beta^2}{n^2}}  \sqrt{   ( s + {\alpha  \over n } )^2   + \frac{\beta^2}{n^2}}  \left( \sqrt{  \sin^2 ( s + {\alpha  \over n } )   + \frac{\beta^2}{n^2}} + \sqrt{   ( s + {\alpha  \over n } )^2   + \frac{\beta^2}{n^2}} \right) } \, ds \\&= (t= s+ {\alpha \over n})\\
			&=  \int_{\alpha \over n}^{{\pi \over 2} + {\alpha \over n}  } \frac{t^2    - \sin^2 t   }{\sqrt{  \sin^2 t  + \frac{\beta^2}{n^2}}  \sqrt{  t^2   + \frac{\beta^2}{n^2}}  \left( \sqrt{  \sin^2 t  + \frac{\beta^2}{n^2}} + \sqrt{   t^2   + \frac{\beta^2}{n^2}} \right) } \, dt \\
			&= (\int_{C \over n}^{{\pi \over 2} + {\alpha \over n}  }  + \int_{\alpha \over n}^{{C \over n}  } ) f(t, {\beta^2 \over n^2} )\, dt
		\end{align*}
		for some fixed $C>0$, where
		$$
		f(t, \theta) = \frac{t^2    - \sin^2 t   }{\sqrt{  \sin^2 t  + \theta}  \sqrt{  t^2   + \theta}  \left( \sqrt{  \sin^2 t  + \theta } + \sqrt{   t^2   + \theta } \right) } . $$
		Since $\partial_\theta f(t,0) \sim {1\over t} $, as $t \to 0$, we have
		\begin{align*}
			\int_{C \over n}^{{\pi \over 2} + {\alpha \over n}  }   f(t, {\beta^2 \over n^2} )\, dt &=  \int_{C \over n}^{{\pi \over 2} + {\alpha \over n}  }   f(t, 0)\, dt + \ln n \, O({\beta^2 \over n^2})\\
			&= \int_0^{\pi \over 2} {t-\sin t \over t \sin t} \, dt +  \ln n \, O({\beta^2 \over n^2})
		\end{align*}
		Moreover, we check directly that
		$ \int_{C \over n}^{{\pi \over 2} + {\alpha \over n}  }  f(t, {\beta^2 \over n^2} )\, dt = O({1\over n^2})$ and we conclude that
		\begin{align*}
			\int_0^{\pi \over 2}   \frac{1}{\sqrt{  \sin^2 ( s + {\alpha  \over n } )   + \frac{\beta^2}{n^2}}} \, ds&= \ln n +  \ln {2\pi \over \alpha + \sqrt{\alpha^2 + \beta^2} }+ \int_0^{\pi \over 2} {t-\sin t \over t \sin t} \, dt \\
			& + \ln (1+ {\alpha \over n \pi} ) +  \ln n \, O({\beta^2 \over n^2})
		\end{align*}
		hence
		\begin{align*}
			\int_0^{\pi }   \frac{1}{\sqrt{  \sin^2 ( s + {\alpha  \over n } )   + \frac{\beta^2}{n^2}}} \, ds&=   2\ln n +  \ln {2\pi \over \alpha + \sqrt{\alpha^2 + \beta^2} }+ \int_0^{\pi \over 2} {t-\sin t \over t \sin t} \, dt \\
			& + \ln (1+ {\alpha \over n \pi} ) +  \ln n \, O({\beta^2 \over n^2})\\
			&+  \ln {2\pi \over -\alpha + \sqrt{\alpha^2 + \beta^2} }+ \int_0^{\pi \over 2} {t-\sin t \over t \sin t} \, dt \\
			& + \ln (1+ {-\alpha \over n \pi} ) +  \ln n \, O({\beta^2 \over n^2})
		\end{align*}
		which, in combination with \eqref{parameter}, gives the expansion \eqref{ee2}. 
		
		\medskip
		{\it Proof of \eqref{ee3}}. \ \ 
		To estimate $J$, we start rename $k= n-k-1$ and change the variable of integration $t=\pi-s$ to get
		\begin{align*}
			& J= \sum_{k=0}^{n-1} \int_{(n-k-1)\pi \over n}^{(n-k) \pi \over n} \left(\frac{1}{\sqrt{  \sin^2 ( {n-k-1 \over n} \pi + {\alpha  \over n } )   + \frac{\beta^2}{n^2}}} - \frac{1}{\sqrt{  \sin^2 ( s + {\alpha  \over n } )   + \frac{\beta^2}{n^2}}} \right) \, ds \\
			&=\sum_{k=0}^{n-1} \int_{(n-k-1)\pi \over n}^{(n-k) \pi \over n} \left(\frac{1}{\sqrt{  \sin^2 ( {k+1 \over n} \pi - {\alpha  \over n } )   + \frac{\beta^2}{n^2}}} - \frac{1}{\sqrt{  \sin^2 ( s + {\alpha  \over n } )   + \frac{\beta^2}{n^2}}} \right) \, ds\\
			&= \sum_{k=0}^{n-1} \int_{k\pi \over n}^{(k+1) \pi \over n} \left(\frac{1}{\sqrt{  \sin^2 ( {k+1 \over n} \pi - {\alpha  \over n } )   + \frac{\beta^2}{n^2}}} - \frac{1}{\sqrt{  \sin^2 ( t - {\alpha  \over n } )   + \frac{\beta^2}{n^2}}} \right) \, dt
		\end{align*}
		Combining this expression for $J$ with its definition we get
		\begin{align*}
			2 &J=  \sum_{k=0}^{n-1}  (a_k + b_k ) \,  , \quad {\mbox {where}} \\
			a_k&= \int_{k\pi \over n}^{(k+1) \pi \over n} \left( \frac{1}{\sqrt{  \sin^2 ( {k \over n} \pi + {\alpha  \over n } )   + \frac{\beta^2}{n^2}}} - \frac{1}{\sqrt{  \sin^2 ( s + {\alpha  \over n } )   + \frac{\beta^2}{n^2}}} \right) \, ds \\
			b_k &= \int_{k\pi \over n}^{(k+1) \pi \over n} \left(\frac{1}{\sqrt{  \sin^2 ( {k+1 \over n} \pi - {\alpha  \over n } )   + \frac{\beta^2}{n^2}}} - \frac{1}{\sqrt{  \sin^2 ( s - {\alpha  \over n } )   + \frac{\beta^2}{n^2}}} \right)\, ds.
		\end{align*}
		Using the change of variables $s={t\over n} +{k\pi \over n}$, we get
		\begin{align*}
			a_k 
			&= 
			\int_0^{\pi }  \left( \frac{1}{\sqrt{  n^2 \sin^2 ( {k \pi + \alpha \over n}   )   + \beta^2} }- \frac{1}{\sqrt{ n^2  \sin^2 ( {t+ k \pi + \alpha  \over n } )   + \beta^2}}\right) \, dt 
		\end{align*}
		Using the change of variables ${t\over n} ={k+1 \over n} \pi - s$
		\begin{align*}
			b_k 
			&= 
			\int_0^{\pi }  \left( \frac{1}{\sqrt{  n^2 \sin^2 ( {(k +1) \pi - \alpha \over n}   )   + \beta^2} }- \frac{1}{\sqrt{ n^2  \sin^2 ( -{t\over n} + {(k+1)  \pi -\alpha  \over n } )   + \beta^2}}\right) \, dt 
		\end{align*}
		Thus we have
		\begin{align*}
			2J &= \sum_{k=0}^{n-1}  \int_0^{\pi }  \left( \frac{1}{\sqrt{  n^2 \sin^2 ( {k \pi + \alpha \over n}   )   + \beta^2} }- \frac{1}{\sqrt{ n^2  \sin^2 ( {s+ k \pi + \alpha  \over n } )   + \beta^2}}\right) \, ds \\
			&+ \sum_{k=0}^{n-1}  \int_0^{\pi }  \left( \frac{1}{\sqrt{  n^2 \sin^2 ( {(k +1) \pi - \alpha \over n}   )   + \beta^2} }- \frac{1}{\sqrt{ n^2  \sin^2 (  {-s+ (k+1)  \pi -\alpha  \over n } )   + \beta^2}}\right) \, ds 
		\end{align*}
		Let us decompose
		\begin{align*}
			2J &= \sum_{k=0}^{[\sqrt{n}]-1} + \sum_{k=[\sqrt{n}]}^{n-1 - [\sqrt{n}]} + \sum_{k=n - [\sqrt{n}]}^{n-1} =   J_1 + J_2 + J_3
		\end{align*}
		
		We claim that
		\begin{equation}\label{J1J3}
			J_1 = J_3.
		\end{equation}
		Indeed, we check they have the same number of elements which can be redistributed as follows
		\begin{equation}\label{ee1}
			J_1 = \sum_{k=0}^{[\sqrt{n}]-1} (a_k + b_k) =
			\sum_{k=n-[\sqrt{n}]}^{n-1}  (a_{n-k-1} + b_{n-k-1} )
		\end{equation}
		where
		\begin{align*}
			a_{n-k-1} &+ b_{n-k-1} = \int_0^{\pi }  \left( \frac{1}{\sqrt{  n^2 \sin^2 ( {(n-k-1 )\pi + \alpha \over n}   )   + \beta^2} }- \frac{1}{\sqrt{ n^2  \sin^2 ( {s+ (n-k-1) \pi + \alpha  \over n } )   + \beta^2}}\right) \, ds \\
			&+  \int_0^{\pi }  \left( \frac{1}{\sqrt{  n^2 \sin^2 ( {(n-k ) \pi - \alpha \over n}   )   + \beta^2} }- \frac{1}{\sqrt{ n^2  \sin^2 (  {-s+ (n-k)  \pi -\alpha  \over n } )   + \beta^2}}\right) \, ds \\
			&= \int_0^{\pi }  \left( \frac{1}{\sqrt{  n^2 \sin^2 ( {(k+1 )\pi - \alpha \over n}   )   + \beta^2} }- \frac{1}{\sqrt{ n^2  \sin^2 ( {s \pi - k \pi + \alpha  \over n } )   + \beta^2}}\right) \, ds \\
			&+  \int_0^{\pi }  \left( \frac{1}{\sqrt{  n^2 \sin^2 ( {k \pi + \alpha \over n}   )   + \beta^2} }- \frac{1}{\sqrt{ n^2  \sin^2 (  {-s-k  \pi -\alpha  \over n } )   + \beta^2}}\right) \, ds \\
			&= \int_0^{\pi }  \left( \frac{1}{\sqrt{  n^2 \sin^2 ( {(k+1 )\pi - \alpha \over n}   )   + \beta^2} }- \frac{1}{\sqrt{ n^2  \sin^2 ( {s \pi - k \pi + \alpha  \over n } )   + \beta^2}}\right) \, ds \\
			&+  \int_0^{\pi }  \left( \frac{1}{\sqrt{  n^2 \sin^2 ( {k \pi + \alpha \over n}   )   + \beta^2} }- \frac{1}{\sqrt{ n^2  \sin^2 (  {s+k  \pi +\alpha  \over n } )   + \beta^2}}\right) \, ds .
		\end{align*}
		Observe now that the  second term of the last equality  can be written as 
		\begin{align*}
			\int_0^\pi & \frac{1}{\sqrt{ n^2  \sin^2 ( {s-\pi - k \pi + \alpha  \over n } )   + \beta^2}} ds  = \quad (s-\pi = -t)\\
			&= \int_0^\pi  \frac{1}{\sqrt{ n^2  \sin^2 ( {t +k \pi - \alpha  \over n } )   + \beta^2}} dt.
		\end{align*}
		Thus we conclude  that
		$$
		a_{n-k-1} + b_{n-k-1} = a_k + b_k .
		$$
		Inserting this identity in \eqref{ee1}, we conclude that $J_1=J_3$.
		
		\medskip
		Let us analyze the term $J_1$. Thus we are assuming $0\leq k \leq [\sqrt{n}]$.

		For $0\leq k \leq [\sqrt{n}]$, we Taylor expand the first integrand in the definition of $a_k$ 
		\begin{align*}
			& \left(  n^2 \sin^2 ( {k \pi + \alpha \over n}   )   + \beta^2 \right)^{-{1\over 2}}= 
			\left( (k\pi + \alpha)^2 (1-{1\over 3} ({k\pi +\alpha  \over n})^2 + O({k\over n})^4 ) +\beta^2 \right)^{-{1\over 2}}\\
			&= \left(1+{1\over 6} ({k\pi +\alpha  \over n})^2 + O({k\over n})^4 \right) \left( (k\pi + \alpha)^2  +{\beta^2 
				\over  (1-{1\over 3} ({k\pi +\alpha  \over n})^2 + O({k\over n})^4 ) } \right)^{-{1\over 2}} \\
			&= \left(1+{1\over 6} ({k\pi +\alpha  \over n})^2 + O({k\over n})^4 \right) \left( (k\pi + \alpha)^2 + \beta^2 \right)^{-{1\over 2} } \\
			& \quad \quad \quad \times 
			\left(1- {\beta^2 \over (k\pi + \alpha)^2 + \beta } ({1\over 6} ({k\pi +\alpha  \over n})^2 + O({k\over n})^4) )  \right) \\
			&= \left(1+{1\over 6} ({k\pi +\alpha  \over n})^2  + O({1\over n^2}) \right) \left( (k\pi + \alpha)^2 + \beta^2\right)^{-{1\over 2} }\\
			&= \left(1+{1\over 6} {k^2\pi^2  \over n^2} \right) \left( (k\pi + \alpha)^2 + \beta^2\right)^{-{1\over 2} } + \left( O({k\over n^2})  + O({1\over n^2}) \right)  \left( (k\pi + \alpha)^2 + \beta^2\right)^{-{1\over 2} }
			.
		\end{align*}
		Similarly we get, for $t \in (0,\pi) $
		\begin{align*}
			\left(  n^2 \sin^2 ( {t+k \pi + \alpha \over n}   )   + \beta^2 \right)^{-{1\over 2}}&=   
			\left(1+{1\over 6} {k^2\pi^2  \over n^2} \right) \left( (t+ k\pi + \alpha)^2 + \beta^2 \right)^{-{1\over 2} } \\
			&+ \left( O({k\over n^2})  + O({1\over n^2}) \right)  \left( (t+ k\pi + \alpha)^2 + \beta^2\right)^{-{1\over 2} }
			.
		\end{align*}
		Therefore, setting
		\begin{equation}\label{defell} \ell (s) = \sqrt{ (s+k\pi)^2 + \beta^2}, 
		\end{equation}
		we get
		\begin{align*}
			a_k &= {1+{1\over 6} {k^2\pi^2  \over n^2} \over \ell (\alpha  ) } \int_0^\pi {s^2 + 2s (k\pi + \alpha ) \over \ell (s+\alpha )  [ \ell (\alpha ) + \ell (s+\alpha )   ]} ds \\
			&+ \left( O({k\over n^2})  + O({1\over n^2}) \right)  \left( \int_0^\pi {ds \over  \ell (\alpha ) }\,   + \int_0^\pi {ds \over \ell (\alpha +s) }\,   \right) 
		\end{align*}
		and
		\begin{align*}
			b_k &= {1+{1\over 6} {k^2\pi^2  \over n^2} \over \ell (  \pi - \alpha )  }  \int_0^\pi {s^2 - 2s ( \pi - \alpha ) \over \ell ( \pi - \alpha - s) [ \ell ( \pi - \alpha ) + \ell ( \pi - \alpha -s) ] } ds \\
			&+ \left( O({k\over n^2})  + O({1\over n^2}) \right)  \left( \int_0^\pi {ds \over \ell (\pi-\alpha ) \, } + \int_0^\pi {ds \over \ell (\pi-\alpha - s )} \,  \right) .
		\end{align*}
		The above expressions can be further expanded
		\begin{align*}
			a_k &= {1\over 2 \pi^2 k^2} 
			\left( 1+{1\over 6} {k^2\pi^2  \over n^2} \right) \int_0^\pi 2s\, ds \\
			&+   (1+{1\over 6} {k^2\pi^2  \over n^2})  \int_0^\pi    {2 s \, \over \sqrt{ (1 + {\alpha \over k \pi}  )^2 + {\beta^2 \over (k\pi)^2} }\,  \ell (s+\alpha ) [ \ell (\alpha ) + \ell (s+\alpha)]  } -{s \over  \pi^2 k^2}\, ds \\
			&+ {1+{1\over 6} {k^2\pi^2  \over n^2} \over \sqrt{ (k\pi + \alpha )^2 + \beta^2 } }\int_0^\pi {s^2 +2s\alpha  \over \ell (s+\alpha ) [ \ell (\alpha ) + \ell (s+\alpha)]  } ds \\
			&+ \left( O({k\over n^2})  + O({1\over n^2}) \right)  \left( \int_0^\pi \frac 1 {\ell (\alpha ) } \, ds  + \int_0^\pi \frac {ds} {\ell (\alpha +s )}  \right),\\
			b_k&= - {1\over 2\pi^2 k^2} \left( 1+{1\over 6} {k^2\pi^2  \over n^2} \right) \int_0^\pi 2s\, ds \\
			&+   ( 1+{1\over 6} {k^2\pi^2  \over n^2})    \int_0^\pi    {-2 s \,  \over \sqrt{ (1 + {\pi - \alpha \over k \pi}  )^2 + {\beta^2 \over (k\pi)^2}  } \, \ell (\pi - \alpha - s) [ \ell (\pi - \alpha ) + \ell (\pi - \alpha -s) ]  } +{s \over \pi^2 k^2}  \, ds \\
			&+ {1+{1\over 6} {k^2\pi^2  \over n^2} \over \sqrt{ ((k+1) \pi   -\alpha )^2 + \beta^2 } }\int_0^\pi {s^2 - 2s (\pi - \alpha )  \over  \ell (\pi - \alpha - s) [ \ell (\pi - \alpha ) + \ell (\pi - \alpha -s) ] } ds \\
			&+ \left( O({k\over n^2})  + O({1\over n^2}) \right)  \left( \int_0^\pi \frac 1{\ell  (\pi-\alpha ) } + \int_0^\pi  \frac 1 {\ell (\pi-\alpha - s ) } \, ds \right) \\
		\end{align*}
		so that 
		\begin{align*}
			a_k + b_k &=   i + ii + iii + iv \\
			&+ \left( O({k\over n^2})  + O({1\over n^2}) \right)  \left( \int_0^\pi {ds \over \ell (\alpha )} \,   + \int_0^\pi {ds \over \ell (\alpha +s ) } \,   \right),\\
			&+ \left( O({k\over n^2})  + O({1\over n^2}) \right)  \left( \int_0^\pi {ds \over \ell(\pi-\alpha ) }\,  + \int_0^\pi {ds \over \ell (\pi-\alpha - s ) }\,  \right) , \quad {\mbox {with}} \\
			i&=(1+{1\over 6} {k^2\pi^2  \over n^2})  \int_0^\pi    {2 s \, \over \sqrt{ (1 + {\alpha \over k \pi}  )^2 + {\beta^2 \over (k\pi )^2}}\,  \ell (s+\alpha ) [ \ell (\alpha ) + \ell (s+\alpha)]  } -{ s \over \pi^2 k^2}  \, ds \\
			ii&=( 1+{1\over 6} {k^2\pi^2  \over n^2})    \int_0^\pi    {-2 s \,  \over \sqrt{ (1 + {\pi - \alpha \over k \pi}  )^2 + {\beta^2 \over (k\pi)^2} } \, \ell (\pi - \alpha - s) [ \ell (\pi - \alpha ) + \ell (\pi - \alpha -s) ]  } +{s \over \pi^2 k^2}  \, ds  \\
			iii&= {1+{1\over 6} {k^2\pi^2  \over n^2} \over \sqrt{ (k\pi + \alpha )^2 + \beta^2 } }\int_0^\pi {s^2 +2s\alpha  \over \ell (s+\alpha ) [ \ell (\alpha ) + \ell (s+\alpha)]    } ds \\
			iv&= {1+{1\over 6} {k^2\pi^2  \over n^2} \over \sqrt{ (  \pi - \alpha )^2 + \beta^2 } }\int_0^\pi {s^2 - 2s (\pi - \alpha )  \over  \ell (\pi - \alpha - s) [ \ell (\pi - \alpha ) + \ell (\pi - \alpha -s) ] } ds.
		\end{align*}
		Expanding $\ell$ given by \eqref{defell} as $k $ becomes large, we compute
		\begin{align*}
			i &= -{3\over 2 \pi^3 k^3} \int_0^\pi (2\alpha + s ) \, ds +{1\over k^4} F_k (\alpha , \beta) \\
			ii&= {1\over  \pi^3 k^3} \int_0^\pi (s^2 + 2 \alpha s ) \, ds +{1\over k^4} F_k (\alpha , \beta) \\
			iii&= {3\over 2 \pi^3 k^3} \int_0^\pi (2\pi -2\alpha - s ) \, ds +{1\over k^4} F_k (\alpha , \beta) \\
			iv&= {1\over  \pi^3 k^3} \int_0^\pi (s^2 - 2s (\pi - \alpha)  ) \, ds +{1\over k^4} F_k (\alpha , \beta)
		\end{align*}
		where $F_k$ denotes a generic function, whose precise definition changes from line to line, with the property that it is bounded,  together with its derivatives $\partial_\alpha F_k$, $\partial_\beta F_k $.
		We check that 
		\begin{align*}
			-{3\over 2 \pi^3 } \int_0^\pi (2\alpha + s ) \, ds &
			+ {1\over  \pi^3 } \int_0^\pi (s^2 + 2 \alpha s ) \, ds \\
			&+{3\over 2 \pi^3 } \int_0^\pi (2\pi -2\alpha - s ) \, ds + {1\over  \pi^3 } = ({1\over 6} - {\alpha \over 2\pi})
		\end{align*}
		We conclude that
		\begin{equation}\label{J1est}
			\begin{aligned}
				J_1&= F(\alpha , \beta)  + {1\over n}  \, F_n (\alpha , \beta) 
			\end{aligned}
		\end{equation}
		where $F$, $F_n$ are uniformly  bounded, as $n \to \infty$, together with their derivatives in $\alpha $ \and $\beta$.

		\medskip
		We now consider
		$$
		J_2 = \sum_{k=[\sqrt{n}]}^{n-1-[\sqrt{n}]} a_k + b_k.
		$$
		We write
		\begin{align*}
			J_2 &= {1\over n} \sum_{k=[\sqrt{n}]}^{n-1-[\sqrt{n}]} \int_0^\pi \left( f({\alpha \over n}) - f ({\alpha + s \over n}) + f({\pi-\alpha \over n}) - f ({\pi +s -\alpha \over n}) \right)\, ds\\
			& \quad {\mbox {with}} \\
			f(\theta ) & = {1 \over \sqrt{ \sin^2 ({k\pi \over n} + \theta)+{\beta^2 \over n^2}}}.
		\end{align*}
		We have
		\begin{align*}
			\sqrt{  \sin^2 ({k\pi \over n} + {\theta \over n} ) +{\beta^2 \over n^2} }&= \sin ({k \pi\over n})
			\left( (\cos {\theta \over n} + \cot {k\pi \over n} \sin {\theta \over n} )^2  +{\beta^2 \over n^2 \, \sin^2 ({k \pi\over n})}  \right)^{1\over 2} \\
			&= \sin ({k \pi\over n})
			\left( 1+ 2 {\theta \over n} \cot{k\pi \over n} + O({1\over n^2})  \right)^{1\over 2}\\
			&= \sin ({k \pi\over n})
			\left( 1+  {\theta \over n} \cot{k\pi \over n} + O({1\over n^2})  \right).
		\end{align*}
		Hence
		\begin{align*}
			J_2&= {1\over n} \sum_{k=[\sqrt{n}]}^{n-1-[\sqrt{n}]} {\cot {k\pi \over n} \over \sin {k\pi \over n}} \int_0^\pi \left( {2s \over n} + O({1\over n^2}) \right) \, ds \\
			&= {1\over n^2} \sum_{k=[\sqrt{n}]}^{n-1-[\sqrt{n}]} {\cot {k\pi \over n} \over \sin {k\pi \over n}}  \left( \pi^2 + O({1\over n}) \right) \\
			&= {1\over n^2} \sum_{k=[\sqrt{n}]}^{n-1-[\sqrt{n}]} {\cos {k\pi \over n} \over \sin^2 {k\pi \over n}}  \left( \pi^2 + O({1\over n}) \right).
		\end{align*}
		Relabelling $k $ with $n-k$, we get
		\begin{align*}
			\sum_{k=[\sqrt{n}]}^{n-1-[\sqrt{n}]} &{\cos {k\pi \over n} \over \sin^2 {k\pi \over n}} =  \sum_{k=[\sqrt{n}]+1}^{n-[\sqrt{n}]} {\cos {(n-k) \pi \over n} \over \sin^2 {(n-k)\pi \over n}} \\
			&=  - \sum_{k=[\sqrt{n}]+1}^{n-[\sqrt{n}]} {\cos {k \pi \over n} \over \sin^2 {k \pi \over n}}\\
			&= - \sum_{k=[\sqrt{n}]}^{n-1-[\sqrt{n}]} {\cos {k \pi \over n} \over \sin^2 {k \pi \over n}} + {\cos {[\sqrt{n}] \over n} \pi\over \sin^2 {[\sqrt{n}] \over n} \pi } -
			{\cos {n-  [\sqrt{n}] \over n} \pi\over \sin^2 {n-[\sqrt{n}] \over n} \pi} \\
			&=  - \sum_{k=[\sqrt{n}]}^{n-1-[\sqrt{n}]} {\cos {k \pi \over n} \over \sin^2 {k \pi \over n}} + {2\cos {[\sqrt{n}] \over n} \pi\over \sin^2 {[\sqrt{n}] \over n} \pi } .
		\end{align*}
		Thus
		\begin{align*}
			\sum_{k=[\sqrt{n}]}^{n-1-[\sqrt{n}]} {\cos {k \pi \over n} \over \sin^2 {k \pi \over n}} &= {\cos {[\sqrt{n}] \over n} \pi\over \sin^2 {[\sqrt{n}] \over n} \pi }   , \quad {\mbox {and}}
		\end{align*}
		\begin{equation}\label{J2est}
			J_2= {\pi^2 \over n^2} \, {\cos {[\sqrt{n}] \over n} \pi\over \sin^2 {[\sqrt{n}] \over n} \pi }  \, (1+ O({1\over n})) = {\pi^2 \over n} \,  \, (1+ O({1\over n})) .
		\end{equation}
		Putting together \eqref{J1J3}, \eqref{J1est} and \eqref{J2est}, we obtain \eqref{ee3}.
		
	\end{proof}

	\medskip
	We next prove Lemma \ref{lemma1X}.
	
	\medskip

	\begin{proof}[Proof of Lemma \ref{lemma1X}.]
		We write
		\begin{align*}
			\tilde N_{\Sigma^n_h} (y)  &= N_1[h] (y) + N_2[h] (y)\\
			N_1[h] (y) &= \int_{\tilde \Omega^n_h  }\dfrac{d \tilde x}{|X(y) -\tilde x|}\\
			N_2[h] (y) &= \int_{\tilde \Omega^n_h  } \left(\dfrac{1}{|X(y_h) -\tilde x|} - \dfrac{1}{|X(y) -\tilde x|}\right) d \tilde x
		\end{align*}
		Here we use the same notations as in \eqref{N2}.
		
		We start with $N_2$. Thus we readily get
		$$
		N_2 [0] (y) = 0.
		$$
		Arguing as in the proof of Lemma \ref{expN}, we can show that there exists a constant $C>0$ such that
		$$
		| N_2 [h] (y)| \leq C \, \left(\sum_{j=1}^n {1\over j^2} \right) \, \| h \|_{L^\infty}
		$$
		for all $n$ large. 
		Assume now that $h$ is even in $y_2$. We aim at showing that $N_2$ is the sum of a term which is even in $y_2$ plus another term which is $n^{-1}$ smaller, as in \eqref{exptildeNn}. We start with the change of variable 
		$\tilde x = X (x)$ as in \eqref{change}, we have
		\begin{align*}
			N_2[h] (y) &= \int_{ \Omega^n_h  } \left(\dfrac{1}{|X(y_h) -X( x)|} - \dfrac{1}{|X(y) - X(x)|}\right) (1+ {x_2 \over R}) d  x 
		\end{align*}
		and we just consider $\int_{ \Omega^n_h  } \left(\dfrac{1}{|X(y_h) -X( x)|} - \dfrac{1}{|X(y) - X(x)|}\right) dx$, as the other part is $n^{-1}$-smaller for $n$ large.
		
		Writing 
		\begin{align*}
			X(y_h) &= X(y) + T[h] (y), \quad T[h] (y) = h (y) \int_0^1 DX (y+ s h \nu) \, [\nu]\, ds
		\end{align*}
		we have that
		\begin{align*}
			X( y ) &= (y_1 , -y_2+R , y_3) + O(R^{-1})\\
			X( y) &- X(x) = (y_1 - x_1 ,  y_2 - x_2 , y_3 - x_3 ) + O(R^{-1})\\
			T[h] ( y)&= h(y) \nu ( y) + O(R^{-1})
		\end{align*}
		Hence
		\begin{align*}
			N_2[h] (y )&= \bar N_2[h] (y)  + {1\over n} \ell_2 [h] (y)\\
			\bar N_2[h](y) &= \int_{ \Omega^n_h  } |y -  x|^{-1} \left[ \left( 1+ 2 {h \nu (y) \cdot (y -  x) \over |y -  x|^2} + { h^2 \over |y - x|^2} \right)^{-{1\over 2}} -1 \right]
		\end{align*}
		with $\ell_2 [0] =0$ and $ |\ell_2 [h] (y) | \leq C \| h \|_{L^\infty}$ for some constant $C$. Using the symmetries of the domain $\Omega_h^n$, we can show that
		$$
		\bar N_2 [h] (\hat y ) = \bar N_2 [h] (y), \quad \hat y = (y_1, - y_2 , y_3)
		$$
		since $h$ is assumed to be even in $y_2$.

		\medskip
		Let us now consider $N_1 [h] (y)$.
		We decompose the domain $\tilde \Omega^n_h$ into
		$$
		\tilde \Omega^n_h = \tilde \Omega^n \cup \left( \tilde \Omega^n_h \setminus \tilde \Omega^n \right) 
		$$
		and we split 
		\begin{align*}
			N_1[h] (y) &= N_{\Sigma^n} (y)+ \bar N_1 [h] (y),\\
			\bar N_1 [h] (y)&= 
			\int_{\tilde \Omega^n_h \setminus \tilde \Omega^n }\dfrac{d \tilde x}{|X(y) -\tilde x|}.
		\end{align*}
		We refer to \eqref{N1} for the definition of $N_{\Sigma^n} (y)$.

		\medskip
		As in the prof of Lemma \ref{expN}, we change variable $\tilde x = X(x)$, with $x \in \Omega_h^n \setminus \Omega^n$ to get
		$$
		\bar N_1 [h] (y)=
		\int_{ \Omega^n_h \setminus  \Omega^n }\dfrac{ (1+ {x_2 \over R} ) \, d  x}{|X(y) -X(x)|},
		$$
		since $d \tilde x =(1+ {x_2 \over R} ) \, d  x $.
		We perform a second change of variable in $\Omega^n_h \setminus  \Omega^n $
		$$
		x = z + s h (z ) \, \nu (z) , \quad z \in \Sigma^n, \quad s \in (0,1).
		$$
		Thus
		\begin{align*}
			\bar N_1 [h] (y)&=
			\int_{\Sigma^n  } h(z )  \, d \sigma (z) \, \int_0^1 \dfrac{ (1+ {(z + s h (z ) \, \nu (z))_2 \over R} ) \, }{|X(y) -X(z + s h (z ) \, \nu (z))|} \, ds \\
			&= \int_{\Sigma^n  }  \,  \dfrac{ h(z )  \, d \sigma (z)\, }{|X(y) -X(z) |} \, + \hat N_1 [h] (y) 
		\end{align*}
		with
		$$
		\hat N_1 [0] (y) = (D_h \hat N_1)[0](y) = 0, 
		\quad 
		| \hat N_1 [h] (y) | \leq C \| h \|_{L^\infty}^2,
		$$
		for some positive constant $C$ uniformly bounded as $n \to \infty$. On the other hand, writing $\Sigma^n = \cup_{k=0}^{n-1} \Sigma_k$ as in \eqref{truncated} and using the change of variable
		$$
		z = z_0 + k T \, e_3\in \Sigma_k, \quad z_0 \in \Sigma_0 , \quad e_3 = (0,0,1)
		$$
		we have
		\begin{align*}
			\int_{\Sigma^n  }  \, & \dfrac{ h(z )  \, d \sigma (z) \, }{|X(y) -X(z) |} = \sum_{k=0}^{n-1} \int_{\Sigma_k  } \, \dfrac{ h(z )  \, d \sigma (z)\, }{|X(y) -X(z) |} \\
			&= \sum_{k=0}^{n-1} \int_{\Sigma_0  }  \,  \dfrac{ h(z_0 + kT \, e_3 )  \, d \sigma (z_0)\, }{|X(y) -X(z_0 + kT \, e_3 ) |} \, \\
			&=\sum_{k=0}^{n-1} I_k (y), \quad {\mbox {where}}\\
			I_k (y) &= \int_{\Sigma_0  }  \, \ \dfrac{ h(z_0  )  \, d \sigma (z_0)\, }{\sqrt{|\bar y - \bar z_0|^2 + a_{Rk}^2 (1+{y_2 + z_{02} \over R} + {y_2 z_{02} \over R^2} )}} 
		\end{align*}
		with
		$$
		a_{Rk} =   2\, R \, \sin \left({k T \over 2R } + {y_{3} - z_{03} \over 2\, R} \right).$$ 
		To get this formula we use 
		that $h$ satisfies the symmetries \eqref{simh} and we argue 
		in a similar way as in the proof of Lemma \ref{expN}, formula \eqref{not1}. 
		
		\medskip
		We follow the arguments to prove \eqref{ee2}-\eqref{ee3}- \eqref{ee4}-\eqref{ee5} in the proof of Lemma \ref{expN} to get
		
		\begin{align*}
			\int_{\Sigma^n  }  \, & \dfrac{ h(z )  \, d \sigma (z) \, }{|X(y) -X(z) |} =  \left( {2  \over T} \, \ln n \, + b(y) \right) \, \int_{\Sigma_0  }  \, \  h(z_0  )  \, d \sigma (z_0)\,  + N_3 [h] (y)  
		\end{align*}
		with $b$ as in the statement of Lemma \ref{lemma1X} and
		$$
		N_3 [0] (y) = 0, \quad 
		| N_3 [h] (y)| \leq C  \, \| h \|_{L^\infty}
		$$
		for some constant $C$ uniformly bounded in $n$. Arguing as before, we can show that $N_3[h] (y)$ is even in $y_2$, if $h$ is.
		
		This gives the expansion in \eqref{exptildeNn}. 
		We conclude with the remark that
		$y \to \ell_i [h] (y)$ in \eqref{exptildeNn} satisfies \eqref{simh}, as consequence of Remark \ref{rem2}, Lemma \ref{expN} and the above argument.

	\end{proof}

	\section{Invertibility of the Jacobi operator}
	
	We combine the results of Propositions \ref{lemma1} and \ref{expN0} to re-write Problem \eqref{3} in one to find $h: \Sigma \to \R$ and constants $\gamma$, $\lambda$ such that
	\be \label{newP}
	\begin{aligned}
		J_{\Sigma^n} [h]   &= E_\gamma (y) 
		+ \, \gamma \, \left( {2  \over T} \, \ln n \, + b(y) \, \right) \,\,\int_{\Sigma_0  }   h  
		+ \, \gamma \, \ell_1 [h] (y) \\
		&+ n^{-1} \, \ell [h, D h, D^2 h] (y)
		+ q[h, D h , D^2 h] (y) + \lambda \\
		& {\mbox {where}} \\
		E_\gamma(y) &= \gamma \,  F_0^n (y_{3} ) + \frac {\gamma}n F_1^n (y)   \\
		&+
		{  y_2 \over  n } [ F(y_3) - \gamma \, {\ln n }  \,   \, {2\pi |\Omega_0 | \over  T^2}] , \\			
		F(y_3) =& \frac {2\pi } {T f} \Biggl(    { (2-(f')^2 ) \, f \, f'' \over (1+ (f')^2 )^{5\over 2}} +{1 + 3 (f')^2 \over (1+ (f')^2 )^{3\over 2}} \Biggl),
	\end{aligned}
	\ee
	Here
	$J_{\Sigma^n}$ denotes the Jacobi operator of $\Sigma^n$ defined by \eqref{Jacobi}, 
	$|\Omega_0|$ is the volume of the region enclosed by $\Sigma_0$, see \eqref{sigma0} and $T$ is the period of the Delaunay surface $\Sigma$. The functions $F_0^n$, $F_1^n$, $F$ and $b$ are smooth in their arguments and uniformly bounded together with their derivatives, as $n \to \infty$. They satisfy the symmetry assumptions \eqref{simh}.
	In addition, 
	if $h$ is even with respect to $y_2$, then the functions 
	$ \ell_1 [h] (y), \  q[h,Dh, D^2 h ] (y)
	$
	are even in $y_2.$	 The function $E_\gamma$ in \eqref{newP} satisfies the symmetries and periodicity conditions  \equ{simh}.
	
	For $h \in C^{2,\alpha}$ satisfying \eqref{simh}, $\ell_1 [h]$, $\ell [h, Dh , D^2 h]$ and $q [h, Dh , D^2 h]$ satisfy \eqref{simh}.  Besides, there exists a constant $C$ such that
	\be \label{chepalle}
	\begin{aligned}
		&\| \ell_1 [h] \|_{C^\alpha} + \| \ell [h, Dh , D^2 h] \|_{C^\alpha} \leq C \| h\|_{C^{2, \alpha} }, \quad \\
		&\| q [h, Dh , D^2 h] \|_{C^\alpha} \leq C \| h\|_{C^{2, \alpha} }^2.
	\end{aligned}
	\ee
	
	The proofs of Propositions \ref{lemma1} and \ref{expN0} yields uniform Lipschitz dependence on $h$ of $ \ell_1$, $ \ell$ and $q$ in \eqref{newP}. Precisely, there exists $C$ such that
	\be \label{chepalle1}
	\begin{aligned}
		\|  \ell_1 [h_1] - \ell_1 [h_2] \|_{C^\alpha} &\leq C  \| h_1 - h_2 \|_{C^{2, \alpha} } \\
		\|  \ell [h_1 , Dh_1 , D^2 h_1] - \ell [h_2, Dh_2 , D^2 h_2] \|_{C^\alpha} &\leq C  \| h_1 - h_2 \|_{C^{2, \alpha} }
	\end{aligned}
	\ee
	and
	\be\label{chepalle2}
	\begin{aligned}     & \|  q [h_1 , Dh_1 , D^2 h_1] - q [h_2, Dh_2 , D^2 h_2] \|_{C^\alpha} \\
		& \leq C \left( \| h_1\|_{C^{2, \alpha} } + \| h_2\|_{C^{2, \alpha} } \right)  \| h_1 - h_2 \|_{C^{2, \alpha} }
	\end{aligned}
	\ee

	\medskip
	While all these terms are in principle defined only on $\Sigma^n$, periodicity allows us to extend them to all $\Sigma$ naturally. 
	
	\medskip
	\subsubsection{Invertibility of the Jacobi operator}
	Let us consider a Delaunay surface $\Sigma $  with neck size $a\in (0, \frac 12 )$ and period $T$, as introduced in Section \S \ref{d} and its Jacobi operator 
	$$
	J_\Sigma [h]  =  \Delta_\Sigma h + |A_\Sigma|^2 h 
	$$ 
	defined for real functions $h \in C^{2,\alpha} (\Sigma)$ which are periodic in $y_3$.
	This subsection deals with solving a linear problem of the  form 
	\be\label{linear}
	J_\Sigma [h] = E(y)  \inn \Sigma
	\ee
	for right hand sides $E(y)$ that are $T$-periodic in $y_3$ and even in the variables $y_3$ and $y_1$. 
	So, we consider $E\in C^\alpha (\Sigma)$ that satisfies  
	\be\label{simh1} \begin{aligned}
		E(y_1, y_2, y_3) = &E(-y_1 , y_2 , y_3 ) , \\  E(y_1, y_2 , y_3) =& E(y_1, y_2 , -y_3 ), \\ E(y_1, y_2 , y_3) =& E(y_1, y_2 , y_3 +T).
	\end{aligned}
	\ee 
	To solve \equ{linear} in a suitable class of right-hand sides $E$ satisfying \equ{simh1}, we need to identify the class of Jacobi fields that are $T$-periodic in $y_3$.  The 
	invariance of the mean curvature of $\Sigma$ under the rigid motions represented by translations along the 3 coordinate axes, gives that the 3 components of the normal vector are Jacobi fields, namely 
	$$
	J_\Sigma[ \nu_j] =0, \quad    j=1,2,3. 
	$$
	It is known that these functions actually span the space of all $T$-periodic in $y_3$ Jacobi fields. In other words,
	\begin{lemma}\label{lema1}
		If 
		$ Z\in C^2 (\Sigma) $ is  $T$-periodic in the variable $y_3$   then
		$$
		J_\Sigma[Z]=0 \implies  \exists\  \alpha_1, \alpha_2, \alpha_3 \in \R \mid \  Z= \sum_{j=1}^3 \alpha_j \nu_j . 
		$$ 
	\end{lemma}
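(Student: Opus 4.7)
The strategy is to exploit the rotational symmetry of $\Sigma$ about the $y_3$-axis and separate variables. Writing
\[
Z(\theta,y_3)=\sum_{k\in\Z}Z_k(y_3)\,e^{ik\theta},
\]
the Jacobi operator, whose coefficients depend only on $y_3$, decouples into the sequence of ODEs
\[
L_k[Z_k] \ :=\ \frac{1}{f\sqrt{1+(f')^2}}\left(\frac{f}{\sqrt{1+(f')^2}}\,Z_k'\right)'+\left(|A|^2-\frac{k^2}{f^2}\right)Z_k\ =\ 0,
\]
with each $Z_k$ required to be $T$-periodic. The problem reduces to showing that the $T$-periodic kernel of $L_k$ is one-dimensional for $k=0,\pm 1$ and trivial for $|k|\ge 2$.

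For $k=0$, from \eqref{normal} the third component $\nu_3=-f'/\sqrt{1+(f')^2}$ belongs to $\ker L_0$ and is $T$-periodic; for $k=\pm 1$, the common radial profile $1/\sqrt{1+(f')^2}$ of $\nu_1=\cos\theta/\sqrt{1+(f')^2}$ and $\nu_2=\sin\theta/\sqrt{1+(f')^2}$ belongs to $\ker L_{\pm 1}$ and is also $T$-periodic. To see these generate the full $T$-periodic kernel in each case, I would construct the second linearly independent solution by reduction of order and verify it is not $T$-periodic. Geometrically, the non-periodic second solution in mode $k=0$ is the Jacobi field produced by varying the neck parameter $a$: since the period $T_a$ depends non-trivially on $a$, this variation introduces a nonzero linear drift per period. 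For $|k|=1$ the non-periodic second solution is the Jacobi field associated to infinitesimal rotations of ambient $\R^3$ about a horizontal axis, which carries an explicit factor of $y_3$ modulo $T$-periodic pieces.

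For $|k|\ge 2$ I would argue by an energy identity. Multiplying $L_k[Z_k]=0$ by $\overline{Z_k}\,f\sqrt{1+(f')^2}$ and integrating over one period, all boundary terms vanish by periodicity and one obtains
\[
\int_{-T/2}^{T/2}\frac{f}{\sqrt{1+(f')^2}}|Z_k'|^2\,dy_3\ =\ \int_{-T/2}^{T/2}\left(|A|^2-\frac{k^2}{f^2}\right)f\sqrt{1+(f')^2}\,|Z_k|^2\,dy_3.
\]
The key pointwise estimate is $f^2|A|^2\le 2(a^2+(1-a)^2)<2$ on $\Sigma$. This follows from $f\kappa_1=1/\sqrt{1+(f')^2}$ and $f\kappa_2=2f-1/\sqrt{1+(f')^2}$ together with the Delaunay first integral $f/\sqrt{1+(f')^2}=f^2+a(1-a)$, which gives $(f\kappa_1)^2+(f\kappa_2)^2=2\bigl(f^2+a^2(1-a)^2/f^2\bigr)$, a convex function of $f^2$ whose maximum on $[a,1-a]$ is attained at the endpoints. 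Hence for $|k|\ge 2$ we have $|A|^2-k^2/f^2<0$ strictly on $\Sigma$; the right-hand side of the identity is then strictly negative unless $Z_k\equiv 0$, while the left-hand side is non-negative, forcing $Z_k\equiv 0$.

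Combining the three conclusions, $Z=\alpha_1\nu_1+\alpha_2\nu_2+\alpha_3\nu_3$ as claimed. I expect the main obstacle to be the verification in step two that the second independent solutions for $k=0,\pm 1$ fail to be $T$-periodic, whose geometric core depends on nontriviality of $T_a$ and on the correct identification of ambient rotations as the source of non-periodic Jacobi fields; the $|k|\ge 2$ case then reduces to the clean energy estimate made transparent by the Delaunay first integral.
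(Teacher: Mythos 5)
The paper itself does not prove this lemma: it cites Caldiroli--Iacopetti--Musso, Lemma 2.7. Your Fourier decomposition in $\theta$ plus mode-by-mode ODE analysis is precisely the standard route in that and related literature (e.g.\ Mazzeo--Pacard), so there is no methodological divergence to discuss. The $|k|\ge 2$ energy argument is complete and correct; in particular, computing $(f\kappa_1)^2+(f\kappa_2)^2 = 2\bigl(f^2 + a^2(1-a)^2/f^2\bigr)$ from $H\equiv 2$ and the first integral, then maximizing the convex function of $u=f^2\in[a^2,(1-a)^2]$ at the endpoints to get $f^2|A|^2\le 2(a^2+(1-a)^2)<2<k^2$, is exactly the right estimate. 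The $|k|=1$ case is also easy to close: the rotation Jacobi field $\nu\cdot(e\times y)$ has radial profile $-(y_3+ff')/\sqrt{1+(f')^2}$, whose Floquet shift over one period is $-T/\sqrt{1+(f')^2}$, manifestly nonzero.

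The one genuine gap is in the $k=0$ mode. You identify the second solution with the Jacobi field from varying the Delaunay parameter $a$ and observe its Floquet shift is proportional to $\partial_a T_a$, but ``$T_a$ depends non-trivially on $a$'' does not rule out a critical point of $a\mapsto T_a$; the argument requires $\partial_a T_a\ne 0$ at the specific $a$ under consideration. You need to either invoke the known strict monotonicity of $T_a$ on $(0,1/2)$ (increasing from $2$ to $\pi$) with a citation, or supply a direct argument (e.g.\ via an explicit reduction-of-order integral, or a Sturm-type argument). This nonvanishing is the crux of the $k=0$ nondegeneracy and cannot be left implicit. With that point filled in, the proof is complete and matches the cited reference in spirit.
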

	See for instance \cite{Caldiroli Musso Iacopetti} Lemma 2.7, for a proof.

	Rather than  solving directly Problem \equ{linear} we consider the following projected version of it, as follows  
	\be \label{pp}
	\begin{aligned}
		J_\Sigma [h] &= E(y)  -c\nu_2   - d   \inn \Sigma,\\
		\int_{\Sigma_0} h  &= 0= \int_{\Sigma_0}  h\nu_2 .
	\end{aligned}
	\ee

	\begin{prop}\label{prop1}
		Let $\alpha \in (0,1)$ and let $E(y)\in C^\alpha (\Sigma) $ satisfy  the symmetries \equ{simh1}, namely $T$-periodic in $y_3$ and even in the $y_3$ and $y_1$ variables.   Then there exist scalars $c$ and $d$ given by \equ{cd} below and a unique solution $ h$ to problem \equ{pp} with the same symmetries. This $h$ defines a linear operator of $h=\mathcal T [E] $ with  
		$$  \| \mathcal T (E)\|_{C^{2,\alpha} (\Sigma)}  \le  C \|E\|_{C^\alpha (\Sigma)}. $$
	\end{prop}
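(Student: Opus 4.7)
We plan to invert $J_\Sigma$ on the subspace $\mathcal{X}_{\rm sym}\subset C^{2,\alpha}(\Sigma)$ of $T$-periodic functions satisfying \eqref{simh1}, projecting out the residual kernel and using $d$ as a Lagrange multiplier for the zero-mean constraint. By Lemma \ref{lema1}, any $T$-periodic Jacobi field is a combination of $\nu_1,\nu_2,\nu_3$. From \eqref{normal}, $\nu_1\propto\cos\theta$ is odd in $y_1$ and $\nu_3\propto -f'(y_3)$ is odd in $y_3$ (since $f$ is even), while $\nu_2\propto\sin\theta/\sqrt{1+(f')^2}$ is the only one compatible with \eqref{simh1}. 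Hence
$$
\ker J_\Sigma\cap\mathcal{X}_{\rm sym}=\R\,\nu_2.
$$
Viewing $J_\Sigma=\Delta_\Sigma+|A|^2$ as a self-adjoint elliptic operator on the closed quotient $\Sigma/T\Z$ restricted to the symmetric class, standard theory gives Fredholm index zero with closed range equal to the $L^2$-orthogonal complement of $\nu_2$ inside $C^\alpha_{\rm sym}$.

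An angular integration yields $\int_{\Sigma_0}\nu_2\,d\sigma=0$, so the constant function $1$ already belongs to that orthogonal complement, and so does $E-c\nu_2-d$ once
$$
c \;=\; \frac{\int_{\Sigma_0} E\,\nu_2\,d\sigma}{\int_{\Sigma_0}\nu_2^{2}\,d\sigma}.
$$
Let $\widetilde{\mathcal{X}}_{\rm sym}=\{h\in\mathcal{X}_{\rm sym}:\int_{\Sigma_0}h\nu_2\,d\sigma=0\}$, and let $L^{-1}$ denote the bounded inverse of $J_\Sigma\colon\widetilde{\mathcal{X}}_{\rm sym}\to\nu_2^{\perp}$ provided by the preceding Fredholm alternative. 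Setting $h_0:=L^{-1}(E-c\nu_2)$ and $\phi:=L^{-1}(1)$, the function $h_d:=h_0-d\phi\in\widetilde{\mathcal{X}}_{\rm sym}$ solves $J_\Sigma h_d=E-c\nu_2-d$ for every $d\in\R$, and the remaining constraint $\int_{\Sigma_0} h_d\,d\sigma=0$ uniquely determines
$$
d \;=\; \frac{\int_{\Sigma_0} h_0\,d\sigma}{\int_{\Sigma_0}\phi\,d\sigma},
$$
which, together with the formula for $c$, will be \eqref{cd}.

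The main obstacle is to verify $D:=\int_{\Sigma_0}\phi\,d\sigma\neq 0$. Equivalently, the augmented map $(h,c,d)\mapsto(J_\Sigma h+c\nu_2+d,\int_{\Sigma_0}h\nu_2,\int_{\Sigma_0}h)$ on $\mathcal{X}_{\rm sym}\times\R^{2}$ must be injective: pairing any kernel element with $\nu_2$ forces $c=0$ and reduces the task to ruling out nontrivial symmetric solutions of $J_\Sigma h=-d$, $d\neq 0$, with $\int h=\int h\nu_2=0$. Since the right-hand side $J_\Sigma\phi=1$ is $\theta$-independent, the uniqueness of $L^{-1}$ in the symmetric class places $\phi$ in the $\theta$-independent even-in-$y_3$ sector, where $J_\Sigma$ reduces to the second-order ODE
$$
\frac{1}{f\sqrt{1+(f')^2}}\,\frac{d}{dy_3}\!\left(\frac{f\,\phi'}{\sqrt{1+(f')^2}}\right)+|A|^2\phi \;=\; 1
$$
with the distinguished homogeneous solution $\nu_3=-f'/\sqrt{1+(f')^2}$ (which is odd, hence not in the admissible class); a Wronskian reduction against $\nu_3$ and its non-periodic partner expresses $\phi$ explicitly and recasts $D$ as an explicit non-vanishing integral in the Delaunay data $(f,T)$. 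Once $D\neq 0$, the displayed formulas define $c(E),d(E)$ as bounded linear functionals of $E$, and the estimate $\|h\|_{C^{2,\alpha}(\Sigma)}\leq C\|E\|_{C^\alpha(\Sigma)}$ follows from boundedness of $L^{-1}$ combined with standard Schauder theory for the elliptic operator $J_\Sigma$ on the closed surface $\Sigma/T\Z$.
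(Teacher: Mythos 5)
Your proposal takes essentially the same route as the paper: identify $\nu_2$ as the unique $T$-periodic Jacobi field compatible with the symmetries \eqref{simh1}, invert $J_\Sigma$ by Fredholm alternative in the symmetric class orthogonal to $\nu_2$, and obtain the two multipliers $c,d$ from testing against $\nu_2$ and against the distinguished solution of $J_\Sigma\bar h=1$. The paper's organization differs only cosmetically: it determines $c$ and $d$ upfront by testing the projected equation against $\nu_2$ and $\bar h$ (so that the corrected right-hand side is orthogonal to all of $\nu_1,\nu_2,\nu_3$ and Lemma \ref{lemma2} applies), then deduces $\int_{\Sigma_0}h=0$ a posteriori; you solve first for each $d$ and then adjust $d$ to enforce the zero-mean condition. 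Your formula $d=\int_{\Sigma_0}h_0/\int_{\Sigma_0}\phi$ agrees with \eqref{cd}, but you should remark that $\int_{\Sigma_0}h_0=\int_{\Sigma_0}E\,\bar h$ by self-adjointness of $J_\Sigma$ (test $J_\Sigma h_0=E-c\nu_2$ against $\bar h$ and use $\int\nu_2\bar h=0$), since otherwise the claim ``which will be \eqref{cd}'' is unexplained.

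The one substantive gap is that you leave the non-degeneracy $D=\int_{\Sigma_0}\phi\,d\sigma\neq 0$ unverified, merely asserting that a Wronskian reduction against $\nu_3$ ``recasts $D$ as an explicit non-vanishing integral.'' This is precisely the content of the paper's Lemma \ref{lemma3}, which is not a formality: passing to isothermal coordinates, writing $\bar h=\nu_3\,\omega$, and integrating twice gives
$$
\bar h(t)=\nu_3(t)\int_0^t\frac{dz}{\nu_3(z)}\int_0^z x(s)^2\nu_3(s)\,ds,
$$
which is non-negative and not identically zero, whence $\int_{\Sigma_0}\bar h>0$. Without carrying out this computation (or an equivalent sign argument) the reduction is incomplete, since the sign of $D$ is exactly what makes $d$ well-defined. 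You should also justify briefly why $\phi$ is $\theta$-independent: averaging the equation over $\theta$ produces another admissible solution, and uniqueness of $L^{-1}$ forces $\phi$ to equal its angular average.
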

	
	\noindent For the proof, we consider first the more general problem
	\be
	J_\Sigma [h] =  E(y)  \inn \Sigma .
	\label{yy}\ee
	The following result holds. 
	\begin{lemma}\label{lemma2}
		Let $E  \in C^\alpha(\Sigma)$ be $T$-periodic in $y_3$ and such that 
		\be \int_{\Sigma_0} E \nu_j = 0 ,\quad j=1,2,3,\label{orto0}\ee
		where $\nu_j$ are the 3 components of the normal vector to $\Sigma$.
		Then problem \equ{yy} has a unique solution $h(y)$ which is $T$-periodic in $y_3$ and satisfies 
		\be\label{orto1} \int_{\Sigma_0} h \nu_j = 0 ,\quad j=1,2,3. \ee $h$ defines a linear operator $h = \mathcal H (E)$ that satisfies an the estimate of the form 
		\be\label{eli}
		\| \mathcal H (E)\|_{C^{2,\alpha}(\Sigma) } \le   C \| E  \|_{C^{\alpha}(\Sigma) }.
		\ee 
	\end{lemma}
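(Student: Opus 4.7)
My plan is to reduce the problem to the Fredholm alternative on the compact quotient surface $\hat\Sigma := \Sigma/(T\Z)$, obtained by identifying $y_3$ with $y_3+T$. Since $f$ is $T$-periodic, the metric $g$ and the second fundamental form $A$ of $\Sigma$ descend smoothly to $\hat\Sigma$, which is a compact surface without boundary (a smooth torus of revolution). Under this identification, $T$-periodic functions on $\Sigma$ correspond bijectively to functions on $\hat\Sigma$, and $\int_{\Sigma_0}(\cdot)\,d\sigma$ becomes integration over $\hat\Sigma$ with its surface measure. The hypothesis \eqref{orto0} thus says precisely that $E$ is $L^2(\hat\Sigma)$-orthogonal to the subspace $K := \mathrm{span}\{\nu_1,\nu_2,\nu_3\}$, and \eqref{orto1} is the analogous condition on $h$.

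The operator $J_\Sigma = \Delta_\Sigma + |A|^2$ extends to a formally self-adjoint second-order elliptic operator on $L^2(\hat\Sigma)$, with compact resolvent and hence discrete spectrum and finite-dimensional kernel. By Lemma \ref{lema1}, that kernel is exactly $K$. The Fredholm alternative therefore produces a unique solution $h \in K^\perp$ of $J_\Sigma h = E$ whenever $E \in K^\perp$, which is our situation. Pulled back to $\Sigma$, this $h$ is $T$-periodic and satisfies \eqref{orto1}, and the assignment $\mathcal H : E \mapsto h$ is linear.

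For the bound \eqref{eli}, the restricted inverse $\mathcal H : K^\perp \cap L^2(\hat\Sigma) \to K^\perp \cap H^2(\hat\Sigma)$ is bounded, so $\|h\|_{H^2(\hat\Sigma)} \le C\|E\|_{L^2(\hat\Sigma)}$. Since $\dim \hat\Sigma = 2$, the Sobolev embedding $H^2 \hookrightarrow C^0$ yields $\|h\|_{L^\infty} \le C\|h\|_{H^2} \le C\|E\|_{C^\alpha}$. Classical interior Schauder estimates applied in a finite coordinate atlas of $\hat\Sigma$, in which $J_\Sigma$ has $C^\alpha$ coefficients, then upgrade this to
\[
\|h\|_{C^{2,\alpha}(\hat\Sigma)} \le C\bigl(\|h\|_{L^\infty(\hat\Sigma)} + \|E\|_{C^\alpha(\hat\Sigma)}\bigr) \le C\|E\|_{C^\alpha(\hat\Sigma)},
\]
as required.

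The main obstacle is the identification of $\ker J_\Sigma$ on the compact torus $\hat\Sigma$ as exactly the three-dimensional space $K$; this is the content of Lemma \ref{lema1} (which the paper cites from \cite{Caldiroli Musso Iacopetti}). Once this spectral input is available, everything else is standard Fredholm and Schauder theory for a self-adjoint elliptic operator on a closed smooth surface, and the orthogonality conditions \eqref{orto0}--\eqref{orto1} play exactly the usual roles of solvability and normalization.
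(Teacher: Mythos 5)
Your proof is correct, and it takes a somewhat different (and slicker) route than the paper's. You pass to the compact quotient surface $\hat\Sigma = \Sigma/(T\Z)$ and invoke the standard self-adjoint Fredholm theory for $J_\Sigma = \Delta_\Sigma + |A|^2$ on a closed manifold: compact resolvent, discrete spectrum, $\ker J_\Sigma = K := \mathrm{span}\{\nu_1,\nu_2,\nu_3\}$ via Lemma \ref{lema1}, so $J_\Sigma$ restricts to a bijection from $K^\perp\cap H^2$ onto $K^\perp\cap L^2$, and Schauder bootstraps to $C^{2,\alpha}$. The paper instead avoids invoking the abstract self-adjoint Fredholm machinery directly: it rewrites $J_\Sigma h = E$ as $(I-\mathcal A)h = G$ where $\mathcal A = (1-\Delta_\Sigma)^{-1}(1+|A_\Sigma|^2)$ is compact and $G = (\Delta_\Sigma-1)^{-1}E$, and applies the Fredholm alternative for compact perturbations of the identity. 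Because this compact-operator reduction is not self-adjoint in the same inner product, the paper has to introduce the auxiliary functions $\xi_j = (1+|A_\Sigma|^2)\nu_j$ and work with the twisted orthogonality $\int_{\Sigma_0}\xi_j h = 0$ to make $\mathcal A$ preserve the constraint space, and then at the end add a suitable combination of the $\nu_j$ to recover the stated normalization $\int_{\Sigma_0}h\nu_j=0$. Your use of $L^2$-orthogonality to the kernel from the start eliminates that final correction step, at the cost of leaning on the general spectral theory rather than building the argument by hand. Both are valid; yours is tidier, the paper's is more elementary and self-contained.
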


	\proof 
	Let us consider the functions 
	$$\xi_j= (1- \Delta ) \nu_j = (1+|A_\Sigma|^2) \nu_j , \quad j=1,2,3$$  
	and consider the operator 
	$$
	\mathcal A(h)   =    (1- \Delta_\Sigma )^{-1} (h  + |A_\Sigma |^2h), \quad    
	$$
	defined on the space $M$ of functions $h\in C^\alpha (\Sigma)$ that are $T$-periodic in $y_3$  such that  \be\label{ort0}\int_{\Sigma_0} \xi_j h = 0 , \quad j=1,2,3.\ee $\mathcal A$ is a compact operator in the Banach space $M$ endowed with its natural norm.   $\mathcal A$ applies the space $M$ into itself. Indeed, for $h\in M$ we have 
	$$\begin{aligned}
		\int_{\Sigma_0}  \mathcal A(h)\xi_j = & \int_{\Sigma_0}   (1- \Delta_\Sigma )^{-1} (h  + |A_\Sigma|^2h)  
		(1- \Delta_\Sigma ) \nu_j \\
		= &\int_{\Sigma_0} (h  + |A_\Sigma |^2h)\nu_j =  \int_{\Sigma_0} h\xi_j =0.     
	\end{aligned}  
	$$ 
	Now, if  $ h\in \ker (I - \mathcal {A})  $ then  $ h\in C^{2,\alpha}(\Sigma)$ 
	solves $J_\Sigma [h] =0$, hence from Lemma \ref{lema1}, $h$ is a linear combination of the functions $\nu_j$. Conditions \equ{ort0} then imply $h=0$. Now, we observe that Equation \equ{yy} can be written as 
	$$
	(I- \mathcal A)(h)  = G , \quad G=  (\Delta_\Sigma - 1)^{-1}(E). 
	$$
	We observe that  $G\in M$. Indeed, 
	$$  
	\begin{aligned}
		\int_{\Sigma_0}  G\xi_j =&  \int_{\Sigma_0}  (\Delta_\Sigma - 1)^{-1}(E) (\Delta_\Sigma - 1)\nu_j\\ =& \int_{\Sigma_0} E\nu_j = 0 , \quad j=1,2,3 .
	\end{aligned} $$
	Then, Fredholm's alternative yields a unique solution $h\in M $ to this problem and hence of \equ{yy}. That solution defines a linear operator of $E$ that satisfies a bound of the form \equ{eli} from elliptic estimates. 
	Finally,   we  modify $h$ in the form $h_* =   h +\sum_{i=1}^3 \alpha_i \nu_j $ 
	where $\alpha_i$ solves the linear system 
	$$
	\sum_{i=1}^3 \alpha_i  \int_{\Sigma_0} \nu_i \nu_j     = - \int_{\Sigma_0} h\nu_j , \quad j=1,2,3.
	$$
	Then $h_*$ still solves \equ{yy} with conditions \equ{orto1} and satisfies an estimate of the form \equ{eli}. The proof is concluded. \qed

	\bigskip 
	Next, we will solve  the equation  
	\be\label{eq1} 
	J_\Sigma [ \bar h ]   = 1  \inn \Sigma .
	\ee
	By odd symmetries of the functions $\nu_j$, the function $E=1$ satisfies the orthogonality conditions \equ{orto0}
	and the solution $\bar h$ of \equ{eq1} predicted by Lemma \ref{lemma2} exists and is unique.  We need to describe this function in a more explicit form. 
	
	\begin{lemma}\label{lemma3}
		The unique solution of Problem \equ{eq1}   is $T$-periodic and even in $y_3$ and satisfies $$ \int_{\Sigma_0}  \bar h > 0$$      
	\end{lemma}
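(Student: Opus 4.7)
The plan is to (i) invoke Lemma \ref{lemma2} to produce $\bar h$, (ii) extract its symmetries from uniqueness, and (iii) establish positivity of $\int_{\Sigma_0}\bar h$ via a dilation identity.

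For (i), the hypotheses of Lemma \ref{lemma2} for $E\equiv 1$ reduce to checking $\int_{\Sigma_0}\nu_j=0$ for $j=1,2,3$: the cases $j=1,2$ vanish from the $\theta$-integrals of $\cos\theta,\sin\theta$, and $j=3$ reduces to $\int_{-T/2}^{T/2} ff'\,dy_3=\tfrac12[f^2]_{-T/2}^{T/2}=0$ by evenness and periodicity of $f$. For (ii), the equation $J_\Sigma[h]=1$ together with the orthogonality conditions is invariant under $y_3\mapsto y_3+T$, $y_3\mapsto -y_3$, and rotations about the $y_3$-axis; uniqueness thus forces $\bar h$ to be $T$-periodic, even in $y_3$, axisymmetric (so $\bar h=\bar h(y_3)$), and, combining periodicity and evenness, also symmetric about $y_3=T/2$, which yields $\partial_{y_3}\bar h(\pm T/2)=0$.

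The key to (iii) is the pointwise identity
\begin{equation*}
J_\Sigma\bigl[(y\cdot\nu)/2\bigr]\;=\;1\quad\text{on }\Sigma,
\end{equation*}
obtained from the dilation $\Sigma\mapsto t\Sigma$: since $H_{t\Sigma}=2/t$ and $t\Sigma$ is represented to first order as the normal graph over $\Sigma$ with height $(t-1)(y\cdot\nu)$, the expansion $H_{\Sigma_h}=H_\Sigma-J_\Sigma[h]+O(h^2)$ from Lemma \ref{prop32} yields $J_\Sigma[y\cdot\nu]=2$. Although $y\cdot\nu$ is not $T$-periodic (it acquires a summand $T\nu_3$ across a period), one uses it as a test function in Green's identity on $\Sigma_0$; combined with the divergence theorem applied to $\Omega_0$, whose end-caps at $y_3=\pm T/2$ are disks of radius $f(\pm T/2)=a$ so that $\int_{\Sigma_0}(y\cdot\nu)\,dS=3|\Omega_0|-\pi Ta^2$, one arrives at an identity of the schematic form
\begin{equation*}
\int_{\Sigma_0}\bar h\,dS\;=\;\tfrac12\bigl(3|\Omega_0|-\pi Ta^2\bigr)\;-\;\pi Ta\,f''(T/2)\,\bar h(T/2).
\end{equation*}
The leading term is strictly positive because $f>a$ on the open period forces $|\Omega_0|=\pi\int f^2>\pi Ta^2$.

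The main obstacle will be controlling the boundary correction proportional to $\bar h(T/2)$. The plan is to identify $\bar h$ with the first-order deformation of the Delaunay family $\{\Sigma(a')\}$ after period-preserving rescaling, which expresses $\int_{\Sigma_0}\bar h$ directly in terms of $V(a),V'(a),T(a),T'(a)$ and from which positivity can be read off using monotonicity properties of these one-variable functions. The cylinder limit $a\to 1/2^-$ (where $f''(T/2)\to 0$ eliminates the correction and $\bar h\equiv 1/4$) and the tangent-sphere limit $a\to 0^+$ (where $\bar h$ tends to the constant $1/2$ on each unit sphere) serve as consistency checks for the sign.
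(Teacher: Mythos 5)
Your treatment of existence and symmetries is fine and essentially matches what the paper implicitly invokes: Lemma~\ref{lemma2} applies since $\int_{\Sigma_0}\nu_j=0$ for $j=1,2,3$, and uniqueness of the orthogonally-constrained solution forces $T$-periodicity, evenness in $y_3$, and axisymmetry. The dilation identity $J_\Sigma[(y\cdot\nu)/2]=1$ and the Green's-identity computation are also correct as far as they go, and this is a genuinely different route from the paper's.

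However, there is a real gap in the positivity argument, and you concede it: the identity you derive reads
\begin{equation*}
\int_{\Sigma_0}\bar h \;=\; \tfrac12\bigl(3|\Omega_0|-\pi T a^2\bigr)\;-\;\pi a T\, f''(T/2)\,\bar h(T/2),
\end{equation*}
with $f''(T/2)>0$ at the neck, and the sign of $\bar h(T/2)$ is precisely what you don't yet know. Calling that term ``the main obstacle'' and sketching a plan to re-express $\int_{\Sigma_0}\bar h$ in terms of $V,V',T,T'$ via period-preserving deformation of the Delaunay family is not a proof; the monotonicity facts you would need are neither stated nor verified, and it is not clear the claimed identification of $\bar h$ with that deformation even holds modulo kernel elements without further work. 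The paper avoids all of this: passing to isothermal coordinates $(t,\theta)$ where $J_\Sigma=x(t)^{-2}(\partial_{\theta\theta}+\partial_{tt}+p(t))$, and seeking a $\theta$-independent solution, one uses the known Jacobi field $\nu_3(t)$ (odd, positive on $(0,\tau)$) as a homogeneous solution and reduction of order to write
\begin{equation*}
\bar h(t)\;=\;\nu_3(t)\int_0^t\frac{dz}{\nu_3(z)^2}\int_0^z x(s)^2\,\nu_3(s)\,ds ,
\end{equation*}
which is manifestly non-negative and strictly positive for $t\neq 0$, so $\int_{\Sigma_0}\bar h>0$ is immediate. This explicit construction also delivers the evenness and periodicity directly, without appealing to uniqueness. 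If you want to complete your route, you would need to actually pin down the sign of $\bar h(T/2)$ (e.g.\ by a maximum-principle argument or by carrying out the deformation computation), which is considerably more work than the paper's one-line conclusion from the explicit formula.
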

	
	\proof 
	Using the notation in \cite{Caldiroli Musso Iacopetti}, Lemma 2.6, see also \cite{kapouleas,Mazzeo Pacard,Caldiroli Musso}, the Jacobi operator in $\Sigma$ can be expressed in the simple form   
	$$
	J_\Sigma [h]  =    x(t)^{-2} (h_{\theta\theta} + h_{tt}   + p(t)h  )
	$$
	Here the parametrization of the Delaunay surface  in isothermal coordinates,
	$$
	y(t,\theta) = (x(t) \cos \theta, x(t) \sin \theta, z(t) ) ,
	$$
	where $z(t)$ is strictly increasing and $x(t)$ is positive, even 
	and periodic with a period $2\tau >0$. We assume $x(-\tau) = x(\tau) = a, \quad x(0) = 1-a $, so that also 
	$z(\pm  \tau ) = \pm \frac T2$. See 
	\cite{Caldiroli Musso Iacopetti} for the precise definitions. Of course we have 
	$x(t) =  f(z(t))$. $p(t)$ is an explicit even function. 
	We look for a solution to Problem 
	\equ{eq1} which is a function only of $t$,  $ h= \bar h(t)$. The equation on a periodic cell $(-\frac \tau 2 , \frac \tau 2) $ then reads  
	\be 
	h''   + p(t) h  = x(t)^2 \inn  (- \tau  ,  \tau )  
	\label {er}\ee
	and we are looking for an even solution. 
	In coordinates $(\theta, t)$  we see that $\nu_3$ does not depend on $\theta$, more explicitly, 
	$\nu_3(t) =   - \frac {f'(z(t)) }{ \sqrt{1+ f'(z(t) )^2 }}$. We see that $\nu_3(t)$ is an odd function, positive on 
	$(0, \tau  )$.  This function annihilates the Jacobi operator, and hence the linear operator in 
	\equ{er}.  We look for a solution in the form $h(t) =  \nu_3(t)\omega(t) $. Substituting we obtain 
	$$  \omega (\nu_3''  + p(t) \nu_3) +  \nu_3\omega'' + 2\omega' \nu_3' = x(t)^2  $$ so that 
	$$ 
	(\nu_3^2(t) \omega(t) )'  =    x(t)^2 \nu_3(t),  
	$$
	from where we deduce that a solution to \equ{er} in $(0,\tau  )$ is given by
	$$  \bar h(t) =    \nu_3(t)  \int_0^t    \frac {dz}{\nu_3(z)}   \int_0^z x(s)^2 \nu_3(s) ds  .   $$ 
	This function is regular at $t=0$ and extends as an even solution to \equ{er} to the entire interval $(-\tau ,  \tau )$ and to an even $2\tau$-periodic non-negative solution for all $t$. Hence $\bar h$ solves Problem \equ{eq1} in the entire $\Sigma$. $h$ is positive except at $t=0$. In particular, we  have
	$\int_{\Sigma_0} \bar h > 0$, as desired.\qed

	\subsection*{Proof of Proposition \ref{prop1} }
	
	We begin by identifying the unique candidates for the constants $c$ and $d$. 
	Testing Equation \equ{pp} against $\nu_2$  in the periodic cell $\Sigma_0$  we get 
	$$\begin{aligned}  \int_{\Sigma_0 }   J_\Sigma [h] \nu_2 = \int_{\Sigma_0}  h J_\Sigma [ \nu_2] = 0& =
		\int_{\Sigma_0} E \nu_2  + 
		c\int_{\Sigma_0}  \nu_2^2  + d\int_{\Sigma_0} \nu_2. 
	\end{aligned} 
	$$
	Testing against $\bar h$ in Lemma \ref{lemma3}, we get 
	\be \begin{aligned}
		\int_{\Sigma_0 }   J_\Sigma [h] \bar h    =   \int_{\Sigma_0}  h J_\Sigma [ \bar h]  = \int_{\Sigma_0}  h=0&  = 
		\int_{\Sigma_0} E \bar h -
		c\int_{\Sigma_0} \bar h \nu_2        - d \int_{\Sigma_0}  \bar h   . 
	\end{aligned} 
	\label{nn} \ee
	Since $ \int_{\Sigma_0} \bar h \nu_2 =0 = \int_{\Sigma_0} \nu_2 $  and  $\int_{\Sigma_0}  \bar h >0 $ we can explicitly solve  for the coefficients $c$ and $d$ 
	\be \label{cd}
	d=      \frac{ \int_{\Sigma_0} E \bar h } {\int_{\Sigma_0}  \bar h } , \quad   c=       \frac{ \int_{\Sigma_0} E \nu_2 } {\int_{\Sigma_0}  \nu_2^2 }.
	\ee
	We observe that the choices \equ{cd} yield that the function $\bar E = E - c\nu_2 -d $ satisfies 
	$
	\int_{\Sigma_0} \bar E \nu_2= 0 
	$.  The even symmetries  \equ{simh1} in $E$ and the odd ones for $\nu_1$ and $\nu_3$ yield 
	$$
	\int_{\Sigma_0} \bar E \nu_j= 0, \quad j=1,2,3.  
	$$
	Thus, with these choices of $d$ and $c$, Lemma \ref{lemma3} yields the existence of a unique solution of \equ{pp}
	$T$-periodic in $y_3$ that satisfies conditions \equ{orto1}.  Testing the equation once more against $\bar h$, the computation \equ{nn} now yields    
	$\int_{\Sigma_0} h = 0.  
	$ Uniqueness implies evenness in the variable $y_3$. 
	Hence  with the choices \equ{cd} $h$ is $T$-periodic and even in $y_3$ and solves 
	$$\left\{
	\begin{aligned}
		J_\Sigma [h] &= E(y)  -c\nu_2   - d   \inn \Sigma,\\
		\int_{\Sigma_0} h  &= 0= \int_{\Sigma_0}  h\nu_2 .
	\end{aligned}\right.
	$$ 
	The elliptic estimate \equ{eli}  completes the proof. \qed

	\section{Resolution of the full problem}
	
	We recall that to solve Problem \eqref{3} we just need to find a function 
	$h: \Sigma \to \R$ and constants $\gamma$, $\lambda$ such that equation \eqref{newP} is satisfied, namely
	$$
	\begin{aligned}
		J_{\Sigma^n} [h]   &= E_\gamma (y) 
		+ \, \gamma \, \left( {2  \over T} \, \ln n \, + b(y) \, \right) \,\,\int_{\Sigma_0  }   h  
		+ \, \gamma \, \ell_1 [h] (y) \\
		&+ n^{-1} \, \ell [h, D h, D^2 h] (y)
		+ q[h, D h , D^2 h] (y) + \lambda ,
	\end{aligned}
	$$
	where 
	$$ \begin{aligned}
		E_\gamma(y) &= \gamma \,  F_0^n (y_{3} ) + \frac {\gamma}n F_1^n (y)   +
		{  y_2 \over  n } [ F(y_3) - \gamma \, {\ln n }  \,   \, {2\pi |\Omega_0 | \over  T^2}] .  \end{aligned}
	$$
	$F(y_3)$ and the rest of the terms are defined  in formula \equ{newP}.  
	
	Instead of solving the problem directly, we formulate the projected problem 
	\be\label{p1}
	\begin{aligned}
		J_{\Sigma^n} [h]   &= E_\gamma (y) 
		+ \, \gamma \, \left( {2  \over T} \, \ln n \, + b(y) \, \right) \,\,\int_{\Sigma_0  }   h  
		+ \, \gamma \, \ell_1 [h] (y) \\
		&+ n^{-1} \, \ell [h, D h, D^2 h] (y)
		+ q[h, D h , D^2 h] (y)  + d  + c\nu_2 
	\end{aligned}
	\ee
	and we look for a solution $h$ that satisfies $\int_{\Sigma_0} h = 0$ and  we additionally require $c=0$.

	We formulate this problem using the operator $\mathcal T $ in Proposition \ref{prop1} in the fixed point form 
	\be\label{p2}
	\begin{aligned}
		h &=   {\mathcal  E } (h, \gamma)  \\
		{\mathcal  E } (h, \gamma) &:=\mathcal T (  E_\gamma    
		+ \, \gamma \, \ell_1 [h] 
		+ n^{-1} \, \ell [h, D h, D^2 h] 
		+ q[h, D h , D^2 h] )
	\end{aligned}
	\ee 
	coupled with the relation $c=0$ which becomes 
	\be\label{p3} c(h,\gamma)=   \frac 1{ \int_\Sigma \nu_2^2  } \int_\Sigma (\, E_\gamma    
	+ \, \gamma \, \ell_1 [h]  \\
	+ n^{-1} \, \ell [h, D h, D^2 h] 
	+ q[h, D h , D^2 h] ) \nu_2  \ =\ 0.
	\ee
	The constant $d$ in \equ{p1} is given by 
	$$
	d=   \frac 1{ \int_\Sigma \bar h } \int_\Sigma (\,  E_\gamma +  \gamma \, \ell_1[h] 
	+ n^{-1} \, \ell (h, D h, D^2 h)
	+ q(h, D h , D^2 h) ) \bar h  
	$$
	where $\bar h$ is given by Lemma \ref{lemma3}.
	
	\medskip
	
	The relation \eqref{p3}   will  be  at main order the equation 
	$$
	\int_{\Sigma_0} E_\gamma \nu_2 = 0 .
	$$
	Let us compute the left-hand side of this equation. We will denote by $c_1,c_2, ...$ different positive constants arising in the computation.

	\medskip
	Using the parametrization of the Delaunay surface introduced in  \S \ref{sec3}, we have $\nu_2 = {\sin \theta \over \sqrt{1+ (f')^2}}$ and  $d\sigma = \sqrt{{\det} g} \, dy_3 \, d\theta $ (see \eqref{g}) and hence
	$$
	c_1 := \int_{\Sigma_0} \nu_2^2 = (\int_0^{2\pi} \sin^2 \theta \, d \theta ) \int_{-{T \over 2}}^{T\over 2} {f \over \sqrt{1+ (f')^2 } } \, dy_3 .$$
	Using Propositions \ref{lemma1} and \ref{expN0} we find
	\begin{align*}
		\int_{\Sigma_0} E \, \nu_2 &= {1\over R} \int_0^{2\pi} \int_{-{T \over 2}}^{T\over 2} f \sin^2 \theta  \, \Biggl(    { (2-(f')^2 ) \, f \, f'' \over (1+ (f')^2 )^{5\over 2}} +{1 + 3 (f')^2 \over (1+ (f')^2 )^{3\over 2}} \Biggl)  \, dy_3 \, d\theta \\
		&-\gamma \, {\ln n \over n}  \,  
		{2\pi |\Omega_0| \over  T^2}  \int_0^{2\pi} \int_{-{T \over 2}}^{T\over 2} f^2 \sin^2 \theta \, d y_3 \, d\theta + O ({\gamma \over n})\\
		&= {2\pi \over nT} \, (\int_0^{2\pi} \sin^2 \theta \, d \theta )\,  \left( 2 \, I_a - \gamma \, \ln n \, {|\Omega_0 |\over T} + O(\gamma ) \right) 
	\end{align*}
	where $I_a$ is the integral defined in \eqref{defIa}. Setting 
	$$ \begin{aligned} c_2 = &{2\pi \over T} \, (\int_0^{2\pi} \sin^2 \theta \, d \theta )\, \\ 
		c_3 =&  {|\Omega_0 |\over T} ,\\ 
		c_4 =& {2\pi \over T}  \, \left( \int_{-{T \over 2}}^{T\over 2} {f \over \sqrt{1+ (f')^2 } } \, dy_3  \right)^{-1}  \,
	\end{aligned}$$
	we find 
	\be \label{pp3}
	\begin{aligned}
		{ \int_{\Sigma_0} E_\gamma \, \nu_2  \over \int_{\Sigma_0} \nu_2^2 }&=  \frac {c_4} n  \left( 2 \, I_a -  c_3 \gamma \, \ln n \,  + O(\gamma ) \right)  
	\end{aligned}\ee
	and hence at main order 
	$$ 
	\gamma \approx   \gamma_n := \frac {c_5} {\log n }, \quad  c_5= \frac 2{c_3} I_a.
	$$
	The value of $\gamma_n$ will indeed be a positive number 
	if $I_a>0$, which is precisely what we are assuming. Now, 
	we assume  that we are working in a region of positive scalars $\gamma$  with 
	\be\label{gama} 
	|\gamma -\gamma_n| \le \frac M {\log^2 n}
	\ee
	for a fixed and adequately large number $M$.
	
	\medskip
	Our problem is thus reduced to finding a solution 
	$(h, \gamma)$ in system \eqref{p2}-\eqref{p3}.
	The principle is simple. We look for a solution $h$ whose size is of the order of that of $E_\gamma$ which turns out to be  $O(\gamma)$. The contribution of the terms $h$-dependent of the right-hand side will then be $o(\gamma)$. It is convenient to solve first a "piece" of equation \equ{p2}, 
	$$
	h_0 = \mathcal T ( \gamma F_0^n (y_3)  + \gamma \, \ell_1[h_0] 
	+ n^{-1} \, \ell (h_0, D h_0, D^2 h_0)  + q(h_0,Dh_0,D^2 h_0)) .
	$$
	This problem can be embedded in the space of functions $h$ that in addition to the symmetry and periodicity conditions \equ{simh1} are even  in the $y_2$-variable:  $h(y_1,-y_2,y_3)= h(y_1,y_2,y_3)$.
	Thanks to \eqref{chepalle}-\eqref{chepalle1}-\eqref{chepalle2}, we apply the contraction mapping principle which yields that this problem has a unique small solution $h_0$, with 
	\be \label{hache} \int_{\Sigma_0} h_0 =0, \quad {\mbox {and}} \quad \| h_0 \|_{C^{2, \alpha} (\Sigma_0)} \leq {C \over \log n}.
	\ee
	The reason being that the  terms $\gamma F_0^n (y_3)  + \gamma \, \ell_1[h] 
	+ q(h,Dh,D^2 h))$
	satisfy conditions \equ{simh1} and evenness in $y_2$ if $h$ does. 
	
	Now let us decompose $h=h_0+ h_1$. The problem for $h_1$ then becomes 
	\be \label{p4} \begin{aligned}
		h_1= &\mathcal T \Big ( E_\gamma^1  + \vartheta_1(h_1) + \vartheta_2(h_1)    \Big ) 
	\end{aligned}
	\ee
	where
	$$\begin{aligned}
		E^1_\gamma(y) =& \frac {\gamma}n F_1^n (y) +
		{  y_2 \over  n } [ F(y_3) - \gamma \, {\ln n }  \,   \, {2\pi |\Omega_0 | \over  T^2}] , \\ \vartheta_1 (h_1) &= n^{-1} \,  \ell (h_0+h_1, D (h_0+h_1), D^2 (h_0 + h_1)) \\
		\vartheta_2 (h_1)  &=  \gamma \ell_1 [h_0+ h_1]  -  \gamma \ell_1 [h_0] \\  &+  q(h_0+h_1 ,D(h_0 + h_1),D^2 (h_0 + h_1))\, - \,  q(h_0,Dh_0,D^2 h_0) .
	\end{aligned}$$ 
	The following properties can be immediately checked 
	$$\begin{aligned}
		\|E_\gamma^1\|_{C^\alpha (\Sigma_0)}  \le&  \frac  C n , \\ \|\vartheta_1 (h_1)\|_{C^\alpha (\Sigma_0)} \le& \frac Cn (\gamma + \|h_1\|_{C^{2,\alpha} (\Sigma_0)} )    , \\  \|\vartheta_2 (h_1)\|_{C^\alpha (\Sigma_0)} \le&  C[ \gamma  \|h_1\|_{C^{2,\alpha}}  + \|h_1\|^2_{C^{2,\alpha} (\Sigma_0)} ]
	\end{aligned}
	$$
	(see \eqref{chepalle}, \eqref{chepalle1}, \eqref{chepalle2}).
	Let us consider the equation \equ{p4} for $h_1$ in the region of functions $h_1$ with  $ \int_{\Sigma_0} h_1 =0$ and $\|h_1\|_{C^2_\alpha (\Sigma_0)}  \le \frac D n $ for some suitably large constant $D$. We check that the operator in the right-hand side of \equ{p4} applies this region into itself, and in addition, it is a contraction mapping. 
	We conclude that \equ{p4} has a unique solution with
	\be \label{hache1} \int_{\Sigma_0} h_1 =0, \quad {\mbox {and}} \quad \| h_1 \|_{C^{2, \alpha} (\Sigma_0)} \leq {C \over  n}.
	\ee

	We now have $ h= h_0 + h_1 $ defines an operator  $ h= h(\gamma)$.  Substituting this into the  equation \equ{p3}  we have reduced the full problem to 
	
	\be \label{pp1}
	\int_{\Sigma_0}  ( E^1_\gamma + \vartheta_1(h_1)  +  \vartheta_2(h_1))\nu_2 = 0 . 
	\ee
	Here we have used that because of the even symmetry in $y_2$ we have that 
	$$
	\int_{\Sigma_0}   (\gamma F_0^n (y_3)  + \gamma \, \ell_1[h_0] 
	+ n^{-1} \, \ell (h_0, D h_0, D^2 h_0)  + q(h_0,Dh_0,D^2 h_0)) \nu_2 = 0. 
	$$
	Then the full problem is reduced, after substitution into \equ{pp1} to finding $\gamma$ such that a relation of the following form holds
	$$ 
	\int_{\Sigma_0} (E_\gamma^1 + O(\frac \gamma n) )  \nu_2 = c_2
	(2 I_a- c_3 \gamma \log n  + o(1) ) 
	$$
	where the smaller order terms satisfy also small Lipschitz conditions.
	The choice of $\gamma $ such that this quantity equals zero immediately follows. The proof is concluded.  \qed

	\section{Appendix}\label{appe}
	
	\begin{proof}[Proof of Lemma \ref{LemmaIa}]

		It is straightforward to check that  $I_a >0$ for any  ${1\over 4} \leq a < {1\over 2}$. Since $f$ satisfies \eqref{eqf}, one can show that the following quantity 
		is conserved: 
		\be \label{conservedquantity}
		f^2  - \frac f{ \sqrt {1+ f'(s)^2}}  =  - a(1-a) .
		\ee
		An algebraic manipulation using \eqref{eqf}  and \eqref{conservedquantity} gives that
		$$
		\begin{aligned}
			I_a &= \int_0^{T\over 2} {f_* (s) \over 1+ (f'(s))^2} \, ds, \quad {\mbox {where}} \\
			f_*(t) &= f^2 \, (-1 + 4 (f')^2 ) + \, a \, (1-a) \, (3 + 2 (f')^2) .
		\end{aligned}
		$$
		Since $f(s)^2 \leq (1-a)^2$ for all $s \in [0,{T\over 2}]$ and $a \in [{1\over 4}, {1\over 2} ]$, we readily get
		$$
		f_*(s) > - f(s)^2 + 3a (1-a) \geq 0. \quad $$

		\medskip
		Let us now consider $a$ close to $0$.

		\medskip
		Let us consider the change of variable in the integral \eqref{Ia} given by
		$$
		s= z(t).
		$$
		Using the notation in \cite{Caldiroli Musso, Caldiroli Musso Iacopetti}, $s=z(t)$ is the unique diffeomorphism such that $z(0)=0$, $z' >0$ and
		$$
		y(t,\theta) = (x(t) \cos \theta, x(t) \sin \theta, z(t) ) , \quad x(t)= f(z(t))
		$$
		is a conformal parametrization of the unduloid $\Sigma$, i.e.
		$$
		\pp_t y \cdot \pp_\theta y = 0= |\pp_t y|^2 - |\pp_\theta y |^2.
		$$
		This relation is equivalent to
		\be \label{x}
		x^2= (x')^2 + (z')^2.
		\ee
		The functions $x=x(t)$ and $z=z(t)$ satisfy
		\be \label{eqx-z}
		\begin{aligned}
			x'' &= (1-2 a (1-a)) x - 2 x, \quad x(0)=1-a , \quad x'(0)=0 \\
			z'&= a (1-a) + x^2, \quad z(0) =0.
		\end{aligned}
		\ee
		The function $x$ is $2\tau$-periodic, $x (t+2\tau ) = x(t)$ for all $t$, 
		$x(-\tau) = x(\tau) = a$ and
		$z(\pm  \tau ) = \pm \frac T2$. 
		We refer to	\cite{Caldiroli Musso, Caldiroli Musso Iacopetti, Mazzeo Pacard}
		for the derivation of these facts.	
		
		\medskip
		Using \eqref{x}, we compute
		$$
		ds = z'(t) dt, \quad {df \over ds} = {z' \over x'}, \quad 1+ ({df \over ds})^2 = {x^2 + \left( a (1-a) + x^2 \right)^2}
		$$	
		and hence
		\be\label{Ia1}
		\begin{aligned}
			I_a &= \int_0^{\tau } A_a (t) B_a (t) \, dt \\
			A_a (t)&=  a (1-a) + x^2  \\
			B_a (t) &= - z' + 4 (x')^2 + {a (1-a) \over x^2} \, ( 3 (z')^2 + 2 (x')^2 ) 
		\end{aligned}
		\ee
		We claim that there exists $C>0$ such that for all $a >0$ small enough
		\be \label{xa}
		x(t) = {\rm sech } (t)\,  \left( 1 + a (-1 + t \tanh t) \right) + a^2\,  \varphi_a (t)
		\ee
		where
		\be \label{xa1}
		\| \varphi_a \|_{C^2_{\rm loc} (\R )} \leq C.
		\ee
		For the moment we assume the validity of \eqref{xa}-\eqref{xa1} and proceed with our argument to prove that $I_a >0$ for all $a>0$ small. 
		Using the notation introduced in \eqref{Ia1}, we differentiate $I_a$ with respect to $a$
		\begin{align*}
			\partial_a I_a = \int_0^\tau \left( \partial_a A_a \, B_a + A_a \, \partial_a B_a \right) \, dt + (A_a B_a) (\tau) \, \left( \partial_a \tau \right). 
		\end{align*}
		In Lemma 2.2 in \cite{Caldiroli Musso} it is proved that $\tau = -2 \log a + \tilde \tau (a)$ with $\tilde \tau (a)$ a smooth function which is uniformly bounded with its derivative as $a \to 0$. This fact, in combination with \eqref{xa}-\eqref{xa1}, give that
		$$
		(A_a B_a) (\tau) \, \left( \partial_a \tau \right) = O(a) \quad {\mbox {as}} \quad a \to 0.
		$$
		Let 
		\be \label{defx0}
		x_0 (t) = \sech t, \quad \varphi (t) = \sech t \, (-1 + t \tanh t).
		\ee
		Using \eqref{x} and \eqref{eqx-z}, we get
		\begin{align*}
			(\pp_a A_a )_{a=0} &= 1+ 2 x_0 \varphi \\
			(\pp_a B_a )_{a=0} &=- 9 x_0^2 +2 + 8 x_0 \varphi -20 x_0^3 \varphi .
		\end{align*}
		Since $I_0=0$, $A_0= x_0^2$ and $B_0 =   4 x_0^2 -5 x_0^4 $,  Taylor expanding $I_a$ in  
		\eqref{Ia1} we obtain
		\begin{align*}
			I_a &=  a \, \int_0^{\infty} \left( 1+ 2 x_0 \varphi \right) \left(  4 x_0^2 -5 x_0^4 \right) + x_0^2 \left( - 9 x_0^2 +2 + 8 x_0 \varphi -20 x_0^3 \varphi\right) \, dt \\
			&+ O(a^2) \quad {\mbox {as}} \quad a \to 0.
		\end{align*}
		We use that
		\begin{align*}
			\int \sech^2 (t) \, dt &= \tanh t + c, \quad \\
			\int \sech^4 (t) \, dt &= {1\over 3} \, (2 + \cosh(2 t)) \, \sech^2(t) \, \tanh(t)+c , \quad \\ \int \sech^6 (t) \, dt &={1\over 15}  \left(8 + 6 \cosh (2t) + \cosh (4t) \right)) \, \sech^4 (t) \, \tanh (t) +c \\
			\int \sech^4 (t) \,  t \, \tanh t  \, dt &= 
			{1\over 48}  (-12 t + 4 \sinh (2 t) + \sinh (4 t)) \sech^4(t) + c\\
			\int \sech^6 (t) \, t \, \tanh t  \, dt &= 
			{1\over 360}  (-60 t + 15 \sinh (2 t) + 6 \sinh (4 t) + \sinh (6 t)) \,  \sech^6 (t) + c
		\end{align*}
		to get
		\begin{align*}
			\int_0^\infty x_0^2 (t) \, dt &=1, \quad \int_0^\infty x_0^4 (t) \, dt = {2\over 3} , \quad \int_0^\infty x_0^6 (t) \, dt = {8 \over 15}\\
			\int_0^\infty x_0^4 (t) \,  t \, \tanh t  \, dt &= {1\over 6}, \quad  \int_0^\infty x_0^6 (t) \, t \, \tanh t  \, dt = {4 \over 45}.
		\end{align*}
		This gives
		\begin{align*}
			\int_0^{\infty} & \left( 1+ 2 x_0 \varphi \right) \left(  4 x_0^2 -5 x_0^4 \right) + x_0^2 \left( - 9 x_0^2 +2 + 8 x_0 \varphi -20 x_0^3 \varphi\right) \, dt\\
			&= \int_0^\infty \left( -30 \,  x_0^4 + 6 \, x_0^2 + 30 \, x_0^6 + 16 \, x_0^4 \, t \, \tanh t - 30 \, x_0^6 \, t \,\tanh t  \right) \, dt \\
			&= 2  >0 
		\end{align*}
		and hence $I_a >0$ for all $a>0$ small.
		
		\medskip
		We now prove \eqref{xa}-\eqref{xa1}. Observe that the function $x_0$ in \eqref{defx0} solves
		$$
		x''= x - 2 x^3, \quad {\mbox {in}} \quad \R, \quad x (0)=1, \quad x'(0)=0.
		$$
		The proof of \eqref{xa}-\eqref{xa1} is based on a formal linearization of \eqref{eqx-z} in $a$, at $a=0$. Hence we  
		consider the linearized operator around $x_0$
		$$
		L_0 (\varphi ) = \varphi'' - \varphi + 6 x_0^2 \varphi.
		$$
		A direct computation gives that the function $\varphi$ defined in \eqref{defx0} solves
		$$
		L_0 (\varphi) =-2 x_0 , \quad \varphi (0)=-1, \quad \varphi' (0)=0.
		$$
		Replacing the expression for $x$ in \eqref{xa} in \eqref{eqx-z}, we get that $\varphi_a$ solves
		\be\label{eqvarphia}
		\begin{aligned}
			L_0 (\varphi_a) &=2 a^2 x_0 - 2a^2 (1-a) \varphi - 6 x_0 a^2 \varphi^2 - 2 a^3 \varphi^3 + N (\varphi_a) \\
			\varphi_a (0)&=0, \quad \varphi_a' (0) = 0
		\end{aligned}
		\ee
		where
		$$
		N(\varphi_a )= 2 a (1-a) \varphi_a - 6x_0 [ (a \varphi + \varphi_a)^2 - a^2 \varphi^2] -2 [ (a\varphi  + \varphi_a)^3 - a^3 \varphi^3].
		$$
		The function $x_0' (t) = \tanh (t) \, \sech (t)$ solves $L_0 (x_0' ) =0$. Using the variation of parameter formula we check that, for a bounded function $h$, 
		\be \label{solode}
		\psi  (t) = \Theta (h) (t) := x_0' (t) \int_0^t {ds \over (x_0')^2 (s) } \int_0^s h(\eta ) x_0' (\eta ) \, d\eta
		\ee
		is the only solution to
		$$
		L_0 (\psi) = h, \quad \psi(0) =0, \quad \psi' (0)=0.
		$$
		Besides there exists a constant $C>0$ such that
		\be \label{apriori}
		\| \Theta (h) \|_{C^{2, \alpha}_{\rm loc} (\R)} \leq C \| h \|_{C^{0, \alpha}_{\rm loc} (\R)}.
		\ee
		In terms of the operator $\Theta$ the solution $\varphi_a$ to \eqref{eqvarphia} is a fixed point for the problem
		$$
		\varphi_a = {\mathcal A} (\varphi_a) := \Theta \left( 2 a^2 x_0 - 2a^2 (1-a) \varphi - 6 x_0 a^2 \varphi^2 - 2 a^3 \varphi^3 + N (\varphi_a) \right).
		$$
		Using Contraction Mapping, we show that the above problem has a unique fixed point in the set
		$$
		{\mathcal B} = \{ \psi \in C^{2, \alpha}_{\rm loc} (\R) \, : \, \| \psi \|_{C^{2, \alpha}_{\rm loc} (\R)} \leq r a^2\}.
		$$	
		For any $\psi \in {\mathcal B}$, a direct use of the explicit formula \eqref{solode} gives
		\begin{align*}
			\| {\mathcal A} (\psi ) \|_{C^{2, \alpha}_{\rm loc} (\R)} & \leq C \| 2 a^2 x_0 - 2a^2 (1-a) \varphi - 6 x_0 a^2 \varphi^2 - 2 a^3 \varphi^3 + N (\psi) \|_{C^{0, \alpha}_{\rm loc} (\R)} \\
			&\leq C \| 2 a^2 x_0 - 2a^2 (1-a) \varphi - 6 x_0 a^2 \varphi^2 - 2 a^3 \varphi^3  \|_{C^{0, \alpha}_{\rm loc} (\R)}  \\
			&+ C \| N (\psi) \|_{C^{0, \alpha}_{\rm loc} (\R)} \leq c_1 a^2,
		\end{align*}
		for some $c_1$. Taking $r>c_1$, this estimate shows that the map ${\mathcal A}$ send ${\mathcal B}$ into itself.
		
		Besides, for any $\psi_1$, $\psi_2 \in {\mathcal B}$, we see that
		\begin{align*}
			\| {\mathcal A} (\psi_1 ) - {\mathcal A} (\psi_2 )  \|_{C^{2, \alpha}_{\rm loc} (\R)} & \leq C \|  N (\psi_1 ) - N(\psi_2)  \|_{C^{0, \alpha}_{\rm loc} (\R)} \\
			&\leq C a (1-a) \| \psi_1 - \psi_2   \|_{C^{0, \alpha}_{\rm loc} (\R)}  \\
			&+ C \| x_0 a \varphi (\psi_1  - \psi_2) \|_{C^{0, \alpha}_{\rm loc} (\R)}   + C \| \psi_1^2 - \psi_2^2 \|_{C^{0, \alpha}_{\rm loc} (\R)} \\
			&\leq  C a \| \psi_1 - \psi_2   \|_{C^{0, \alpha}_{\rm loc} (\R)} \\
			& +  C \| \psi_1^2 + \psi_2^2 \|_{C^{0, \alpha}_{\rm loc} (\R)}
			\| \psi_1 - \psi_2 \|_{C^{0, \alpha}_{\rm loc} (\R)}
		\end{align*}
		for a constant $C$ independent of $a$, whose value changes from line to line, and within the same line. This gives that for any sufficiently $a>0$, ${\mathcal A}$ defines a contraction mapping of the set ${\mathcal B}$. This concludes the proof of \eqref{xa}-\eqref{xa1}.

	\end{proof}
	
	\subsection*{Acknowledgments}
	M.~del Pino has been supported by the Royal Society Research Professorship grant RP-R1-180114 and by  the  ERC/UKRI Horizon Europe grant  ASYMEVOL, EP/Z000394/1.
	M.~Musso has been supported by EPSRC research Grant EP/T008458/1. ANID Chile has supported A. Zúñiga under grant FONDECYT de Iniciaci\'on en Investigaci\'on  11201259.


\begin{thebibliography}{AAA}
		
		
		
		
		\bibitem{zuniga}  S. Alama, L. Bronsard, I. Topaloglu, A. Zuniga,  {\em A nonlocal isoperimetric problem with density perimeter.} Calc. Var. Partial Differential Equations 60 (2021), no. 1, Paper No. 1, 27 pp.
		
		\bibitem{bohr}
		N. Bohr,  {\em Neutron capture and nuclear constitution} Nature 137, 344–348 (1936).
		
		\bibitem{Bohr Wheeler} N. Bohr,  J. A. X  Wheeler, {\em The mechanism of nuclear fission.} Physical Review, 56(5):426 (1939). 
		
		\bibitem{Bonacini-Cristofri} M. Bonacini, R. Cristoferi,  {\em Local and global minimality results for a nonlocal isoperimetric problem on $R^N$.} SIAM J. Math. Anal. 46 (2014), no. 4, 2310–2349.
		
		
		\bibitem{Caldiroli Musso}
		P. Caldiroli, M. Musso,  {\em Embedded tori with prescribed mean curvature.} Adv. Math. 340 (2018), 406–458.
		
		\bibitem{Caldiroli Musso Iacopetti}
		P. Caldiroli, A. Iacopetti, M. Musso,  {\em On the non-existence of compact surfaces of genus one with prescribed, almost constant mean curvature, close to the singular limit.} Adv. Differential Equations 27 (2022), no. 3-4, 193–252. 
		
		
		\bibitem{Choksi Peletier}
		R. Choksi, M.A. Peletier, {\em 
			Small volume-fraction limit of the diblock copolymer problem: II. Diffuse-interface functional.} 
		SIAM J. Math. Anal. 43 (2011), no. 2, 739–763.
		
		\bibitem{choksi}
		R. Choksi, C. Muratov, I. Topaloglu, {\em An old problem resurfaces nonlocally: Gamow's liquid drops inspire today's research and applications.} Notices Amer. Math. Soc. 64 (2017), no. 11, 1275–1283.
		
		\bibitem{DeGiorgi}  E. De Giorgi, {\em Sulla proprietá isoperimetrica dell’ipersfera, nella classe degli insiemi
			aventi frontiera orientata di misura finita.} Atti Accad. Naz. Lincei. Mem. Cl. Sci. Fis.
		Mat. Nat. Sez. I (8) (1958), 33–44.
		
		\bibitem{figalli}   A. Figalli, N. Fusco, F. Maggi, V. Millot, M. Morini,  
		{\em Isoperimetry and stability properties of balls with respect to nonlocal energies.} 
		Comm. Math. Phys. 336 (2015), no. 1, 441–507.
		
		\bibitem{frank3} 
		L. Emmert, R. Frank, T. König,  
		{\em Liquid drop model for nuclear matter in the dilute limit.}
		SIAM J. Math. Anal. 52 (2020), no. 2, 1980–1999.
		
		\bibitem{frank1}
		R. Frank, R. Killip, P.T. Nam,  {\em Nonexistence of large nuclei in the liquid drop model.} Lett. Math. Phys. 106 (2016), no. 8, 1033–1036. 
		
		\bibitem{frank2} 
		R. Frank, P.T. Nam,
		{\em Existence and nonexistence in the liquid drop model}
		Preprint arXiv:2101.02163 
		
		
		
		\bibitem{frank4} 
		R. Frank,  {\em Non-spherical equilibrium shapes in the liquid drop model. } J. Math. Phys. 60 (2019), no. 7, 071506.
		
		\bibitem{Gamow}  G. Gamow, {\em Mass defect curve and nuclear constitution,} Proc. R. Soc. Lond. A 126 (1930), no. 803, pp. 632–644.
		
		\bibitem{julin} 
		V. Julin, {\em Isoperimetric problem with a Coulomb repulsive term.} Indiana Univ. Math. J. 63 (2014), no. 1, 77–89
		
		\bibitem{kapouleas}
		N. Kapouleas,  {\em Complete constant mean curvature surfaces in Euclidean three-space.} Ann. of Math. (2) 131 (1990), no. 2, 239–330. 
		
		
		
		\bibitem{Knupfer-Muratov}
		H. Knüpfer,  C. Muratov, 
		{\em On an isoperimetric problem with a competing nonlocal term II: The general case.} (English summary)
		Comm. Pure Appl. Math. 67 (2014), no. 12, 1974–1994.
		
		\bibitem{Lieb-Loss}   E. Lieb, M. Loss,   {\em Analysis.} Graduate Studies in Mathematics, 14. American Mathematical Society, Providence, RI, 1997. 
		
		\bibitem{maggi}
		F. Maggi, {\em Sets of finite perimeter and geometric variational problems.} An introduction to geometric measure theory. Cambridge Studies in Advanced Mathematics, 135. Cambridge University Press, Cambridge, 2012.
		
		\bibitem{Lu Otto}
		J. Lu, F. Otto, {\em Nonexistence of a minimizer for Thomas-Fermi-Dirac-von Weizsäcker model.} Comm. Pure Appl. Math. 67 (2014), no. 10, 1605–1617.
		
		\bibitem{Mazzeo Pacard}
		R. Mazzeo, F. Pacard, 
		{\em Constant mean curvature surfaces with Delaunay ends.}
		Comm. Anal. Geom. 9 (2001), no. 1, 169–237.
		
		\bibitem{pressley} 
		A. Pressley,  {\em Elementary differential geometry.} Springer Undergraduate Mathematics Series. Springer-Verlag London, Ltd., London, 2010.
		
		
		\bibitem{RenWei0}
		X. Ren, J. Wei,   {\em 
			Existence and stability of spherically layered solutions of the diblock copolymer equation}. 
		SIAM J. Appl. Math. 66 (2006), no. 3, 1080–-1099.
		
		\bibitem{RenWei1} 
		X. Ren, J. Wei,   {\em 
			A toroidal tube solution to a problem involving mean curvature and Newtonian potential.} 
		Interfaces Free Bound. 13 (2011), no. 1, 127–154.
		
		\bibitem{RenWei2} 
		X. Ren, J. Wei, {\em Double tori solution to an equation of mean curvature and Newtonian potential.} Calc. Var. Partial Differential Equations 49 (2014), no. 3-4, 987–1018.
		
		
		
		\bibitem{Riesz} 
		F. Riesz, Sur une in´egalit´e int´egrale. J. London Math. Soc. 5 (1930), 162–168.
		
		
		
		
		\bibitem{Xu-Du} 
		Z. Xu, Q. Du {\em 
			Bifurcation and fission in the liquid drop model: a phase-field approach.} 
		Preprint arXiv:2302.14449 
		
	\end{thebibliography}
\end{document}